\newcommand{\lyxaddress}[1]{
\par {\raggedright #1
\vspace{1.4em}
\noindent\par}
}
\newtheorem{theorem}{Theorem}
\newtheorem{proposition}[theorem]{Proposition}
\newtheorem{lemma}[theorem]{Lemma}
\newtheorem{corollary}[theorem]{Corollary}
\theoremstyle{remark}
\newtheorem{remark}[theorem]{Remark}
\newtheorem{example}[theorem]{Example}
\newtheorem*{question*}{QUESTION}
\newtheorem{definition}[theorem]{Definition}
\newcommand{\diag}{\mathop\mathrm{diag}\nolimits}
\newcommand{\Dom}{\mathop\mathrm{Dom}\nolimits}
\newcommand{\der}{\mathop\mathrm{der}\nolimits}
\newcommand{\Ran}{\mathop\mathrm{Ran}\nolimits}
\newcommand{\spec}{\mathop\mathrm{spec}\nolimits}
\renewcommand{\Re}{\mathop\mathrm{Re}\nolimits}
\newcommand{\dist}{\mathop\mathrm{dist}\nolimits}
\begin{document}

\title{The characteristic function for Jacobi matrices with applications}

\author{F.~\v{S}tampach, P.~\v{S}\v{t}ov\'\i\v{c}ek}

\date{{}}

\maketitle

\lyxaddress{Department of Mathematics, Faculty of Nuclear Science, Czech Technical
University in Prague, Trojanova13, 12000 Praha, Czech Republic}
\begin{abstract}
\noindent We introduce a class of Jacobi operators with discrete spectra
which is characterized by a simple convergence condition. With any
operator $J$ from this class we associate a characteristic function
as an analytic function on a suitable domain, and show that its zero
set actually coincides with the set of eigenvalues of $J$ in that
domain. Further we derive sufficient conditions under which the spectrum
of $J$ is approximated by spectra of truncated finite-dimensional
Jacobi matrices. As an application we construct several examples of
Jacobi matrices for which the characteristic function can be expressed
in terms of special functions. In more detail we study the example
where the diagonal sequence of $J$ is linear while the neighboring
parallels to the diagonal are constant.
\end{abstract}
\vskip\baselineskip\emph{Keywords}: infinite Jacobi matrix, spectral
problem, characteristic function

\section{Introduction}

In this paper we introduce and study a class of infinite symmetric
but in general complex Jacobi matrices $\mathcal{J}$ characterized
by a simple convergence condition. This class is also distinguished
by the discrete character of spectra of the corresponding Jacobi operators.
Doing so we extend and generalize an approach to Jacobi matrices which
was originally initiated, under much more restricted circumstances,
in \cite{StampachStovicek}. We refer to \cite{JanasNaboko} for a
rather general analysis of how the character of spectrum of a Jacobi
operator may depend on the asymptotic behavior of weights.

For a given Jacobi matrix $\mathcal{J}$, one constructs a characteristic
function $F_{\mathcal{J}}(z)$ as an analytic function on the domain
$\mathbb{C}_{0}^{\lambda}$ obtained by excluding from the complex
plane the closure of the range of the diagonal sequence $\lambda$
of $\mathcal{J}$ . Under some comparatively simple additional assumptions,
like requiring the real part of $\lambda$ to be semibounded or $\mathcal{J}$
to be real, one can show that $\mathcal{J}$ determines a unique closed
Jacobi operator $J$ on $\ell^{2}(\mathbb{N})$. Moreover, the spectrum
of $J$ in the domain $\mathbb{C}_{0}^{\lambda}$ is discrete and
coincides with the zero set of $F_{\mathcal{J}}(z)$. When establishing
this relationship one may also treat the poles of $F_{\mathcal{J}}(z)$
which occur at the points from the range of the sequence $\lambda$
not belonging to the set of accumulation points, however. In addition,
as an important step of the proof, one makes use of an explicit formula
for the Green function associated with $J$.

Apart of the localization of the spectrum we address too the question
of approximation of the spectrum by spectra of truncated finite-dimensional
Jacobi matrices. For bounded Hermitian Jacobi operators the problem
has been studied, for example in \cite{HartmanWinter,Arveson,IfantisPanagopoulos}.
We are aware of just a few papers, however, bringing some results
in this respect also about unbounded Jacobi operators \cite{IfantisSiafarikas,Ifantis_etal,Malejki}.
Our approach based on employing the characteristic function makes
it possible to derive sufficient conditions under which such an approximation
can be verified. This result partially reproduces and overlaps with
some theorems from \cite{Ifantis_etal}.

The characteristic function as well as numerous formulas throughout
the paper are expressed in terms of a function, called $\mathfrak{F}$,
defined on a subset of the space of complex sequences. In the introductory
part we recall from \cite{StampachStovicek} the definition of $\mathfrak{F}$
and its basic properties which are then completed by various additional
facts. On the other hand, we conclude the paper with some applications
of the derived results. We present several examples of Jacobi matrices
for which the characteristic function can be expressed in terms of
special functions (the Bessel functions or the basic hypergeometric
series). A particular attention is paid to the example where the diagonal
sequence $\lambda$ is linear while the neighboring parallels to the
diagonal are constant. In this case the characteristic equation in
the variable $z$ reads $J_{-z}(2w)=0$, with $w$ being a parameter,
and our main concern is how the spectrum of the Jacobi operator depends
on $w$.

\section{The function $\mathfrak{F}$}

\subsection{Definition and basic properties}

Let us recall from \cite{StampachStovicek} some basic definitions
and properties concerning a function $\mathfrak{F}$ defined on a
subset of the linear space formed by all complex sequences $x=\{x_{k}\}_{k=1}^{\infty}$.
Moreover, we complete this brief overview by a few additional facts.

\begin{definition} Define $\mathfrak{F}:D\rightarrow\mathbb{C}$,
\begin{equation}
\mathfrak{F}(x)=1+\sum_{m=1}^{\infty}(-1)^{m}\sum_{k_{1}=1}^{\infty}\,\sum_{k_{2}=k_{1}+2}^{\infty}\,\dots\,\sum_{k_{m}=k_{m-1}+2}^{\infty}\, x_{k_{1}}x_{k_{1}+1}x_{k_{2}}x_{k_{2}+1}\dots x_{k_{m}}x_{k_{m}+1},\label{eq:defn_F}\end{equation}
where \[
D=\left\{ \{x_{k}\}_{k=1}^{\infty}\subset\mathbb{C};\,\sum_{k=1}^{\infty}|x_{k}x_{k+1}|<\infty\right\} .\]
For a finite number of complex variables we identify $\mathfrak{F}(x_{1},x_{2},\dots,x_{n})$
with $\mathfrak{F}(x)$ where $x=(x_{1},x_{2},\dots,x_{n},0,0,0,\dots)$.
By convention, we also put $\mathfrak{F}(\emptyset)=1$ where $\emptyset$
is the empty sequence. \end{definition}

Let us remark that the value of $\mathfrak{F}$ on a finite complex
sequence can be expressed as the determinant of a finite Jacobi matrix.
Using some basic linear algebra it is easy to show that, for $n\in\mathbb{N}$
and $\{x_{j}\}_{j=1}^{n}\subset\mathbb{C}$, one has \begin{equation}
\mathfrak{F\!}\left(x_{1},x_{2},\dots,x_{n}\right)=\det X_{n}\label{eq:F_simple_det}\end{equation}
where\[
X_{n}=\begin{pmatrix}1 & x_{1}\\
x_{2} & 1 & x_{2}\\
 & \ddots & \ddots & \ddots\\
 &  & \ddots & \ddots & \ddots\\
 &  &  & x_{n-1} & 1 & x_{n-1}\\
 &  &  &  & x_{n} & 1\end{pmatrix}\!.\]

Note that the domain $D$ is not a linear space. One has, however,
$\ell^{2}(\mathbb{N})\subset D$. In fact, the absolute value of the
$m$th summand on the RHS of (\ref{eq:defn_F}) is majorized by the
expression \[
\sum_{\substack{k\in\mathbb{N}^{m}\\
k_{1}<k_{2}<\dots<k_{m}}
}|x_{k_{1}}x_{k_{1}+1}x_{k_{2}}x_{k_{2}+1}\dots x_{k_{m}}x_{k_{m}+1}|\leq\frac{1}{m!}\left(\sum_{j=1}^{\infty}|x_{j}x_{j+1}|\right)^{\! m}.\]
Hence for $x\in D$ one has the estimate \begin{equation}
\left|\mathfrak{F}(x)\right|\leq\exp\!\left(\sum_{k=1}^{\infty}|x_{k}x_{k+1}|\right)\!.\label{eq:F_ineq_exp}\end{equation}

Furthermore, $\mathfrak{F}$ satisfies the relation \begin{equation}
\mathfrak{F}(x)=\mathfrak{F}(x_{1},\dots,x_{k})\,\mathfrak{F}(T^{k}x)-\mathfrak{F}(x_{1},\dots,x_{k-1})x_{k}x_{k+1}\mathfrak{F}(T^{k+1}x),\quad k=1,2,\dots,\label{eq:F_T_recur_k}\end{equation}
where $x\in D$ and $T$ denotes the truncation operator from the
left defined on the space of all sequences, $T(\{x_{n}\}_{n=1}^{\infty})=\{x_{n+1}\}_{n=1}^{\infty}$.
In particular, for $k=1$ one gets the rule \begin{equation}
\mathfrak{F}(x)=\mathfrak{F}(Tx)-x_{1}x_{2}\mathfrak{F}(T^{2}x).\label{eq:F_T_recur}\end{equation}
In addition, one has the symmetry property \[
\mathfrak{F}(x_{1},x_{2},\dots,x_{k-1},x_{k})=\mathfrak{F}(x_{k},x_{k-1},\dots,x_{2},x_{1}).\]
If combined with (\ref{eq:F_T_recur_k}), one gets\begin{equation}
\mathfrak{F}(x_{1},x_{2},\dots,x_{k+1})=\mathfrak{F}(x_{1},x_{2},\dots,x_{k})-x_{k}x_{k+1}\,\mathfrak{F}(x_{1},x_{2},\dots,x_{k-1}).\label{eq:calF_reccur_reverse}\end{equation}

\begin{lemma} \label{lemm:F_fin_lim} For $x\in D$ one has \begin{equation}
\lim_{n\rightarrow\infty}\mathfrak{F}(T^{n}x)=1\label{eq:lim_F_T_n}\end{equation}
and \begin{equation}
\lim_{n\rightarrow\infty}\mathfrak{F}(x_{1},x_{2},\dots,x_{n})=\mathfrak{F}(x).\label{eq:F_fin_lim}\end{equation}
\end{lemma}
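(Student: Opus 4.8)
The plan is to reduce both statements to a single observation: since $x\in D$, the series $\sum_{k=1}^{\infty}|x_k x_{k+1}|$ converges, and hence its tail $\sum_{j=n}^{\infty}|x_j x_{j+1}|$ tends to $0$ as $n\to\infty$. Everything else is bookkeeping around the exponential estimate \eqref{eq:F_ineq_exp} and the recurrence \eqref{eq:F_T_recur_k}.

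For \eqref{eq:lim_F_T_n} I would apply \eqref{eq:F_ineq_exp} to the shifted sequence $T^n x$. Its controlling sum is
\[
\sum_{k=1}^{\infty}\bigl|(T^{n}x)_k\,(T^{n}x)_{k+1}\bigr|=\sum_{j=n+1}^{\infty}|x_j x_{j+1}|,
\]
a tail of the convergent series. Writing out \eqref{eq:defn_F} for $T^n x$ and isolating the constant term $1$, the very majorization that produced \eqref{eq:F_ineq_exp} gives
\[
\bigl|\mathfrak{F}(T^{n}x)-1\bigr|\le\exp\!\Bigl(\sum_{j=n+1}^{\infty}|x_j x_{j+1}|\Bigr)-1,
\]
whose right-hand side tends to $0$ because the exponent does. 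This settles the first limit.

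For \eqref{eq:F_fin_lim} I would invoke \eqref{eq:F_T_recur_k} with $k=n$, namely
\[
\mathfrak{F}(x)=\mathfrak{F}(x_1,\dots,x_n)\,\mathfrak{F}(T^{n}x)-\mathfrak{F}(x_1,\dots,x_{n-1})\,x_n x_{n+1}\,\mathfrak{F}(T^{n+1}x).
\]
By the first part $\mathfrak{F}(T^{n}x)\to 1$ and $\mathfrak{F}(T^{n+1}x)\to 1$; by \eqref{eq:F_ineq_exp} both $\mathfrak{F}(x_1,\dots,x_{n-1})$ and $\mathfrak{F}(x_1,\dots,x_n)$ are bounded uniformly in $n$ by $\exp(\sum_k|x_k x_{k+1}|)$; and $|x_n x_{n+1}|\to 0$ as the general term of a convergent series. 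Hence the subtracted term vanishes in the limit, and since $\mathfrak{F}(T^{n}x)\to 1\neq 0$ one may divide to conclude $\mathfrak{F}(x_1,\dots,x_n)\to\mathfrak{F}(x)$. Alternatively, and more directly, one can argue that in $\mathfrak{F}(x_1,\dots,x_n)=\mathfrak{F}(x_1,\dots,x_n,0,0,\dots)$ every monomial of \eqref{eq:defn_F} whose largest index $k_m+1$ exceeds $n$ is annihilated, so $\mathfrak{F}(x)-\mathfrak{F}(x_1,\dots,x_n)$ collects exactly the monomials with $k_m\ge n$; factoring out the last pair $x_{k_m}x_{k_m+1}$ and bounding the remaining elementary-symmetric sum by $A^{m-1}/(m-1)!$, where $A:=\sum_k|x_k x_{k+1}|$, yields $|\mathfrak{F}(x)-\mathfrak{F}(x_1,\dots,x_n)|\le e^{A}\sum_{k=n}^{\infty}|x_k x_{k+1}|\to 0$.

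There is no deep obstacle here; the computations are routine once the tail estimate is in place. The only points demanding a little care are, in the recurrence approach, checking that $\mathfrak{F}(T^{n}x)$ is eventually nonzero before dividing (immediate from the first limit), and, in the direct approach, correctly identifying which monomials survive the truncation so that the resulting tail estimate is organized properly.
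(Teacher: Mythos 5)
Your proposal is correct and follows essentially the same route as the paper: the first limit via the exponential tail estimate underlying (\ref{eq:F_ineq_exp}), and the second via the recurrence (\ref{eq:F_T_recur_k}) with $k=n$ together with uniform boundedness of the truncated values and $|x_{n}x_{n+1}|\to0$; your dividing by $\mathfrak{F}(T^{n}x)$ rather than bounding the difference as in the paper's estimate (\ref{eq:estim_F-Ffin}) is an immaterial rearrangement, duly protected by your eventual-nonvanishing remark. Your alternative direct argument, collecting the monomials with $k_{m}\geq n$, is also sound and in fact yields a slightly sharper constant ($e^{A}$ in place of $2e^{2A}$), but it is a minor variation rather than a different method.
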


\begin{proof} First, similarly as in (\ref{eq:F_ineq_exp}), one
gets the estimate\[
\left|\mathfrak{F}(T^{n}x)-1\right|\leq\exp\!\left(\sum_{k=n+1}^{\infty}|x_{k}x_{k+1}|\right)-1.\]
This shows (\ref{eq:lim_F_T_n}).

Second, in view of (\ref{eq:F_T_recur_k}), the difference $|\mathfrak{F}(x)-\mathfrak{F}(x_{1},x_{2},\dots,x_{n})|$
can be majorized by the expression \[
|1-\mathfrak{F}(T^{n}x)|\exp\!\left(\sum_{k=1}^{\infty}|x_{k}x_{k+1}|\right)+|x_{n}x_{n+1}|\exp\!\left(2\sum_{k=1}^{\infty}|x_{k}x_{k+1}|\right)\!.\]
From here one derives the (rather rough) estimate\begin{equation}
|\mathfrak{F}(x)-\mathfrak{F}(x_{1},x_{2},\dots,x_{n})|\leq2\exp\!\left(2\sum_{k=1}^{\infty}|x_{k}x_{k+1}|\right)\sum_{k=n}^{\infty}|x_{k}x_{k+1}|.\label{eq:estim_F-Ffin}\end{equation}
This shows (\ref{eq:F_fin_lim}). \end{proof}

\begin{proposition} \label{prop:contin_F} The function $\mathfrak{F}$
is continuous on $\ell^{2}(\mathbb{N})$. \end{proposition}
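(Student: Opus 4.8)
The plan is to realize $\mathfrak{F}$ on $\ell^{2}(\mathbb{N})$ as a pointwise limit of the finite-section functions $x\mapsto\mathfrak{F}(x_{1},\dots,x_{n})$, each of which is continuous because by \eqref{eq:F_simple_det} it equals the determinant $\det X_{n}$, a polynomial in the finitely many entries $x_{1},\dots,x_{n}$. Throughout I would lean on the elementary Cauchy--Schwarz bound $\sum_{k=1}^{\infty}|x_{k}x_{k+1}|\leq\|x\|_{2}^{2}$, which reconfirms $\ell^{2}(\mathbb{N})\subset D$ and, more importantly, keeps the exponential factors in \eqref{eq:F_ineq_exp} and \eqref{eq:estim_F-Ffin} bounded on bounded subsets of $\ell^{2}$.

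To prove continuity at a fixed point $x\in\ell^{2}(\mathbb{N})$ I would fix $\varepsilon>0$ and, for $y$ with $\|y-x\|_{2}\leq1$ (so that $\|y\|_{2}\leq R:=\|x\|_{2}+1$), split
\[
|\mathfrak{F}(y)-\mathfrak{F}(x)|\leq|\mathfrak{F}(y)-\mathfrak{F}(y_{1},\dots,y_{n})|+|\mathfrak{F}(y_{1},\dots,y_{n})-\mathfrak{F}(x_{1},\dots,x_{n})|+|\mathfrak{F}(x_{1},\dots,x_{n})-\mathfrak{F}(x)|.
\]
The third term is governed solely by $x$: by \eqref{eq:F_fin_lim} it drops below $\varepsilon/3$ once $n$ is large. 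The middle term, with this $n$ now fixed, is an increment of the polynomial $\det X_{n}$; since $\|y-x\|_{2}\to0$ forces $y_{k}\to x_{k}$ for every $k\leq n$, it falls below $\varepsilon/3$ as soon as $\|y-x\|_{2}$ is small enough. Only the first term is delicate.

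The main obstacle is that the convergence $\mathfrak{F}(x_{1},\dots,x_{n})\to\mathfrak{F}(x)$ is \emph{not} uniform on balls of $\ell^{2}(\mathbb{N})$: the tail $\sum_{k\geq n}|x_{k}x_{k+1}|$ in \eqref{eq:estim_F-Ffin} cannot be made small uniformly over $\|x\|_{2}\leq R$. The remedy is to observe that the first term is controlled by the tail of $y$ and that the $\ell^{2}$-tails of $y$ are slaved to those of $x$. Indeed, \eqref{eq:estim_F-Ffin} together with the tail Cauchy--Schwarz estimate $\sum_{k\geq n}|y_{k}y_{k+1}|\leq\sum_{k\geq n}|y_{k}|^{2}$ yields
\[
|\mathfrak{F}(y)-\mathfrak{F}(y_{1},\dots,y_{n})|\leq2e^{2R^{2}}\sum_{k\geq n}|y_{k}|^{2},
\]
while the triangle inequality in $\ell^{2}$ restricted to indices $k\geq n$ gives $\bigl(\sum_{k\geq n}|y_{k}|^{2}\bigr)^{1/2}\leq\bigl(\sum_{k\geq n}|x_{k}|^{2}\bigr)^{1/2}+\|y-x\|_{2}$. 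Enlarging $n$ thus makes the $x$-tail small, and shrinking $\|y-x\|_{2}$ makes the remaining contribution small, so the first term too can be forced below $\varepsilon/3$. Choosing $n$ first (to handle the third term and the $x$-tail) and then $\delta\leq1$ small enough (to handle the middle term and the $y$-perturbation of the tail) completes the argument.
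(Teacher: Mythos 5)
Your proof is correct and takes essentially the same route as the paper: both rest on the tail estimate (\ref{eq:estim_F-Ffin}) combined with the Cauchy--Schwarz bound $\sum_{k\geq n}|x_k x_{k+1}|\leq\|(I-P_{n-1})x\|^2$ and the continuity of the polynomial truncations $\mathfrak{F}(x_1,\dots,x_n)$. The only difference is that you spell out the $\varepsilon/3$ argument, including the triangle inequality for $\ell^2$-tails of $y$ near $x$, which the paper compresses into ``the proposition readily follows.''
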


\begin{proof} If $x\in\ell^{2}(\mathbb{N})\subset D$ then from (\ref{eq:estim_F-Ffin})
one derives that, for any $n\in\mathbb{N}$,\[
|\mathfrak{F}(x)-\mathfrak{F}(x_{1},x_{2},\dots,x_{n})|\leq2\,\exp\!\left(2\,\|x\|^{2}\right)\,\|(I-P_{n-1})x\|^{2}\]
where $P_{m}$ stands for the orthogonal projection on $\ell^{2}(\mathbb{N})$
onto the subspace spanned by the first $m$ vectors of the standard
basis. From this estimate and from the fact that $\mathfrak{F}(x_{1},x_{2},\dots,x_{n})$
is a polynomial function the proposition readily follows. \end{proof}

\subsection{Jacobi matrices}

\noindent Let us denote by $\mathcal{J}$ an infinite Jacobi matrix
of the form\[
\mathcal{J}=\begin{pmatrix}\lambda_{1} & w_{1}\\
v_{1} & \lambda_{2} & w_{2}\\
 & v_{2} & \lambda_{3} & w_{3}\\
 &  & \ddots & \ddots & \ddots\end{pmatrix}\]
where $w=\{w_{n}\}_{n=1}^{\infty},v=\{v_{n}\}_{n=1}^{\infty}\subset\mathbb{C}\setminus\{0\}$
and $\lambda=\{\lambda_{n}\}_{n=1}^{\infty}\subset\mathbb{C}$. Provided
any of the sequences is unbounded it is reasonable to distinguish
in the notation between $\mathcal{J}$ and an operator represented
by this matrix. Such an operator $J$ need not be unique, as discussed
in Subsection~\ref{subsec:operatorJ}. Further, by $J_{n}$ we denote
the $n$th truncation of $\mathcal{J}$, i.e. \begin{equation}
J_{n}=\begin{pmatrix}\lambda_{1} & w_{1}\\
v_{1} & \lambda_{2} & w_{2}\\
 & \ddots & \ddots & \ddots\\
 &  & v_{n-2} & \lambda_{n-1} & w_{n-1}\\
 &  &  & v_{n-1} & \lambda_{n}\end{pmatrix}\!.\label{eq:def Jn}\end{equation}

As is well known and in fact quite obvious, any solution $\{x_{k}\}$
of the formal eigenvalue equation\begin{equation}
\lambda_{1}x_{1}+w_{1}x_{2}=zx_{1},\ \ v_{k-1}x_{k-1}+\lambda_{k}x_{k}+w_{k}x_{k+1}=zx_{k}\ \ \text{\text{for }}k\geq2,\label{eq:eigenvalue}\end{equation}
with $z\in\mathbb{C}$, is unambiguously determined by its first component
$x_{1}$. Consequently, any operator $J$ whose matrix equals $\mathcal{J}$
may have only simple eigenvalues.

We wish to show that the characteristic function of a finite Jacobi
matrix $J_{n}$ can be expressed in terms of $\mathfrak{F}$. To this
end, let us introduce the sequences $\{\gamma_{k}^{\pm}\}_{k=1}^{n}$
defined recursively by\begin{equation}
\gamma_{1}^{\pm}=1,\ \gamma_{k+1}^{+}=w_{k}/\gamma_{k}^{-}\,\ \text{and}\ \,\gamma_{k+1}^{-}=v_{k}/\gamma_{k}^{+},\ k\geq1.\label{eq:gamma_pm_seq}\end{equation}
More explicitly, the sequence $\{\gamma_{k}^{-}\}_{k=1}^{n}$ can
be expressed as\[
\gamma_{2k-1}^{-}=\prod_{j=1}^{k-1}\frac{v_{2j}}{w_{2j-1}}\,,\mbox{ }\gamma_{2k}^{-}=v_{1}\prod_{j=1}^{k-1}\frac{v_{2j+1}}{w_{2j}}\,,\mbox{ }k=1,2,3,\ldots.\]
As for the sequence $\{\gamma_{k}^{+}\}_{k=1}^{n}$, the corresponding
expressions are of the same form but with $w$ being replaced by $v$
and vice versa. Note that if $v_{k}=w_{k}$ for all $k=1,2,\dots,n-1$,
then $\gamma_{k}^{-}=\gamma_{k}^{+}$ for all $k=1,2,\dots,n$.

\begin{proposition} \label{prop:charfce_finJ} Let $\{\gamma_{k}^{\pm}\}_{k=1}^{n}$
be the sequences defined in (\ref{eq:gamma_pm_seq}). Then the equality
\begin{equation}
\det(J_{n}-zI_{n})=\left(\prod_{k=1}^{n}(\lambda_{k}-z)\right)\mathfrak{F}\!\left(\frac{\gamma_{1}^{-}\gamma_{1}^{+}}{\lambda_{1}-z},\frac{\gamma_{2}^{-}\gamma_{2}^{+}}{\lambda_{2}-z},\dots,\frac{\gamma_{n}^{-}\gamma_{n}^{+}}{\lambda_{n}-z}\right)\label{eq:char_pol_general}\end{equation}
holds for all $z\in\mathbb{C}$ (after obvious cancellations, the
RHS is well defined even for $z=\lambda_{k}$). \end{proposition}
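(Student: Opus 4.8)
The plan is to prove the identity by induction on $n$, exploiting the tridiagonal structure of $J_n$ and matching it against the recurrence (\ref{eq:calF_reccur_reverse}) satisfied by $\mathfrak{F}$. First I would expand $\det(J_n - zI_n)$ along its last row (or column), which for a tridiagonal matrix yields the classical three-term recurrence
\begin{equation}
\det(J_n - zI_n) = (\lambda_n - z)\det(J_{n-1}-zI_{n-1}) - v_{n-1}w_{n-1}\det(J_{n-2}-zI_{n-2}). \notag
\end{equation}
This is the structural backbone: the determinant of a truncated Jacobi matrix depends only on $v_{n-1}w_{n-1}$ (the product of the off-diagonal entries in the corner), not on $v_{n-1}$ and $w_{n-1}$ separately. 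The base cases $n=1$ and $n=2$ can be checked directly against the right-hand side of (\ref{eq:char_pol_general}), using $\mathfrak{F}(x_1)=1$ and $\mathfrak{F}(x_1,x_2)=1-x_1x_2$.

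Next I would denote the proposed right-hand side of (\ref{eq:char_pol_general}) by
\begin{equation}
R_n = \left(\prod_{k=1}^{n}(\lambda_k - z)\right)\mathfrak{F}\!\left(\frac{\gamma_1^-\gamma_1^+}{\lambda_1-z},\dots,\frac{\gamma_n^-\gamma_n^+}{\lambda_n-z}\right), \notag
\end{equation}
and show that $R_n$ satisfies the same three-term recurrence. Writing $x_k = \gamma_k^-\gamma_k^+/(\lambda_k-z)$, I apply (\ref{eq:calF_reccur_reverse}) to the $\mathfrak{F}$-factor in $R_n$ to obtain $\mathfrak{F}(x_1,\dots,x_n) = \mathfrak{F}(x_1,\dots,x_{n-1}) - x_{n-1}x_n\,\mathfrak{F}(x_1,\dots,x_{n-2})$. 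Multiplying through by $\prod_{k=1}^n(\lambda_k-z)$ and separating out one factor $(\lambda_n-z)$ gives
\begin{equation}
R_n = (\lambda_n - z)R_{n-1} - (\lambda_n-z)\Bigl(\prod_{k=1}^{n-1}(\lambda_k-z)\Bigr)x_{n-1}x_n\,\mathfrak{F}(x_1,\dots,x_{n-2}). \notag
\end{equation}
The task then reduces to verifying that the trailing term equals $v_{n-1}w_{n-1}R_{n-2}$, i.e. that $(\lambda_n-z)(\lambda_{n-1}-z)\,x_{n-1}x_n = v_{n-1}w_{n-1}$ after the $(\lambda_{n-1}-z)(\lambda_n-z)$ denominators from $x_{n-1},x_n$ are cleared. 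Since $x_{n-1}x_n$ carries the factor $\gamma_{n-1}^-\gamma_{n-1}^+\gamma_n^-\gamma_n^+/[(\lambda_{n-1}-z)(\lambda_n-z)]$, the whole identity collapses to the purely algebraic claim $\gamma_{n-1}^-\gamma_{n-1}^+\gamma_n^-\gamma_n^+ = v_{n-1}w_{n-1}$.

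The main obstacle — really the only nontrivial point — is this last algebraic identity on the $\gamma$-sequences, and it follows immediately from their defining recurrence (\ref{eq:gamma_pm_seq}): since $\gamma_n^+ = w_{n-1}/\gamma_{n-1}^-$ and $\gamma_n^- = v_{n-1}/\gamma_{n-1}^+$, one gets $\gamma_{n-1}^-\gamma_n^+ = w_{n-1}$ and $\gamma_{n-1}^+\gamma_n^- = v_{n-1}$, whose product is exactly $v_{n-1}w_{n-1}$. This is precisely the point of introducing the $\gamma_k^\pm$: they factor the off-diagonal products so that $\mathfrak{F}$, which only sees products $x_kx_{k+1}$, reproduces the determinant. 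With this identity in hand, $R_n$ satisfies the identical three-term recurrence as $\det(J_n-zI_n)$ with matching initial data, so they coincide for all $n$ and all $z$, the removable singularities at $z=\lambda_k$ being handled by the observation that both sides are polynomials in $z$ and the cancellations on the right are genuine.
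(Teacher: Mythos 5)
Your proof is correct, but it takes a genuinely different route from the paper's. The paper disposes of the proposition in a few lines via the factorization $J_{n}=G_{n}^{-}\tilde{J}_{n}G_{n}^{+}$, with $G_{n}^{\pm}=\diag(\gamma_{1}^{\pm},\ldots,\gamma_{n}^{\pm})$ and $\tilde{J}_{n}$ the Jacobi matrix with diagonal entries $\lambda_{k}/(\gamma_{k}^{-}\gamma_{k}^{+})$ and all units on the neighboring parallels; since $G_{n}^{-}G_{n}^{+}=\diag(\gamma_{1}^{-}\gamma_{1}^{+},\ldots,\gamma_{n}^{-}\gamma_{n}^{+})$, taking determinants and factoring the diagonal out of each row reduces the claim directly to the previously established identity (\ref{eq:F_simple_det}) expressing $\mathfrak{F}$ of a finite sequence as $\det X_{n}$. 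You instead run an induction on $n$, matching the Laplace-expansion three-term recurrence for $\det(J_{n}-zI_{n})$ against the recurrence (\ref{eq:calF_reccur_reverse}) for $\mathfrak{F}$; the pivotal cancellations $\gamma_{n-1}^{-}\gamma_{n}^{+}=w_{n-1}$ and $\gamma_{n-1}^{+}\gamma_{n}^{-}=v_{n-1}$ that you isolate from (\ref{eq:gamma_pm_seq}) are exactly what makes the paper's conjugation by $G_{n}^{\pm}$ normalize the off-diagonal entries to units, so the two arguments hinge on the same algebraic fact about the $\gamma$-sequences. What the paper's route buys is brevity and a structural explanation of why the $\gamma_{k}^{\pm}$ appear (at the cost of leaning on (\ref{eq:F_simple_det}) and on \cite{StampachStovicek}); what yours buys is a self-contained elementary verification that in effect reproves (\ref{eq:F_simple_det}) in the weighted setting, with the base cases checked explicitly and the removable singularities at $z=\lambda_{k}$ handled correctly (the two sides agree off the poles, the left side is a polynomial, so the singularities of the rational right side are genuinely removable).
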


\begin{proof} Put $\tilde{\lambda}_{k}=\lambda_{k}/\gamma_{k}^{-}\gamma_{k}^{+}$.
As remarked in \cite[Remark~24]{StampachStovicek}, the Jacobi matrix
$J_{n}$ can be decomposed into the product $J_{n}=G_{n}^{-}\tilde{J_{n}}G_{n}^{+}$
where $G_{n}^{\pm}=\diag(\gamma_{1}^{\pm},\gamma_{2}^{\pm},\ldots,\gamma_{n}^{\pm})$
are diagonal matrices, and $\tilde{J_{n}}$ is a Jacobi matrix whose
diagonal equals the sequence $(\tilde{\lambda}_{1},\tilde{\lambda}_{2},\ldots,\tilde{\lambda}_{n})$
and which has all units on the neighboring parallels to the diagonal.
The proposition now readily follows from this decomposition combined
with (\ref{eq:F_simple_det}). \end{proof}

Moreover, with the aid of (\ref{eq:char_pol_general}) and using some
basic calculus from linear algebra one can derive the following formula
for the resolvent.

\begin{proposition} \label{prop:Green_finJ} The matrix entries of
the resolvent $R_{n}(z)=(J_{n}-zI_{n})^{-1}$, with $z\in\mathbb{C}\setminus\spec(J_{n})$,
may be expressed as ($1\leq i,j\leq n$)

\begin{eqnarray}
 &  & \hskip-3emR_{n}(z)_{i,j}=-\,\Omega(i,j)\left(\prod_{l=\min(i,j)}^{\max(i,j)}\,(z-\lambda_{l})\right)^{\!-1}\label{eq:green_fin}\\
 &  & \hskip-1em\qquad\times\,\mathfrak{F}\!\left(\left\{ \frac{\gamma_{l}^{-}\gamma_{l}^{+}}{\lambda_{l}-z}\right\} _{l=1}^{\min(i,j)-1}\right)\mathfrak{F}\!\left(\left\{ \frac{\gamma_{l}^{-}\gamma_{l}^{+}}{\lambda_{l}-z}\right\} _{l=\max(i,j)+1}^{n}\right)\mathfrak{F}\!\left(\left\{ \frac{\gamma_{l}^{-}\gamma_{l}^{+}}{\lambda_{l}-z}\right\} _{l=1}^{n}\right)^{\!-1}\nonumber \end{eqnarray}
where\[
\Omega(i,j)=\begin{cases}
\prod_{l=i}^{j-1}w_{l}, & \mbox{if }i<j,\\
\noalign{\smallskip}1, & \mbox{if }i=j,\\
\noalign{\smallskip}\prod_{l=j}^{i-1}v_{l}, & \mbox{if }i>j.\end{cases}\]
\end{proposition}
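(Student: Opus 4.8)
The plan is to start from Cramer's rule and reduce everything to the determinant formula (\ref{eq:char_pol_general}). Writing $A=J_{n}-zI_{n}$, the resolvent entry is
\[
R_{n}(z)_{i,j}=(A^{-1})_{i,j}=\frac{(-1)^{i+j}}{\det A}\,M_{j,i},
\]
where $M_{j,i}$ is the minor of $A$ obtained by deleting the $j$-th row and the $i$-th column. The denominator is already given by Proposition~\ref{prop:charfce_finJ}, so the task is to evaluate the numerator $M_{j,i}$ and then collect the signs.

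First I would factor the minor using the tridiagonal structure of $A$. Assume $i<j$ (the case $i>j$ follows by applying the same argument to $A^{T}$, whose off-diagonals are $v\leftrightarrow w$ and whose principal minors agree with those of $A$; the case $i=j$ is the degenerate one with $\Omega=1$). Deleting column $i$ decouples the first $i-1$ rows, which after the deletion meet only the first $i-1$ columns; hence $M_{j,i}$ splits off as a block the leading principal minor $\theta_{i-1}:=\det(J_{i-1}-zI_{i-1})$. In the remaining block each of the rows $i,i+1,\dots,j-1$ retains a single surviving entry $w_{i},w_{i+1},\dots,w_{j-1}$ sitting in the current top-left corner; expanding successively along these rows peels off the factor $\Omega(i,j)=\prod_{l=i}^{j-1}w_{l}$ with trivial cofactor signs and leaves exactly the trailing minor $\phi_{j+1}$, the determinant of the block of $A$ indexed by $j+1,\dots,n$. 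This gives $M_{j,i}=\theta_{i-1}\,\Omega(i,j)\,\phi_{j+1}$, and symmetrically $M_{j,i}=\theta_{j-1}\,\Omega(i,j)\,\phi_{i+1}$ when $i>j$; in both cases the relevant determinants are $\theta_{m-1}$ and $\phi_{M+1}$ with $m=\min(i,j)$, $M=\max(i,j)$.

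Next I would express $\theta_{m-1}$, $\phi_{M+1}$ and $\det A=\theta_{n}$ through $\mathfrak{F}$ by Proposition~\ref{prop:charfce_finJ}. For $\theta_{m-1}$ and $\theta_{n}$ this is immediate, since the recursion (\ref{eq:gamma_pm_seq}) determines $\gamma_{l}^{\pm}$ from the entries of smaller index, so the leading truncations carry the same sequences $\gamma_{l}^{\pm}$. The one genuinely delicate point — and the step I expect to be the main obstacle — is the trailing block $\phi_{M+1}$: applying Proposition~\ref{prop:charfce_finJ} to it produces an $\mathfrak{F}$ whose arguments involve the sequences $\tilde\gamma_{l}^{\pm}$ of the trailing matrix, which restart from $\tilde\gamma_{1}^{\pm}=1$ and hence do not coincide with the original $\gamma_{l}^{\pm}$. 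To reconcile the two I would use that $\mathfrak{F}$ depends on its arguments only through the products of consecutive entries $x_{l}x_{l+1}$, which is visible directly from (\ref{eq:defn_F}) (each index occurs only paired as $x_{k_{s}}x_{k_{s}+1}$). Combined with the identity $\gamma_{l}^{-}\gamma_{l}^{+}\,\gamma_{l+1}^{-}\gamma_{l+1}^{+}=w_{l}v_{l}$, an immediate consequence of (\ref{eq:gamma_pm_seq}), one checks that the products of consecutive arguments agree for the two indexings, so the two values of $\mathfrak{F}$ coincide and $\phi_{M+1}$ is expressed with the original $\gamma_{l}^{\pm}$ exactly as in the statement.

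Finally I would substitute and track the signs. The quotient of the $\prod(\lambda_{k}-z)$ prefactors coming from $\theta_{m-1}\phi_{M+1}/\theta_{n}$ collapses to $\bigl(\prod_{l=m}^{M}(\lambda_{l}-z)\bigr)^{-1}$; rewriting $\prod_{l=m}^{M}(\lambda_{l}-z)=(-1)^{M-m+1}\prod_{l=m}^{M}(z-\lambda_{l})$ and combining with the Cramer sign $(-1)^{i+j}$ yields, since $i+j-(M-m+1)=2m-1$ is odd, precisely the single overall minus sign in (\ref{eq:green_fin}). The three surviving $\mathfrak{F}$ factors are then exactly those displayed in the proposition, which completes the argument.
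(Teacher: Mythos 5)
Your proof is correct and follows essentially the route the paper intends: the paper gives no detailed argument for Proposition~\ref{prop:Green_finJ}, stating only that it follows from (\ref{eq:char_pol_general}) together with ``basic calculus from linear algebra,'' which is precisely the Cramer's-rule computation with the tridiagonal minor factorization $M_{j,i}=\theta_{\min(i,j)-1}\,\Omega(i,j)\,\phi_{\max(i,j)+1}$ that you carried out, including the correct sign bookkeeping. Your handling of the restarted sequences $\tilde\gamma_l^{\pm}$ of the trailing block---via the observation that $\mathfrak{F}$ depends on its arguments only through the consecutive products $x_lx_{l+1}=w_lv_l/\bigl((\lambda_l-z)(\lambda_{l+1}-z)\bigr)$, which are insensitive to the initial condition of the recursion (\ref{eq:gamma_pm_seq})---correctly settles the one point the paper leaves implicit.
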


In the remainder of the paper we concentrate, however, on symmetric
Jacobi matrices with $v=w$, i.e. we put \[
\mathcal{J}=\begin{pmatrix}\lambda_{1} & w_{1}\\
w_{1} & \lambda_{2} & w_{2}\\
 & w_{2} & \lambda_{3} & w_{3}\\
 &  & \ddots & \ddots & \ddots\end{pmatrix}\!,\]
where $\lambda=\{\lambda_{n}\}_{n=1}^{\infty}\subset\mathbb{C}$ and
$w=\{w_{n}\}_{n=1}^{\infty}\subset\mathbb{C}\setminus\{0\}$. In that
case some definitions introduced above simplify. First of all, one
has $\gamma_{k}^{-}=\gamma_{k}^{+}=\gamma_{k}$ where\[
\gamma_{2k-1}=\prod_{j=1}^{k-1}\frac{w_{2j}}{w_{2j-1}}\,,\mbox{ }\gamma_{2k}=w_{1}\prod_{j=1}^{k-1}\frac{w_{2j+1}}{w_{2j}}\,,\mbox{ }k=1,2,3,\ldots.\]
Then $\gamma_{k}\gamma_{k+1}=w_{k}$.

\subsection{More on the function $\mathfrak{F}$}

In \cite{StampachStovicek} one can find two examples of special functions
expressed in terms of $\mathfrak{F}$. The first example is concerned
with the Bessel functions of the first kind. In more detail, for $w,\nu\in\mathbb{C}$,
$\nu\notin-\mathbb{N}$, one has \begin{equation}
J_{\nu}(2w)=\frac{w^{\nu}}{\Gamma(\nu+1)}\,\mathfrak{F}\!\left(\left\{ \frac{w}{\nu+k}\right\} _{k=1}^{\infty}\right)\!.\label{eq:BesselJ_rel_F}\end{equation}
Notice that jointly with (\ref{eq:F_ineq_exp}) this implies\begin{equation}
\left|J_{\nu}(2w)\right|\leq\left|\frac{w^{\nu}}{\Gamma(\nu+1)}\right|\,\exp\!\left(\sum_{k=1}^{\infty}\left|\frac{w^{2}}{(\nu+k)(\nu+k+1)}\right|\right)\!.\label{eq:Bessel_estim}\end{equation}

In the second example one shows that the formula \begin{equation}
\mathfrak{F}\!\left(\left\{ t^{k-1}w\right\} _{k=1}^{\infty}\right)=1+\sum_{m=1}^{\infty}(-1)^{m}\,\frac{t^{m(2m-1)}w^{2m}}{(1-t^{2})(1-t^{4})\dots(1-t^{2m})}={}_{0}\phi_{1}(;0;t^{2},-tw^{2})\label{eq:F_geom_tw}\end{equation}
holds for $t,w\in\mathbb{C}$, $|t|<1$. Here $_{0}\phi_{1}$ is the
basic hypergeometric series (also called q-hypergeometric series)
being defined by\[
_{0}\phi_{1}(;b;q,z)=\sum_{k=0}^{\infty}\frac{q^{k(k-1)}}{(q;q)_{k}(b;q)_{k}}\, z^{k},\]
and\[
(a;q)_{k}=\prod_{j=0}^{k-1}\left(1-aq^{j}\right),\mbox{ }k=0,1,2,\ldots,\]
is the $q$-Pochhammer symbol; see \cite{GasperRahman} for more details.

In this connection let us recall one more identity proved in \cite[Lemma~9]{StampachStovicek},
namely\begin{eqnarray*}
 &  & u_{1}\,\mathfrak{F}\left(u_{2},u_{3},\ldots,u_{n}\right)\mathfrak{F}\left(v_{1},v_{2},\ldots,v_{n}\right)-v_{1}\,\mathfrak{F}\left(u_{1},u_{2},\ldots,u_{n}\right)\mathfrak{F}\left(v_{2},v_{3},\ldots,v_{n}\right)\\
 &  & =\,\sum_{j=1}^{n}\left(\prod_{k=1}^{j-1}u_{k}v_{k}\right)\left(u_{j}-v_{j}\right)\mathfrak{F}\left(u_{j+1},u_{j+2},\ldots,u_{n}\right)\mathfrak{F}\left(v_{j+1},v_{j+2},\ldots,v_{n}\right).\end{eqnarray*}
For the particular choice\[
u_{k}=\frac{w}{\mu+k}\,,\ v_{k}=\frac{w}{\nu+k}\,,\text{ }1\leq k\leq n,\]
one can consider the limit $n\to\infty$. Using (\ref{eq:BesselJ_rel_F})
and (\ref{eq:Bessel_estim}) one arrives at the equation\begin{equation}
J_{\mu}(2w)J_{\nu+1}(2w)-J_{\mu+1}(2w)J_{\nu}(2w)=\frac{\mu-\nu}{w}\,\sum_{j=1}^{\infty}J_{\mu+j}(2w)J_{\nu+j}(2w).\label{eq:sum_Bessel_mn}\end{equation}

Definition (\ref{eq:defn_F}) can naturally be extended to more general
ranges of indices. For any sequence $\left\{ x_{n}\right\} _{n=N_{1}}^{N_{2}}$,
$N_{1},N_{2}\in\mathbb{Z}\cup\{-\infty,+\infty\}$, $N_{1}\leq N_{2}+1$,
(if $N_{1}=N_{2}+1\in\mathbb{Z}$ then the sequence is considered
as empty) such that\[
\sum_{k=N_{1}}^{N_{2}-1}\left|x_{k}x_{k+1}\right|<\infty\]
one can define\[
\mathfrak{F}\!\left(\left\{ x_{k}\right\} _{k=N_{1}}^{N_{2}}\right)=1+\sum_{m=1}^{\infty}(-1)^{m}\sum_{k\in\mathcal{I}(N_{1},N_{2},m)}x_{k_{1}}x_{k_{1}+1}x_{k_{2}}x_{k_{2}+1}\ldots x_{k_{m}}x_{k_{m}+1}\]
where\[
\mathcal{I}(N_{1},N_{2},m)=\left\{ k\in\mathbb{Z}^{m};\, k_{j}+2\leq k_{j+1}\text{ }\text{for}\ 1\leq j\leq m-1,\ N_{1}\leq k_{1},\ k_{m}<N_{2}\right\} .\]

With this definition one can generalize the rule (\ref{eq:F_T_recur_k}).
Now one has\begin{equation}
\mathfrak{F}\!\left(\left\{ x_{k}\right\} _{k=N_{1}}^{N_{2}}\right)=\mathfrak{F}\!\left(\left\{ x_{k}\right\} _{k=N_{1}}^{n}\right)\mathfrak{F}\!\left(\left\{ x_{k}\right\} _{k=n+1}^{N_{2}}\right)-x_{n}x_{n+1}\,\mathfrak{F}\!\left(\left\{ x_{k}\right\} _{k=N_{1}}^{n-1}\right)\mathfrak{F}\!\left(\left\{ x_{k}\right\} _{k=n+2}^{N_{2}}\right)\label{eq:F_recur_general}\end{equation}
provided $n\in\mathbb{Z}$ satisfies $N_{1}\leq n<N_{2}$.

This extension also opens the way for applications of the function
$\mathfrak{F}$ to bilateral difference equations. Suppose that sequences
$\left\{ w_{n}\right\} _{n=-\infty}^{\infty}$ and $\left\{ \zeta_{n}\right\} _{n=-\infty}^{\infty}$
are such that $w_{n}\neq0$, $\zeta_{n}\neq0$ for all $n$ and

\[
\sum_{k=-\infty}^{\infty}\left|\frac{w_{k}^{\,2}}{\zeta_{k}\zeta_{k+1}}\right|<\infty.\]
Consider the difference equation\begin{equation}
w_{n}u_{n+1}-\zeta_{n}u_{n}+w_{n-1}u_{n-1}=0,\ n\in\mathbb{Z}.\label{eq:diff_eq_inf}\end{equation}
Define the sequence $\left\{ \mathcal{P}_{n}\right\} _{n\in\mathbb{Z}}$
by $\mathcal{P}_{0}=1$ and $\mathcal{P}_{n+1}=(w_{n}/\zeta_{n+1})\mathcal{P}_{n}$
for all $n$. Hence

\[
\mathcal{P}_{n}=\prod_{k=1}^{n}\frac{w_{k-1}}{\zeta_{k}}\text{ }\ \text{for}\ n>0,\ \mathcal{P}_{0}=1,\ \mathcal{P}_{n}=\prod_{k=n+1}^{0}\frac{\zeta_{k}}{w_{k-1}}\text{ }\ \text{for}\ n<0.\]
The sequence $\{\gamma_{n}\}_{n\in\mathbb{Z}}$ is again defined so
that $\gamma_{1}=1$ and $\gamma_{n}\gamma_{n+1}=w_{n}$ for all $n\in\mathbb{Z}$.
Hence\[
\gamma_{2k-1}=\prod_{j=1}^{k-1}\frac{w_{2j}}{w_{2j-1}}\,,\mbox{ }\gamma_{2k}=w_{1}\prod_{j=1}^{k-1}\frac{w_{2j+1}}{w_{2j}}\,,\mbox{ }\text{for}\ k=1,2,3,\ldots,\]
and\[
\gamma_{2k-1}=\prod_{j=k}^{0}\frac{w_{2j-1}}{w_{2j}}\,,\mbox{ }\gamma_{2k}=w_{1}\prod_{j=k}^{0}\frac{w_{2j}}{w_{2j+1}}\,,\mbox{ }\text{for}\ k=0,-1,-2,\ldots.\]
Then the sequences $\left\{ f_{n}\right\} _{n\in\mathbb{Z}}$ and
$\left\{ g_{n}\right\} _{n\in\mathbb{Z}}$,

\begin{equation}
f_{n}=\mathcal{P}_{n}\,\mathfrak{F}\!\left(\left\{ \frac{\gamma_{k}^{\,2}}{\zeta_{k}}\right\} _{k=n+1}^{\infty}\right)\!,\ g_{n}=\frac{1}{w_{n-1}\mathcal{P}_{n-1}}\,\mathfrak{F}\!\left(\left\{ \frac{\gamma_{k}^{\,2}}{\zeta_{k}}\right\} _{k=-\infty}^{n-1}\right)\!,\label{eq:sols_diff_eq_inf}\end{equation}
represent two solutions of the bilateral difference equation (\ref{eq:diff_eq_inf}).

For two solutions $u=\left\{ u_{n}\right\} _{n\in\mathbb{Z}}$ and
$v=\left\{ v_{n}\right\} _{n\in\mathbb{Z}}$ of (\ref{eq:diff_eq_inf})
the Wronskian is introduced as\[
\mathcal{W}(u,v)=w_{n}\left(u_{n}v_{n+1}-u_{n+1}v_{n}\right).\]
As is well known, this is a constant independent of the index $n$.
Moreover, two solutions are linearly dependent iff their Wronskian
vanishes. For the solutions $f$ and $g$ given in (\ref{eq:sols_diff_eq_inf})
one can use (\ref{eq:F_recur_general}) to evaluate their Wronskian
getting

\[
\mathcal{W}(f,g)=\,\mathfrak{F}\!\left(\left\{ \frac{\gamma_{n}^{\,2}}{\zeta_{n}}\right\} _{n=-\infty}^{\infty}\right)\!.\]

One may also consider an application of a discrete analogue of Green's
formula to the solutions (\ref{eq:sols_diff_eq_inf}) \cite{Akhiezer}.
In general, suppose that sequences $\left\{ u_{n}\right\} _{n=0}^{\infty}$
and $\left\{ v_{n}\right\} _{n=0}^{\infty}$ solve respectively the
difference equations\begin{equation}
w_{n}u_{n+1}-\zeta_{n}^{(1)}u_{n}+w_{n-1}u_{n-1}=0,\ w_{n}v_{n+1}-\zeta_{n}^{(2)}v_{n}+w_{n-1}v_{n-1}=0,\ n\in\mathbb{N}.\label{eq:diff_eqs_12}\end{equation}
In that case it is well known and easy to check that\begin{equation}
\sum_{j=1}^{n}\left(\zeta_{j}^{(1)}-\zeta_{j}^{(2)}\right)u_{j}v_{j}=w_{0}\left(u_{0}v_{1}-u_{1}v_{0}\right)-w_{n}\left(u_{n}v_{n+1}-u_{n+1}v_{n}\right).\label{eq:sum_1n_uv}\end{equation}

\begin{proposition} \label{prop:sum_f1j_f2j} Suppose that the convergence
condition\[
\sum_{k=1}^{\infty}\left|\frac{w_{k}^{\,2}}{\zeta_{k}\zeta_{k+1}}\right|<\infty\]
is satisfied for the both difference equations in (\ref{eq:diff_eqs_12}).
Moreover, assume that\[
\sup_{n\geq1}\left|\frac{w_{n}^{\,2}}{\zeta_{n}^{(1)}\zeta_{n+1}^{(2)}}\right|<\infty\ \ \text{and}\ \ \sup_{n\geq1}\left|\frac{w_{n}^{\,2}}{\zeta_{n}^{(2)}\zeta_{n+1}^{(1)}}\right|<\infty.\]
Then the corresponding solutions $f^{(1)}$, $f^{(2)}$ from (\ref{eq:sols_diff_eq_inf})
fulfill\begin{equation}
\sum_{j=1}^{\infty}\left(\zeta_{j}^{(1)}-\zeta_{j}^{(2)}\right)f_{j}^{(1)}f_{j}^{(2)}=w_{0}\left(f_{0}^{(1)}f_{1}^{(2)}-f_{1}^{(1)}f_{0}^{(2)}\right).\label{eq:sum_inf_f1f2}\end{equation}
\end{proposition}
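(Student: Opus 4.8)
The plan is to obtain (\ref{eq:sum_inf_f1f2}) by letting $n\to\infty$ in the finite discrete Green identity (\ref{eq:sum_1n_uv}) applied to $u=f^{(1)}$ and $v=f^{(2)}$, the two solutions supplied by (\ref{eq:sols_diff_eq_inf}) (here $f^{(i)}$ is built from the sequence $\zeta^{(i)}$, while the auxiliary sequence $\gamma$ depends only on $w$ and is common to both). For every $n$ the identity reads
\[
\sum_{j=1}^{n}\bigl(\zeta_{j}^{(1)}-\zeta_{j}^{(2)}\bigr)f_{j}^{(1)}f_{j}^{(2)}=w_{0}\bigl(f_{0}^{(1)}f_{1}^{(2)}-f_{1}^{(1)}f_{0}^{(2)}\bigr)-w_{n}\bigl(f_{n}^{(1)}f_{n+1}^{(2)}-f_{n+1}^{(1)}f_{n}^{(2)}\bigr),
\]
so the whole proposition reduces to showing that the boundary term $w_{n}(f_{n}^{(1)}f_{n+1}^{(2)}-f_{n+1}^{(1)}f_{n}^{(2)})$ tends to $0$; the convergence of the left-hand partial sums to the asserted constant is then automatic.

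To handle the boundary term I would treat its two half-products separately and substitute $f_{n}^{(i)}=\mathcal{P}_{n}^{(i)}A_{n}^{(i)}$, where $\mathcal{P}_{n}^{(i)}=\prod_{k=1}^{n}w_{k-1}/\zeta_{k}^{(i)}$ and $A_{n}^{(i)}=\mathfrak{F}(\{\gamma_{k}^{2}/\zeta_{k}^{(i)}\}_{k=n+1}^{\infty})$. By the extended version of the limit (\ref{eq:lim_F_T_n}) one has $A_{n}^{(i)}\to1$, so these factors are bounded and the entire weight is carried by $w_{n}\mathcal{P}_{n}^{(1)}\mathcal{P}_{n+1}^{(2)}$. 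The key step is then the bookkeeping identity
\[
\bigl|w_{n}f_{n}^{(1)}f_{n+1}^{(2)}\bigr|^{2}=\frac{|w_{0}|^{4}}{|\zeta_{1}^{(1)}\zeta_{1}^{(2)}|}\left|\frac{w_{n}^{2}}{\zeta_{n}^{(1)}\zeta_{n+1}^{(2)}}\right|\left(\prod_{k=1}^{n-1}\left|\frac{w_{k}^{2}}{\zeta_{k}^{(1)}\zeta_{k+1}^{(1)}}\right|\right)\left(\prod_{k=1}^{n}\left|\frac{w_{k}^{2}}{\zeta_{k}^{(2)}\zeta_{k+1}^{(2)}}\right|\right)\bigl|A_{n}^{(1)}A_{n+1}^{(2)}\bigr|^{2},
\]
in which the square of $w_{n}\mathcal{P}_{n}^{(1)}\mathcal{P}_{n+1}^{(2)}$ has been reorganized into exactly one mixed factor together with the two genuine convergence factors.

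At this point the hypotheses act precisely where intended: the first supremum assumption bounds the mixed factor $|w_{n}^{2}/(\zeta_{n}^{(1)}\zeta_{n+1}^{(2)})|$ uniformly in $n$, whereas the two convergence conditions guarantee that the general terms of the two remaining products are summable, hence tend to $0$, so that each product, and thus the whole right-hand side, tends to $0$. The symmetric reorganization of $w_{n}f_{n+1}^{(1)}f_{n}^{(2)}$ uses the second supremum hypothesis in the same way. Consequently both half-products vanish in the limit, the boundary term vanishes, and (\ref{eq:sum_inf_f1f2}) follows.

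I expect the sole genuine obstacle to be the displayed factorization. A priori the natural quantity produced by $w_{n}\mathcal{P}_{n}^{(1)}\mathcal{P}_{n+1}^{(2)}$ is the product $\prod_{k=1}^{n}w_{k}^{2}/(\zeta_{k}^{(1)}\zeta_{k+1}^{(2)})$, which need not be small merely because its factors are bounded; only after squaring and regrouping does one see that all but one of the mixed ratios recombine into the summable quantities $|w_{k}^{2}/(\zeta_{k}^{(i)}\zeta_{k+1}^{(i)})|$. Verifying that this regrouping is exact, with one bounded mixed ratio, two summable products, and the boundary powers of $w_{0}$ and $\zeta_{1}^{(i)}$ matched correctly, is the technical heart of the argument and the place where the two supremum conditions are indispensable.
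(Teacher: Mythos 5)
Your proposal is correct and follows essentially the same route as the paper: both reduce the claim via the finite Green identity (\ref{eq:sum_1n_uv}) to the vanishing of the boundary terms $w_{n}f_{n}^{(1)}f_{n+1}^{(2)}$ and $w_{n}f_{n+1}^{(1)}f_{n}^{(2)}$, bound the tail factors $\mathfrak{F}(\{\gamma_{k}^{2}/\zeta_{k}^{(i)}\}_{k=n+1}^{\infty})$ by (\ref{eq:F_ineq_exp}), and regroup the mixed product $w_{n}\mathcal{P}_{n}^{(1)}\mathcal{P}_{n+1}^{(2)}$ into the two intrinsic summable ratios plus a single mixed ratio controlled by the supremum hypotheses. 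The only (cosmetic) difference is that you square and redistribute exactly, while the paper works with square roots past a threshold $n_{0}$ where $|w_{k}|\leq\frac{1}{2}\sqrt{|\zeta_{k}^{(i)}\zeta_{k+1}^{(i)}|}$, obtaining the explicit geometric factor $2^{-2(n-n_{0})}$ --- the same eventual-smallness argument your summability step implicitly invokes.
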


\begin{proof} In view of (\ref{eq:sum_1n_uv}) it suffices to show
that \begin{equation}
\lim_{n\to\infty}w_{n}f_{n}^{(1)}f_{n+1}^{(2)}=\lim_{n\to\infty}w_{n}f_{n+1}^{(1)}f_{n}^{(2)}=0.\label{eq:lim_ff_0}\end{equation}
By the convergence assumption, for all $n>n_{0}$ one has\[
|w_{n}|\leq\frac{1}{2}\,\sqrt{|\zeta_{n}^{(1)}||\zeta_{n+1}^{(1)}|}\,,\ |w_{n}|\leq\frac{1}{2}\,\sqrt{|\zeta_{n}^{(2)}||\zeta_{n+1}^{(2)}|}\,.\]
Using (\ref{eq:F_ineq_exp}), after some straightforward manipulations
one gets the estimate\begin{eqnarray*}
\left|w_{n}f_{n}^{(1)}f_{n+1}^{(2)}\right| & \leq & 2^{-2(n-n_{0})}\,\exp\!\left(\sum_{k=1}^{\infty}\left|\frac{w_{k}^{\,2}}{\zeta_{k}^{(1)}\zeta_{k+1}^{(1)}}\right|+\left|\frac{w_{k}^{\,2}}{\zeta_{k}^{(2)}\zeta_{k+1}^{(2)}}\right|\right)\prod_{k=1}^{n_{0}}\left|\frac{w_{k-1}^{\,2}}{\zeta_{k}^{(1)}\zeta_{k}^{(2)}}\right|\\
\noalign{\smallskip} &  & \times\,|\zeta_{n_{0}}^{(1)}\zeta_{n_{0}}^{(2)}|^{1/2}\,\,\frac{|w_{n}|}{\left|\zeta_{n}^{(1)}\zeta_{n+1}^{(2)}\right|^{1/2}}\,.\end{eqnarray*}
This implies (\ref{eq:lim_ff_0}). \end{proof}

In the literature on Jacobi matrices one encounters a construction
of an infinite matrix associated with the bilateral difference equation
(\ref{eq:diff_eq_inf}) \cite[\S~1.1]{Teschl}, \cite[Theorem~1.2]{Gaurschi}.
Let us define the matrix $\mathfrak{J}$ with entries $\mathfrak{J}(m,n)$,
$m,n\in\mathbb{Z}$, so that for every fixed $m$, the sequence $u_{n}=\mathfrak{J}(m,n)$,
$n\in\mathbb{Z}$, solves (\ref{eq:diff_eq_inf}) with the initial
conditions $\mathfrak{J}(m,m)=0$, $\mathfrak{J}(m,m+1)=1/w_{m}$.

Using (\ref{eq:calF_reccur_reverse}) one verifies that, for $m<n$,\[
\mathfrak{J}(m,n)=\frac{1}{w_{m}}\left(\prod_{j=m+1}^{n-1}\frac{\zeta_{j}}{w_{j}}\right)\mathfrak{F}\!\left(\frac{\gamma_{m+1}^{\,2}}{\zeta_{m+1}},\frac{\gamma_{m+2}^{\,2}}{\zeta_{m+2}},\ldots,\frac{\gamma_{n-1}^{\,2}}{\zeta_{n-1}}\right)\!.\]
Moreover, it is quite obvious that, for all $m,n\in\mathbb{Z}$,\[
\mathfrak{J}(m,n)=\frac{1}{\mathcal{W}(u,v)}\left(u_{m}v_{n}-v_{m}u_{n}\right),\]
where $\left\{ u_{n}\right\} $, $\left\{ v_{n}\right\} $ is any
couple of independent solutions of (\ref{eq:diff_eq_inf}). Hence
the matrix $\mathfrak{J}$ is antisymmetric. It also follows that,
$\forall m,n,k,\ell\in\mathbb{Z}$,\[
\mathfrak{J}(m,k)\mathfrak{J}(n,\ell)-\mathfrak{J}(m,\ell)\mathfrak{J}(n,k)=\mathfrak{J}(m,n)\mathfrak{J}(k,\ell).\]

\begin{example} As an example let us again have a look at the particular
case where $w_{n}=w$, $\zeta_{n}=\nu+n$ for all $n\in\mathbb{Z}$
and some $w,\nu\in\mathbb{C}$, $w\neq0$, $\nu\notin\mathbb{Z}$.
One finds, with the aid of (\ref{eq:BesselJ_rel_F}), that the solutions
(\ref{eq:sols_diff_eq_inf}) now read\[
f_{n}=\Gamma(\nu+1)\, w^{-\nu}J_{\nu+n}(2w),\ g_{n}=\frac{(-1)^{n}\pi}{\sin(\pi\nu)\Gamma(\nu+1)}\, w^{\nu}J_{-\nu-n}(2w).\]
Hence the Wronskian equals\[
\mathcal{W}(f,g)=\frac{\pi w}{\sin(\pi\nu)}\left(-J_{\nu}(2w)J_{-\nu-1}(2w)-J_{\nu+1}(2w)J_{-\nu}(2w)\right)=\,\mathfrak{F}\!\left(\left\{ \frac{w}{\nu+n}\right\} _{n=-\infty}^{\infty}\right)\!.\]
Recalling once more (\ref{eq:BesselJ_rel_F}) we note that the RHS
equals

\[
\lim_{N\to\infty}\mathfrak{F}\!\left(\left\{ \frac{w}{\nu-N+n}\right\} _{n=1}^{\infty}\right)=\lim_{N\to\infty}\,\sum_{n=0}^{\infty}\frac{(-1)^{n}}{n!}\,\frac{\Gamma(\nu-N+1)}{\Gamma(\nu-N+n+1)}\, w^{2n}=1.\]
Thus one gets the well known relation \cite[Eq.~9.1.15]{AbramowitzStegun}

\begin{equation}
J_{\nu+1}(2w)J_{-\nu}(2w)+J_{\nu}(2w)J_{-\nu-1}(2w)=-\frac{\sin(\pi\nu)}{\pi w}\,.\label{eq:Jnu_Jnu_eq_sin}\end{equation}

Concerning the matrix $\mathfrak{J}$, this particular choice brings
us to the case discussed in \cite[Proposition~22]{StampachStovicek}.
Then the Bessel functions $Y_{n+\nu}(2w)$ and $J_{n+\nu}(2w)$, depending
on the index $n\in\mathbb{Z}$, represent other two linearly independent
solutions of (\ref{eq:diff_eq_inf}). Since \cite[Eq.~9.1.16]{AbramowitzStegun}\[
J_{\nu+1}(z)Y_{\nu}(z)-J_{\nu}(z)Y_{\nu+1}(z)=\frac{2}{\pi z}\]
one finds that\[
\mathfrak{J}(m,n)=\pi\left(Y_{m+\nu}(2w)J_{n+\nu}(2w)-J_{m+\nu}(2w)Y_{n+\nu}(2w)\right).\]
Moreover, for $\sigma=m+\mu$ and $k=n-m>0$ one has\[
J_{\sigma+k}(2w)Y_{\sigma}(2w)-J_{\sigma}(2w)Y_{\sigma+k}(2w)=\frac{\Gamma(\sigma+k)}{\pi w^{k}\,\Gamma(\sigma+1)}\,\,\mathfrak{F}\!\left(\left\{ \frac{w}{\sigma+j}\right\} _{j=1}^{k-1}\right)\!.\]

Finally, putting $\zeta_{n}^{(1)}=\mu+n$, $\zeta_{n}^{(2)}=\nu+n$
and $w_{n}=w$, $\forall n\in\mathbb{N}$, in equation (\ref{eq:diff_eqs_12}),
one verifies that (\ref{eq:sum_inf_f1f2}) holds true and reveals
this way once more the identity (\ref{eq:sum_Bessel_mn}). \end{example}

\section{A class of Jacobi operators with point spectra \label{sec:spec_charfce}}

\subsection{The characteristic function \label{sec:pre_charfce}}

Being inspired by Proposition~\ref{prop:charfce_finJ} and notably
by equation (\ref{eq:char_pol_general}), we introduce the (renormalized)
characteristic function associated with a Jacobi matrix $\mathcal{J}$
as\begin{equation}
F_{\mathcal{J}}(z):=\mathfrak{F}\!\left(\left\{ \frac{\gamma_{n}^{\,2}}{\lambda_{n}-z}\right\} _{n=1}^{\infty}\right)\!.\label{eq:def_FJ_symm}\end{equation}
It is treated as a complex function of a complex variable $z$ and
is well defined provided the sequence in the argument of $\mathfrak{F}$
belongs to the domain $D$. Let us show that this is guaranteed under
the assumption that there exists $z_{0}\in\mathbb{C}$ such that\begin{equation}
\sum_{n=1}^{\infty}\left|\frac{w_{n}^{\,2}}{(\lambda_{n}-z_{0})(\lambda_{n+1}-z_{0})}\right|<\infty.\label{eq:assum_sum_w}\end{equation}

For $\lambda=\{\lambda_{n}\}_{n=1}^{\infty}$ let us denote\[
\mathbb{C}_{0}^{\lambda}:=\mathbb{C}\setminus\overline{\{\lambda_{n};\, n\in\mathbb{N}\}}.\]
Clearly,\[
\overline{\{\lambda_{n};\, n\in\mathbb{N}\}}=\{\lambda_{n};\, n\in\mathbb{N}\}\cup\der(\lambda)\]
where $\der(\lambda)$ stands for the set of all finite accumulation
points of the sequence $\lambda$ (i.e., $\der(\lambda)$ is equal
to the set of limit points of all possible convergent subsequences
of $\lambda$).

\begin{lemma} \label{lem:convlem_F} Let condition (\ref{eq:assum_sum_w})
be fulfilled for at least one $z_{0}\in\mathbb{C}_{0}^{\lambda}$.
Then the series \begin{equation}
\sum_{n=1}^{\infty}\frac{w_{n}^{\,2}}{(\lambda_{n}-z)(\lambda_{n+1}-z)}\label{eq:series_vw_lbd_z}\end{equation}
converges absolutely and locally uniformly in $z$ on $\mathbb{C}_{0}^{\lambda}$.
Moreover, \begin{equation}
\forall z\in\mathbb{C}_{0}^{\lambda},\quad\lim_{n\rightarrow\infty}\mathfrak{F\!}\left(\left\{ \frac{\gamma_{k}^{\,2}}{\lambda_{k}-z}\right\} _{k=1}^{n}\right)=F_{\mathcal{J}}(z),\label{eq:lim_calF_FJ}\end{equation}
and the convergence is locally uniform on $\mathbb{C}_{0}^{\lambda}$.
Consequently, $F_{\mathcal{J}}(z)$ is a well defined analytic function
on $\mathbb{C}_{0}^{\lambda}$. \end{lemma}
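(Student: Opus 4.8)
The plan is to establish the three assertions of Lemma~\ref{lem:convlem_F} in order, deriving each from the previous one, with the bulk of the work concentrated in the first (convergence) step.

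First I would prove absolute and locally uniform convergence of the series \eqref{eq:series_vw_lbd_z} on $\mathbb{C}_0^{\lambda}$. Recall that $\gamma_n^{\,2}\gamma_{n+1}^{\,2}=w_n^{\,2}$, so the general term of the sequence feeding into $\mathfrak{F}$ satisfies
\[
\frac{\gamma_n^{\,2}}{\lambda_n-z}\cdot\frac{\gamma_{n+1}^{\,2}}{\lambda_{n+1}-z}=\frac{w_n^{\,2}}{(\lambda_n-z)(\lambda_{n+1}-z)},
\]
which is precisely the term appearing in both \eqref{eq:assum_sum_w} and \eqref{eq:series_vw_lbd_z}. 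The idea is to compare the term at a general $z$ with the term at the given $z_0$. For a fixed compact set $K\subset\mathbb{C}_0^{\lambda}$, one has $\dist(K,\overline{\{\lambda_n\}})=:\delta>0$, so $|\lambda_n-z|\ge\delta$ uniformly in $n$ and in $z\in K$. Writing the ratio of the $z$-term to the $z_0$-term as
\[
\frac{(\lambda_n-z_0)(\lambda_{n+1}-z_0)}{(\lambda_n-z)(\lambda_{n+1}-z)},
\]
I would bound $|\lambda_n-z_0|\le|\lambda_n-z|+|z-z_0|\le|\lambda_n-z|+C_K$ where $C_K=\sup_{z\in K}|z-z_0|$, and use $|\lambda_n-z|\ge\delta$ to conclude that this ratio is bounded by a constant $M_K$ depending only on $K$, $\delta$, and $C_K$, but \emph{uniformly} in $n$ and in $z\in K$. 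Hence the term at $z$ is majorized by $M_K$ times the summable term at $z_0$, which gives a uniform summable majorant on $K$ and establishes absolute and locally uniform convergence by the Weierstrass $M$-test. In particular the argument sequence $\{\gamma_k^{\,2}/(\lambda_k-z)\}_{k=1}^\infty$ lies in $D$ for every $z\in\mathbb{C}_0^{\lambda}$, so $F_{\mathcal{J}}(z)$ is well defined.

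Next I would prove \eqref{eq:lim_calF_FJ}. This follows directly from the estimate \eqref{eq:estim_F-Ffin} of Lemma~\ref{lemm:F_fin_lim}: applying it to $x=\{\gamma_k^{\,2}/(\lambda_k-z)\}$ yields
\[
\left|F_{\mathcal{J}}(z)-\mathfrak{F}\!\left(\left\{\frac{\gamma_k^{\,2}}{\lambda_k-z}\right\}_{k=1}^{n}\right)\right|\le 2\exp\!\left(2\sum_{k=1}^\infty\left|\frac{w_k^{\,2}}{(\lambda_k-z)(\lambda_{k+1}-z)}\right|\right)\sum_{k=n}^\infty\left|\frac{w_k^{\,2}}{(\lambda_k-z)(\lambda_{k+1}-z)}\right|.
\]
On a compact $K$ the exponential prefactor is bounded by the step-one majorant, and the tail sum $\sum_{k=n}^\infty(\cdots)$ tends to $0$ uniformly on $K$ because the full series converges uniformly there. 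This gives locally uniform convergence in \eqref{eq:lim_calF_FJ}.

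Finally, analyticity of $F_{\mathcal{J}}$ is immediate: each truncated approximant $\mathfrak{F}(\{\gamma_k^{\,2}/(\lambda_k-z)\}_{k=1}^n)$ is a rational function of $z$ with poles only at the finitely many points $\lambda_1,\dots,\lambda_n$ (after the cancellations noted in Proposition~\ref{prop:charfce_finJ}), hence analytic on $\mathbb{C}_0^{\lambda}$; a locally uniform limit of analytic functions is analytic by Weierstrass's theorem. The main obstacle is really the comparison estimate in the first step: the entire lemma hinges on transferring summability from the single point $z_0$ to a uniform majorant over each compact set, and the care there lies in controlling $|\lambda_n-z|$ from below uniformly (using the definition of $\mathbb{C}_0^{\lambda}$ as the complement of the \emph{closure} of the range of $\lambda$, which is exactly what guarantees $\delta>0$ on compacts) rather than merely for each individual $z$.
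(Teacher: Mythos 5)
Your proposal is correct and follows essentially the same route as the paper: the same comparison of the general term at $z$ with the term at $z_{0}$, bounding the ratio uniformly in $n$ and $z$ over compact subsets of $\mathbb{C}_{0}^{\lambda}$ (using that the distance from a compact set to $\overline{\{\lambda_{n}\}}$ is positive), followed by a tail estimate for the truncated $\mathfrak{F}$'s. The only cosmetic difference is that you cite the packaged estimate (\ref{eq:estim_F-Ffin}) directly, where the paper re-derives the equivalent telescoping bound from (\ref{eq:calF_reccur_reverse}) and (\ref{eq:F_ineq_exp}); your explicit appeal to Weierstrass's theorem for the final analyticity claim merely spells out what the paper leaves implicit.
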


\begin{proof} Let $K\subset\mathbb{C}_{0}^{\lambda}$ be a compact
subset. Then the ratio\[
\frac{|\lambda_{n}-z_{0}|}{|\lambda_{n}-z|}\leq1+\frac{|z-z_{0}|}{|\lambda_{n}-z|}\]
admits an upper bound, uniform in $z\in K$ and $n\in\mathbb{N}$.
The uniform convergence on $K$ of the series (\ref{eq:series_vw_lbd_z})
thus becomes obvious.

The limit (\ref{eq:lim_calF_FJ}) follows from Lemma~\ref{lemm:F_fin_lim}.
Moreover, using (\ref{eq:lim_calF_FJ}) and also (\ref{eq:calF_reccur_reverse}),
(\ref{eq:F_ineq_exp}) one has\begin{eqnarray*}
 &  & \left|\mathfrak{F}\!\left(\left\{ \frac{\gamma_{k}^{\,2}}{\lambda_{k}-z}\right\} _{k=1}^{n}\right)-F_{\mathcal{J}}(z)\right|\leq\sum_{l=n}^{\infty}\left|\mathfrak{F}\!\left(\left\{ \frac{\gamma_{k}^{\,2}}{\lambda_{k}-z}\right\} _{k=1}^{l}\right)-\mathfrak{F}\!\left(\left\{ \frac{\gamma_{k}^{\,2}}{\lambda_{k}-z}\right\} _{k=1}^{l+1}\right)\right|\\
 &  & \leq\sum_{l=n}^{\infty}\left|\frac{w_{l}^{\,2}}{(\lambda_{l}-z)(\lambda_{l+1}-z)}\right|\exp\!\left(\sum_{k=1}^{\infty}\left|\frac{w_{k}^{\,2}}{(\lambda_{k}-z)(\lambda_{k+1}-z)}\right|\right)\!.\end{eqnarray*}
From this estimate and the locally uniform convergence of the series
(\ref{eq:series_vw_lbd_z}) one deduces the locally uniform convergence
of the sequence of functions (\ref{eq:lim_calF_FJ}). \end{proof}

By a closer inspection one finds that, under the assumptions of Lemma~\ref{lem:convlem_F},
the function $F_{\mathcal{J}}(z)$ is meromorphic on $\mathbb{C}\setminus\der(\lambda)$
with poles at the points $z=\lambda_{n}$ for some $n\in\mathbb{N}$
(not belonging to $\der(\lambda)$, however). For any such $z$, the
order of the pole is less than or equal to $r(z)$ where \[
r(z):=\sum_{k=1}^{\infty}\delta_{z,\lambda_{k}}\]
is the number of members of the sequence $\lambda$ coinciding with
$z$ (hence $r(z)=0$ for $z\in\mathbb{C}_{0}^{\lambda}$). To see
this, suppose that $r(z)\geq1$ and let $M$ be the maximal index
such that $\lambda_{M}=z$. Using (\ref{eq:F_T_recur_k}) one derives
that, for $u\in\mathbb{C}_{0}^{\lambda}$,\begin{eqnarray*}
 &  & F_{\mathcal{J}}(u)\,=\,\mathfrak{F}\!\left(\left\{ \frac{\gamma_{n}^{\,2}}{\lambda_{n}-u}\right\} _{n=1}^{M}\right)\mathfrak{F}\!\left(\left\{ \frac{\gamma_{n}^{\,2}}{\lambda_{n}-u}\right\} _{n=M+1}^{\infty}\right)\\
 &  & \phantom{F_{\mathcal{J}}(u)=}\,+\,\mathfrak{F}\!\left(\left\{ \frac{\gamma_{n}^{\,2}}{\lambda_{n}-u}\right\} _{n=1}^{M-1}\right)\frac{\gamma_{M}^{\,2}\gamma_{M+1}^{\,2}}{(u-z)(\lambda_{M+1}-u)}\,\mathfrak{F}\!\left(\left\{ \frac{\gamma_{n}^{\,2}}{\lambda_{n}-u}\right\} _{n=M+2}^{\infty}\right)\!.\end{eqnarray*}
The RHS clearly has a pole at the point $u=z$ of order at most $r(z)$.

\subsection{The Jacobi operator $J$ \label{subsec:operatorJ}}

Our goal is to investigate spectral properties of a closed operator
$J$ on $\ell^{2}(\mathbb{N})$ whose matrix in the standard basis
coincides with $\mathcal{J}$. Provided the Jacobi matrix does not
determine a bounded operator, however, there need not be a unique
way how to introduce $J$. But among all admissible operators one
may distinguish two particular cases which may respectively be regarded,
in a natural way, as the minimal and the maximal operator with the
required properties; see, for instance, \cite{Beckerman}.

\begin{definition} \label{def:Jmax_Jmin} The operator $J_{\text{max}}$
is defined so that\[
\Dom(J_{\text{max}})=\{y\in\ell^{2}(\mathbb{N});\,\mathcal{J}y\in\ell^{2}(\mathbb{N})\},\]
and one sets $J_{\text{max}}y=\mathcal{J}y$, $\forall y\in\Dom J_{\text{max}}$.
Here and in what follows $\mathcal{J}y$ is understood as the formal
matrix product while treating $y$ as a column vector. To define the
operator $J_{\text{min}}$ one first introduces the operator $\dot{J}$
so that $\Dom(\dot{J})$ is the linear hull of the standard basis,
and again $\dot{J}y=\mathcal{J}y$ for all $y\in\Dom(\dot{J})$. $\dot{J}$
is known to be closable \cite{Beckerman}, and $J_{\text{min}}$ is
defined as the closure of $\dot{J}$. \end{definition}

One has the following relations between the operators $J_{\text{min}}$,
$J_{\text{max}}$ and their adjoint operators \cite[Lemma~2.1]{Beckerman}.
Let $\mathcal{J}^{H}$ designates the Jacobi matrix obtained from
$\mathcal{J}$ by taking the complex conjugate of each entry. Then
$J_{\text{min}}^{\,\,\ast}=J_{\text{max}}^{H}$, $J_{\text{max}}^{\,\,\ast}=J_{\text{min}}^{H}$.
In particular, the maximal operator $J_{\text{max}}$ is a closed
extension of $J_{\text{min}}$. It is even true that any closed operator
$J$ whose domain contains the standard basis and whose matrix in
this basis equals $\mathcal{J}$ fulfills $J_{\text{min}}\subset J\subset J_{\text{max}}$.
Moreover, if $\mathcal{J}$ is Hermitian, i.e. $\mathcal{J}=\mathcal{J}^{H}$
(which means nothing but $\mathcal{J}$ is real), then $J_{\text{min}}^{\,\,\ast}=J_{\text{max}}\supset J_{\text{min}}$.
Hence $J_{\text{min}}$ is symmetric with the deficiency indices either
$(0,0)$ or $(1,1)$.

We are primarily interested in the situation where $J_{\text{min}}=J_{\text{max}}$
since then there exists a unique closed operator $J$ defined by the
Jacobi matrix $\mathcal{J}$, and it turns out that the spectrum of
$J$ is determined in a well defined sense by the characteristic function
$F_{\mathcal{J}}(z)$. If this happens $\mathcal{J}$ is sometimes
called proper \cite{Beckerman}.

Let us recall more details on this property. We remind the reader
that the orthogonal polynomials of the first kind, $p_{n}(z)$, are
defined by the recurrence\[
w_{n-1}p_{n-1}(z)+\lambda_{n}p_{n}(z)+w_{n}p_{n+1}(z)=z\, p_{n}(z),\quad n=1,2,3,\dots,\]
with the initial conditions $p_{0}(z)=1$, $p_{1}(z)=(z-\lambda_{1})/w_{1}$.
The orthogonal polynomials of the second kind, $q_{n}(z)$, obey the
same recurrence but the initial conditions are $q_{0}(z)=0$, $q_{1}(z)=1/w_{1}$;
see \cite{Akhiezer,Chihara}. It is not difficult to verify that these
polynomials are expressible in terms of the function $\mathfrak{F}$
as follows:\[
p_{n}(z)=\left(\prod_{k=1}^{n}\,\frac{z-\lambda_{k}}{w_{k}}\right)\mathfrak{F}\!\left(\left\{ \frac{\gamma_{l}^{\,2}}{\lambda_{l}-z}\right\} _{l=1}^{n}\right),\quad n=0,1,2\dots,\]
and\[
q_{n}(z)=\frac{1}{w_{1}}\left(\prod_{k=2}^{n}\,\frac{z-\lambda_{k}}{w_{k}}\right)\mathfrak{F}\!\left(\left\{ \frac{\gamma_{l}^{\,2}}{\lambda_{l}-z}\right\} _{l=2}^{n}\right),\quad n=1,2,3\dots.\]

The complex Jacobi matrix $\mathcal{J}$ is called determinate if
at least one of the sequences $p(0)=\{p_{n}(0)\}_{n=0}^{\infty}$
or $q(0)=\{q_{n}(0)\}_{n=0}^{\infty}$ is not an element of $\ell^{2}(\mathbb{Z}_{+})$.
For real Jacobi matrices there exits a parallel terminology. Instead
of determinate one calls $\mathcal{J}$ limit point at $+\infty$,
and instead of indeterminate one calls $\mathcal{J}$ limit circle
at $+\infty$, see \cite[p.~48]{Teschl}. According to \cite[Theorem~22.1]{Wall},
$\mathcal{J}$ is indeterminate if both $p(z)$ and $q(z)$ are elements
of $\ell^{2}$ for at least one $z\in\mathbb{C}$, and in this case
they are elements of $\ell^{2}$ for all $z\in\mathbb{C}$. For a
real Jacobi matrix $\mathcal{J}$ one can prove that it is proper
if and only if it is determinate (or, in another terminology, limit
point), see \cite[pp.~138-141]{Akhiezer} or \cite[Lemma~2.16]{Teschl}.

For complex Jacobi matrices one can also specify assumptions under
which $J_{\text{min}}=J_{\text{max}}$. In what follows, $\rho(A)$
designates the resolvent set of a closed operator $A$. Concerning
the essential spectrum, one observes that $\spec_{ess}(J_{\text{min}})=\spec_{ess}(J_{\text{max}})$
\cite[Eq.~2.10]{Beckerman}. Hence if $\rho(J_{\text{max}})\neq\emptyset$
then $\spec_{ess}(J_{\text{min}})\neq\mathbb{C}$. Moreover, in that
case $\mathcal{J}$ is determinate \cite[Theorem~2.11~(a)]{Beckerman}
and proper \cite[Theorem~2.6~(a)]{Beckerman}. This way one extracts
from \cite{Beckerman} the following result.

\begin{theorem} \label{thm:J_complex_proper} If $\rho(J_{\text{max}})\neq\emptyset$
then $J_{\text{min}}=J_{\text{max}}$. \end{theorem}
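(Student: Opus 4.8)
The plan is to reduce the claimed identity to the \emph{properness} of $\mathcal{J}$ and then to assemble the facts collected in the paragraph immediately preceding the statement. By definition, the assertion $J_{\text{min}}=J_{\text{max}}$ is exactly the statement that $\mathcal{J}$ is proper, so it suffices to verify properness under the single hypothesis $\rho(J_{\text{max}})\neq\emptyset$. The whole argument is thus a bookkeeping of three ingredients that are already in place: the coincidence of essential spectra, a determinacy criterion, and the equivalence of determinacy with properness in the complex setting.

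First I would translate the resolvent hypothesis into a statement about essential spectra. Since $\rho(J_{\text{max}})\neq\emptyset$ we have $\spec(J_{\text{max}})\neq\mathbb{C}$, and in particular $\spec_{ess}(J_{\text{max}})\neq\mathbb{C}$. Invoking the already-quoted identity $\spec_{ess}(J_{\text{min}})=\spec_{ess}(J_{\text{max}})$ from \cite[Eq.~2.10]{Beckerman} transfers this to $\spec_{ess}(J_{\text{min}})\neq\mathbb{C}$. Next I would chain the two cited Beckermann results: the non-fullness of the essential spectrum yields, by \cite[Theorem~2.11~(a)]{Beckerman}, that $\mathcal{J}$ is determinate, and then \cite[Theorem~2.6~(a)]{Beckerman} gives that a determinate complex Jacobi matrix is proper, i.e. $J_{\text{min}}=J_{\text{max}}$.

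To motivate why determinacy is the correct intermediate step, I would record the following heuristic (which could be promoted to a self-contained argument for the first implication): if $\mathcal{J}$ were indeterminate then, by Wall's theorem \cite[Theorem~22.1]{Wall}, both $p(z)$ and $q(z)$ would belong to $\ell^{2}$ for every $z\in\mathbb{C}$. Consequently every $z$ would supply an $\ell^{2}$ solution of the formal eigenvalue equation, so that $\spec(J_{\text{max}})=\mathbb{C}$, contradicting $\rho(J_{\text{max}})\neq\emptyset$. This makes the passage from the resolvent hypothesis to determinacy essentially transparent.

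The main obstacle is \emph{not} in this paper at all: the genuinely delicate implication is determinate $\Rightarrow$ proper in the complex, non-self-adjoint regime, where one cannot simply appeal to symmetry and deficiency-index/self-adjointness arguments to force $\Dom(J_{\text{max}})\subseteq\Dom(J_{\text{min}})$. That implication is precisely the content we extract from \cite[Theorem~2.6~(a)]{Beckerman}, and I would present the theorem as a synthesis of those cited results rather than reprove it; the contribution here is to observe that the verifiable hypothesis $\rho(J_{\text{max}})\neq\emptyset$ triggers the entire chain.
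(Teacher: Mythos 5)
Your proof is correct and takes essentially the same route as the paper, which likewise chains the identity $\spec_{ess}(J_{\text{min}})=\spec_{ess}(J_{\text{max}})$ with Beckermann's Theorem~2.11~(a) (determinacy) and Theorem~2.6~(a) (determinate implies proper) under the hypothesis $\rho(J_{\text{max}})\neq\emptyset$. Your supplementary Wall-theorem argument for the implication from $\rho(J_{\text{max}})\neq\emptyset$ to determinacy is sound (indeterminacy would make every $z$ an eigenvalue of $J_{\text{max}}$ since $p(z)\in\ell^{2}$ and $\mathcal{J}p(z)=zp(z)\in\ell^{2}$) but is not needed, as the cited results already cover that step.
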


\subsection{The spectrum and the zero set of the characteristic function}

Let us define \begin{equation}
\mathfrak{Z}(\mathcal{J}):=\left\{ z\in\mathbb{C}\setminus\der(\lambda);\,\lim_{u\to z}\,(u-z)^{r(z)}F_{\mathcal{J}}(u)=0\right\} .\label{eq:def_Z}\end{equation}
Of course, $\mathfrak{Z}(\mathcal{J})\cap\mathbb{C}_{0}^{\lambda}$
is nothing but the set of zeros of $F_{\mathcal{J}}(z)$. Further,
for $k\in\mathbb{Z}_{+}$ and $z\in\mathbb{C}\setminus\der(\lambda)$
we put \begin{equation}
\xi_{k}(z):=\lim_{u\to z}\,(u-z)^{r(z)}\left(\prod_{l=1}^{k}\,\frac{w_{l-1}}{u-\lambda_{l}}\right)\!\mathfrak{F}\!\left(\left\{ \frac{\gamma_{l}^{\,2}}{\lambda_{l}-u}\right\} _{l=k+1}^{\infty}\right)\!,\label{eq:def_xi_k}\end{equation}
where one sets $w_{0}=1.$ One observes that for $k\geq M$, where
$M=M_{z}$ is either the maximal index, if any, such that $z=\lambda_{M}$,
or $M=0$ otherwise,\begin{equation}
\xi_{k}(z)=\prod_{l=1}^{k}w_{l-1}\,\Bigg(\,\prod_{\substack{l=1\\
\lambda_{l}\neq z}
}^{k}(z-\lambda_{l})\Bigg)^{\!-1}\!\mathfrak{F}\!\left(\left\{ \frac{\gamma_{l}^{\,2}}{\lambda_{l}-z}\right\} _{l=k+1}^{\infty}\right)\!.\label{eq:def_xi_k_M}\end{equation}

\begin{proposition} \label{prop:ZinSpec} Let condition (\ref{eq:assum_sum_w})
be fulfilled for at least one $z_{0}\in\mathbb{C}_{0}^{\lambda}$.
If\[
\xi_{0}(z)\equiv\lim_{u\to z}\,(u-z)^{r(z)}F_{\mathcal{J}}(u)=0\]
for some $z\in\mathbb{C}\setminus\der(\lambda)$, then $z$ is an
eigenvalue of $J_{\text{max}}$ and\[
\xi(z):=\left(\xi_{1}(z),\xi_{2}(z),\xi_{3}(z),\ldots\right)\]
is the corresponding eigenvector. \end{proposition}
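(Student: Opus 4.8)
The plan is to realize $\xi(z)$ as a bona fide $\ell^{2}$-eigenvector by separating an \emph{algebraic} claim (the components solve the three-term recurrence) from an \emph{analytic} claim (they are square-summable). I work first at a point $u\in\mathbb{C}_{0}^{\lambda}$, where every factor below is given by an absolutely convergent series, and pass to the limit $u\to z$ only at the end. Accordingly set
\[
\eta_{k}(u)=\left(\prod_{l=1}^{k}\frac{w_{l-1}}{u-\lambda_{l}}\right)\Phi_{k}(u),\qquad\Phi_{k}(u)=\mathfrak{F}\!\left(\left\{\frac{\gamma_{l}^{\,2}}{\lambda_{l}-u}\right\}_{l=k+1}^{\infty}\right),
\]
so that $\xi_{k}(z)=\lim_{u\to z}(u-z)^{r(z)}\eta_{k}(u)$ by (\ref{eq:def_xi_k}). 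Since $\gamma_{l}\gamma_{l+1}=w_{l}$, the left-truncation rule (\ref{eq:F_T_recur}) applied to the shifted tail gives
\[
\Phi_{k-1}(u)=\Phi_{k}(u)-\frac{w_{k}^{\,2}}{(\lambda_{k}-u)(\lambda_{k+1}-u)}\,\Phi_{k+1}(u),
\]
which is the only identity needed for the recurrence.

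First I would verify, by the short telescoping computation this display triggers, that for every $k\geq2$ and every $u\in\mathbb{C}_{0}^{\lambda}$,
\[
w_{k-1}\eta_{k-1}(u)+\lambda_{k}\eta_{k}(u)+w_{k}\eta_{k+1}(u)=u\,\eta_{k}(u),
\]
i.e. $\eta(u)$ solves (\ref{eq:eigenvalue}) in every row but the first. In the first row the same substitution leaves a defect, which a direct computation identifies as $w_{1}\eta_{2}(u)-(u-\lambda_{1})\eta_{1}(u)=-\Phi_{0}(u)=-F_{\mathcal{J}}(u)$; this is exactly where the hypothesis enters. Multiplying all these identities by $(u-z)^{r(z)}$ and letting $u\to z$ turns the rows $k\geq2$ into $w_{k-1}\xi_{k-1}(z)+\lambda_{k}\xi_{k}(z)+w_{k}\xi_{k+1}(z)=z\,\xi_{k}(z)$, while the first row becomes $\lambda_{1}\xi_{1}(z)+w_{1}\xi_{2}(z)-z\,\xi_{1}(z)=-\lim_{u\to z}(u-z)^{r(z)}F_{\mathcal{J}}(u)=-\xi_{0}(z)=0$. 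Thus $\xi(z)$ solves the formal eigenvalue equation with parameter $z$, i.e. $\mathcal{J}\xi(z)=z\,\xi(z)$. To license the limit I must know each $\xi_{k}(z)$ is finite; this is the pole count already used after Lemma~\ref{lem:convlem_F}: the prefactor of $\eta_{k}$ contributes a pole at $u=z$ of order $\#\{l\le k:\lambda_{l}=z\}$ and the tail factor $\Phi_{k}$ one of order at most $\#\{l>k:\lambda_{l}=z\}$, summing to at most $r(z)$, so $(u-z)^{r(z)}\eta_{k}(u)$ stays bounded at $z$.

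It remains to show $\xi(z)\in\ell^{2}(\mathbb{N})$ and $\xi(z)\neq0$. As $z\notin\der(\lambda)$, $z$ equals only finitely many $\lambda_{n}$; let $M$ be the largest such index (or $M=0$). For $k>M$ formula (\ref{eq:def_xi_k_M}) presents $\xi_{k}(z)$ explicitly, and since $z\notin\overline{\{\lambda_{n}:n>M\}}$ there is $\delta>0$ with $|z-\lambda_{k}|\ge\delta$ for all $k>M$; by Lemma~\ref{lemm:F_fin_lim} the factors $\Phi_{k}(z)\to1$ and stay bounded. A telescoping of the remaining product (again via $\gamma_{l}\gamma_{l+1}=w_{l}$) reduces $|\xi_{k}(z)|^{2}$ to a bounded quantity times $\prod_{l=M+1}^{k-1}\frac{|w_{l}|^{2}}{|\lambda_{l}-z|\,|\lambda_{l+1}-z|}$. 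The convergence condition (\ref{eq:assum_sum_w}), transported from $z_{0}$ to $z$ exactly as in the proof of Lemma~\ref{lem:convlem_F} but restricted to the tail $l>M$ (which stays at distance $\ge\delta$ from $z$), makes these factors tend to $0$, so the partial products decay geometrically and $\sum_{k}|\xi_{k}(z)|^{2}<\infty$. The same asymptotics show $\xi_{k}(z)\neq0$ for all large $k$ (because $\Phi_{k}(z)\to1\neq0$), hence $\xi(z)\neq0$. Finally $\mathcal{J}\xi(z)=z\,\xi(z)\in\ell^{2}(\mathbb{N})$ places $\xi(z)$ in $\Dom(J_{\text{max}})$ with $J_{\text{max}}\xi(z)=z\,\xi(z)$, so $z$ is an eigenvalue of $J_{\text{max}}$ with eigenvector $\xi(z)$.

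The step I expect to be the real obstacle is the case $r(z)\geq1$: outside $\mathbb{C}_{0}^{\lambda}$ the individual components $\eta_{k}(u)$ with $k<M$ exist only after the limit, and the whole argument hinges on the pole orders never exceeding $r(z)$, so that every $\xi_{k}(z)$ is finite and the recurrence survives the passage $u\to z$. By comparison the square-summability is routine once the telescoping identity for $|\xi_{k}(z)|^{2}$ is in place.
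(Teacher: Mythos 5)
Your proof is correct and follows essentially the same route as the paper: the recurrence for $\xi(z)$ via the truncation rule (\ref{eq:F_T_recur}) applied to the tails (with $\gamma_k\gamma_{k+1}=w_k$), and square-summability by geometric decay of the telescoped products using (\ref{eq:assum_sum_w}) together with $\inf_{k>M}|z-\lambda_k|>0$. The only cosmetic differences are that you establish $\xi(z)\neq0$ by showing $\xi_k(z)\neq0$ for all large $k$ directly from $\Phi_k(z)\to1$, whereas the paper argues by contradiction from $\xi_1(z)=0$ (both resting on (\ref{eq:lim_F_T_n})), and that you make explicit the pole-order bookkeeping for the limit $u\to z$ when $r(z)\geq1$, which the paper treats implicitly.
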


\begin{proof} Using (\ref{eq:F_T_recur}) one verifies that if $\xi_{0}(z)=0$
then the column vector $\xi(z)$ solves the matrix equation $\mathcal{J}\xi(z)=z\xi(z)$.
To complete the proof one has to show that $\xi(z)$ does not vanish
and belongs to $\ell^{2}(\mathbb{N})$.

First, we claim that $\xi_{1}(z)\neq0$. Suppose, on the contrary,
that $\xi_{1}(z)=0$. Then the formal eigenvalue equation (which is
a second order recurrence) implies $\xi(z)=0$. From (\ref{eq:def_xi_k_M})
it follows that \[
\mathfrak{F}\!\left(\left\{ \frac{\gamma_{l}^{\,2}}{\lambda_{l}-z}\right\} _{l=k+1}^{\infty}\right)=0\]
for all $k\geq M=M_{z}$. This equality is in contradiction with (\ref{eq:lim_F_T_n}),
however.

Second, suppose $z\notin\der(\lambda)$ is fixed. By Lemma~\ref{lem:convlem_F},
there exists $N\in\mathbb{N}$, $N>M$, such that\[
|w_{n}^{2}|\leq|\lambda_{n}-z||\lambda_{n+1}-z|/2,\ \forall n\geq N.\]
Let us denote\[
C=\prod_{l=1}^{N}|w_{l-1}|^{2}\,\prod_{\substack{l=1\\
\lambda_{l}\neq z}
}^{N}|z-\lambda_{l}|^{-2}.\]
Using also (\ref{eq:F_ineq_exp}) one can estimate\begin{eqnarray*}
 &  & \sum_{k=N}^{\infty}|\xi_{k}(z)|^{2}\,=\,\sum_{k=N}^{\infty}\,\prod_{l=1}^{k}|w_{l-1}|^{2}\,\prod_{\substack{l=1\\
\lambda_{l}\neq z}
}^{k}|z-\lambda_{l}|^{-2}\left|\mathfrak{F}\!\left(\left\{ \frac{\gamma_{l}^{\,2}}{\lambda_{l}-z}\right\} _{l=k+1}^{\infty}\right)\right|^{2}\\
 &  & \leq\, C\exp\!\left(2\sum_{k=N+1}^{\infty}\left|\frac{w_{k}^{\,2}}{(\lambda_{k}-z)(\lambda_{k+1}-z)}\right|\right)\sum_{k=N}^{\infty}\,\prod_{l=N+1}^{k}\!\left(\frac{1}{2}\left|\frac{z-\lambda_{l-1}}{z-\lambda_{l}}\right|\right)\!.\end{eqnarray*}
Since $|\lambda_{k}-z|\geq\tau$ for all $k>M$ and some $\tau>0$,
the RHS is finite. \end{proof}

Further we wish to prove a statement converse to Proposition~\ref{prop:ZinSpec}.
Our approach is based on a formula for the Green function generalizing
a similar result known for the finite-dimensional case; see (\ref{eq:green_fin}).

\begin{proposition} \label{thm:spec_p_Z} Let condition (\ref{eq:assum_sum_w})
be fulfilled for at least one $z_{0}\in\mathbb{C}_{0}^{\lambda}$.
If $z\in\mathbb{C\setminus}\der(\lambda)$ does not belong to the
zero set $\mathfrak{Z}(\mathcal{J})$ then $z\in\rho(J_{\text{max}})$
and the Green function for the spectral parameter $z$, \[
G(z;i,j):=\langle e_{i},(J_{\text{max}}-z)^{-1}e_{j}\rangle,\ i,j\in\mathbb{N},\]
(a matrix in the standard basis) is given by the formula \begin{eqnarray}
 &  & \hskip-1.4emG(z;i,j)\,=\,-\frac{1}{w_{\max(i,j)}}\left(\prod_{l=\min(i,j)}^{\max(i,j)}\,\frac{w_{l}}{z-\lambda_{l}}\right)\label{eq:green_J}\\
\noalign{\smallskip} &  & \qquad\qquad\times\,\mathfrak{F}\!\left(\left\{ \frac{\gamma_{l}^{\,2}}{\lambda_{l}-z}\right\} _{l=1}^{\min(i,j)-1}\right)\mathfrak{F}\!\left(\left\{ \frac{\gamma_{l}^{\,2}}{\lambda_{l}-z}\right\} _{l=\max(i,j)+1}^{\infty}\right)\mathfrak{F}\!\left(\left\{ \frac{\gamma_{l}^{\,2}}{\lambda_{l}-z}\right\} _{l=1}^{\infty}\right)^{\!-1}\!\!.\nonumber \end{eqnarray}
In particular, for the Weyl m-function one has\begin{equation}
m(z):=G(z;1,1)=\frac{1}{\lambda_{1}-z}\,\mathfrak{F}\!\left(\left\{ \frac{\gamma_{l}^{\,2}}{\lambda_{l}-z}\right\} _{l=2}^{\infty}\right)\mathfrak{F}\!\left(\left\{ \frac{\gamma_{l}^{\,2}}{\lambda_{l}-z}\right\} _{l=1}^{\infty}\right)^{\!-1}\!.\label{eq:Weyl_mfce}\end{equation}
If, in addition, $|\lambda_{n}|\to\infty$ as $n\to\infty$ then $\der(\lambda)=\emptyset$
and for every $z\in\mathbb{C\setminus}\mathfrak{Z}(\mathcal{J})$,
the resolvent $(J_{\text{max}}-z)^{-1}$ is compact. \end{proposition}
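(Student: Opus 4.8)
The plan is to construct the resolvent explicitly and then recognize (\ref{eq:green_J}) as the classical Green-function formula assembled from two distinguished solutions of the three-term recurrence (\ref{eq:eigenvalue}). I would take $\phi_n:=p_{n-1}(z)$, the left-regular solution (the orthogonal polynomial of the first kind), which by the $\mathfrak{F}$-representation of $p_n$ recalled above satisfies (\ref{eq:eigenvalue}) including the boundary relation at $n=1$, and $\psi_n:=\xi_n(z)$ from (\ref{eq:def_xi_k}), which solves the recurrence for $n\ge 2$ and, by the estimate already carried out in the proof of Proposition~\ref{prop:ZinSpec}, lies in $\ell^2(\mathbb{N})$. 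Using $\gamma_n\gamma_{n+1}=w_n$ and cancelling the telescoping products, the kernel in (\ref{eq:green_J}) rewrites as $G(z;i,j)=-\phi_{\min(i,j)}\psi_{\max(i,j)}/F_{\mathcal{J}}(z)$. This is finite and unambiguous precisely because $z\notin\mathfrak{Z}(\mathcal{J})$ forces $F_{\mathcal{J}}(z)\neq 0$ when $z\in\mathbb{C}_0^\lambda$; at the poles $z=\lambda_m$ one reads every factor as the limit $u\to z$ built into (\ref{eq:def_Z}) and (\ref{eq:def_xi_k}), the spurious singularities of numerator and denominator cancelling.

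The linchpin is the constant-in-$n$ identity $\mathcal{W}(\phi,\psi)=w_n(\phi_n\psi_{n+1}-\phi_{n+1}\psi_n)=-F_{\mathcal{J}}(z)$. I would verify it at $n=1$ by inserting $\phi_1=1$, $\phi_2=(z-\lambda_1)/w_1$ and the $\mathfrak{F}$-expressions for $\psi_1,\psi_2$, the two $\mathfrak{F}$-terms collapsing through the splitting rule (\ref{eq:F_T_recur}) (a special case of (\ref{eq:F_recur_general})). Granting this, that $R(z)$ with kernel $G$ is a right inverse is a plain substitution: the row operator $\mathcal{J}-z$ annihilates the column $G(z;\cdot,j)$ at every index off the diagonal, because $\phi$ and $\psi$ solve the recurrence and $\phi$ respects the boundary at $n=1$, while at the junction $i=j$ the three surviving terms recombine, via the recurrence for $\phi$, into $-\mathcal{W}(\phi,\psi)/F_{\mathcal{J}}(z)=1$, i.e. exactly $\delta_{ij}$; this is the infinite analogue of (\ref{eq:green_fin}). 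Boundedness of $R(z)$ on $\ell^2(\mathbb{N})$ I would obtain by writing $(R(z)y)_i$ as the two Abel sums $\phi_i\sum_{j\ge i}\psi_j y_j$ and $\psi_i\sum_{j<i}\phi_j y_j$ and estimating them with the uniform bound (\ref{eq:F_ineq_exp}) on the $\mathfrak{F}$-factors and the locally uniform control from Lemma~\ref{lem:convlem_F}; the Weyl function (\ref{eq:Weyl_mfce}) is then simply the entry $i=j=1$.

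The step that needs the most care, and which I expect to be the genuine obstacle, is upgrading $R(z)$ to a two-sided inverse, i.e. establishing $R(z)(J_{\text{max}}-z)y=y$ for every $y\in\Dom(J_{\text{max}})$. I would do this through the discrete Green identity (\ref{eq:sum_1n_uv}) applied to $y$ and the row $G(z;i,\cdot)$: summation up to $N$ reproduces $y_i$ together with the Wronskian-type boundary term $-(\phi_i/F_{\mathcal{J}}(z))\,w_N(\psi_N y_{N+1}-\psi_{N+1}y_N)$, so everything hinges on this term tending to $0$. I would control it from two consequences of (\ref{eq:assum_sum_w}): the geometric decay $|\xi_k|\le C\,2^{-k/2}|\lambda_k-z|^{-1/2}$ read off from the telescoping product in the proof of Proposition~\ref{prop:ZinSpec}, and the fact that, since $|w_n|^2/(|\lambda_n-z||\lambda_{n+1}-z|)\to 0$, the off-diagonal part is $\diag(\lambda_n-z)$-bounded with relative bound $0$, so that $y\in\Dom(J_{\text{max}})$ forces $\sum_n|\lambda_n-z|^2|y_n|^2<\infty$ and hence $|\lambda_n-z|^{1/2}|y_n|$ to be bounded. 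Combined with $|w_N|\le\frac12(|\lambda_N-z||\lambda_{N+1}-z|)^{1/2}$, both $w_N\psi_N y_{N+1}$ and $w_N\psi_{N+1}y_N$ then carry an extra factor $2^{-N/2}$ and vanish in the limit. This simultaneously gives injectivity of $J_{\text{max}}-z$ and the left inverse, so $z\in\rho(J_{\text{max}})$ and $R(z)=(J_{\text{max}}-z)^{-1}$.

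For the final assertion, if $|\lambda_n|\to\infty$ then the sequence $\lambda$ has no finite accumulation point, so $\der(\lambda)=\emptyset$ and the first part applies on all of $\mathbb{C}\setminus\mathfrak{Z}(\mathcal{J})$. Compactness of $(J_{\text{max}}-z)^{-1}$ I would deduce not from the kernel but from the same relative-bound-zero estimate: it gives $\|\diag(\lambda_n-z)\,y\|\le\,$const$\,(\|(J_{\text{max}}-z)y\|+\|y\|)$, so a set bounded in the graph norm satisfies a uniform bound on $\sum_n|\lambda_n-z|^2|y_n|^2$; since $|\lambda_n|\to\infty$ this makes such a set tight, the embedding $\Dom(J_{\text{max}})\hookrightarrow\ell^2(\mathbb{N})$ compact, and therefore $(J_{\text{max}}-z)^{-1}$ compact.
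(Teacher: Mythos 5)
Your algebraic setup is correct and is essentially an equivalent repackaging of the paper's formula: the factorization $G(z;i,j)=-\,p_{\min(i,j)-1}(z)\,\xi_{\max(i,j)}(z)/F_{\mathcal{J}}(z)$ does reproduce the right-hand side of (\ref{eq:green_J}) after the telescoping cancellations, the Wronskian identity $\mathcal{W}(\phi,\psi)=-F_{\mathcal{J}}(z)$ checks out via (\ref{eq:F_T_recur}), and the right-inverse computation is sound. The genuine gap is in your third and fourth paragraphs: the claim that condition (\ref{eq:assum_sum_w}) makes $WU^{*}+UW$ relatively bounded with respect to $\diag(\lambda_{n}-z)$ with relative bound $0$ is false. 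Condition (\ref{eq:assum_sum_w}) only controls $|w_{n}|^{2}$ by the \emph{product} $|\lambda_{n}-z||\lambda_{n+1}-z|$, and these two factors can be wildly disparate. Concretely, take $\lambda_{2k-1}=k$, $\lambda_{2k}=100^{k}$ and $w_{n}^{\,2}=2^{-n}\lambda_{n}\lambda_{n+1}$: then (\ref{eq:assum_sum_w}) holds (the series is dominated by $\sum 2^{-n}$), yet $\|(WU^{*}+UW)e_{2k-1}\|^{2}\geq w_{2k-1}^{\,2}=2k\cdot 25^{k}$ while $\|(L-z)e_{2k-1}\|\sim k$, so the off-diagonal part is not even $L$-bounded. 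Consequently your deduction that $y\in\Dom(J_{\text{max}})$ forces $\sum_{n}|\lambda_{n}-z|^{2}|y_{n}|^{2}<\infty$ (indeed, even the inclusion $\Dom(J_{\text{max}})\subset\Dom(L)$) is unjustified, the boundedness of $|\lambda_{n}-z|^{1/2}|y_{n}|$ is unavailable, and your boundary term $w_{N}(\psi_{N}y_{N+1}-\psi_{N+1}y_{N})$ is not shown to vanish: when $\lambda_{n}$ grows superexponentially the factor $2^{-N/2}$ need not beat $|\lambda_{N}-z|^{1/2}|y_{N}|$. The same false estimate is the sole support for your compactness argument at the end (compact embedding of the graph-norm ball), so that step collapses as well. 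Tellingly, the paper itself invokes relative boundedness and Kato--Rellich only in Theorem~\ref{thm:spec_eq_Lbd}\,(ii), under the \emph{additional} hypothesis (\ref{eq:limsup_w_lbd}), which is exactly the ratio control your argument would need and which (\ref{eq:assum_sum_w}) does not supply.

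The paper's proof avoids domain analysis of $L$ altogether. It proves the kernel estimate $|R(z)_{i,j}|\leq C\,2^{-|i-j|}$ directly from (\ref{eq:assum_sum_w}) (choosing $n_{0}$ with $|w_{n}|^{2}\leq|\lambda_{n}-z||\lambda_{n+1}-z|/4$ for $n\geq n_{0}$ and telescoping the product $\prod w_{l}/(\lambda_{l}-z)$), decomposes $R(z)=\sum_{s\in\mathbb{Z}}R(z;s)$ over diagonals with $\|R(z;s)\|\leq C\,2^{-|s|}$ to get boundedness, verifies $(\mathcal{J}-z)R(z)=R(z)(\mathcal{J}-z)=I$ as entrywise matrix identities via (\ref{eq:F_T_recur_k}) (each entry of these products is a finite, three-term sum because $\mathcal{J}$ is tridiagonal), and then concludes mutual invertibility by inspection of domains; compactness under $|\lambda_{n}|\to\infty$ follows from the refined bound $|R(z)_{i,j}|\lesssim(|\lambda_{i}-z||\lambda_{j}-z|)^{-1/2}\,2^{-|i-j|}$, which forces the entries of each $R(z;s)$ to vanish along its diagonal, making each $R(z;s)$ compact and the norm-convergent sum compact. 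Your proof could be repaired by replacing the relative-boundedness route with exactly this kernel-decay machinery (which you in fact have in hand, via the telescoping bound on $\xi_{k}$); note also that your boundedness argument for $R(z)$ as stated---uniform bounds on the $\mathfrak{F}$-factors alone---is insufficient for $\ell^{2}$-boundedness and likewise needs the geometric off-diagonal decay assembled into a Schur-type estimate.
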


\begin{proof} Denote by $R(z)_{i,j}$ the RHS of (\ref{eq:green_J}).
Thus $R(z)$ is an infinite matrix provided its entries $R(z)_{i,j}$,
$i,j\in\mathbb{N}$, make good sense. Suppose that a complex number
$z$ does not belong to $\mathfrak{Z}(J)\cup\der(\lambda)$. By Lemma~\foreignlanguage{english}{\ref{lem:convlem_F},}
in that case the RHS of (\ref{eq:green_J}) is well defined. By inspection
of the expression one finds that this is so even if $z$ happens to
coincide with a member $\lambda_{k}$ of the sequence $\lambda$ not
belonging to $\der(\lambda)$, i.e. the seeming singularity at $z=\lambda_{k}$
is removable. For the sake of simplicity we assume in the remainder
of the proof, however, that $z$ does not belong to the range of the
sequence $\lambda$. The only purpose of this assumption is just to
simplify the discussion and to avoid more complex expressions but
otherwise it is not essential for the result.

First let us show that there exists a constant $C$, possibly depending
on $z$ but independent of the indices $i$, $j$, such that\begin{equation}
|R(z)_{i,j}|\leq C\,2^{-|i-j|},\ \forall i,j\in\mathbb{N}.\label{eq:Rz_estim_exp}\end{equation}
To this end, denote \[
\tau=\inf\{|z-\lambda_{n}|;\ n\in\mathbb{N}\}>0.\]
Assuming (\ref{eq:assum_sum_w}), one can choose $n_{0}\in\mathbb{N}$
so that, for all $n\geq n_{0}$, \begin{equation}
|w_{n}|^{2}\leq|\lambda_{n}-z|\,|\lambda_{n+1}-z|/4.\label{eq:F_summand_estim}\end{equation}
Let us assume, for the sake of definiteness, that $i\leq j$. Again
by (\ref{eq:assum_sum_w}) and (\ref{eq:F_ineq_exp}),\[
\left|\mathfrak{F}\!\left(\left\{ \frac{\gamma_{l}^{\,2}}{\lambda_{l}-z}\right\} _{l=1}^{i-1}\right)\mathfrak{F}\!\left(\left\{ \frac{\gamma_{l}^{\,2}}{\lambda_{l}-z}\right\} _{l=j+1}^{\infty}\right)\mathfrak{F}\!\left(\left\{ \frac{\gamma_{l}^{\,2}}{\lambda_{l}-z}\right\} _{l=1}^{\infty}\right)^{\!-1}\right|\leq C_{1},\]
for all $i$, $j$. It remains to estimate the expression \begin{equation}
\frac{1}{|\lambda_{j}-z|}\!\left|\,\prod_{l=i}^{j-1}\,\frac{w_{l}}{\lambda_{l}-z}\right|\!.\label{eq:green_aux_prod}\end{equation}

We distinguish three cases. For the finite set of couples $i$, $j$,
$i\leq j\leq n_{0}$, (\ref{eq:green_aux_prod}) is bounded from above
by a constant $C_{2}$. Using (\ref{eq:F_summand_estim}), if $i\leq n_{0}\leq j$
then (\ref{eq:green_aux_prod}) is majorized by\begin{eqnarray*}
C_{2}\left|\frac{\lambda_{n_{0}}-z}{\lambda_{j}-z}\,\prod_{l=n_{0}}^{j-1}\,\frac{w_{l}}{\lambda_{l}-z}\right| & \leq & C_{2}\tau^{-1/2}|\lambda_{n_{0}}-z|^{1/2}\,2^{-j+n_{0}}.\end{eqnarray*}
Similarly, if $n_{0}\leq i\leq j$ then (\ref{eq:green_aux_prod})
is majorized by $\tau^{-1}2^{-j+i}$. From these partial upper bounds
the estimate (\ref{eq:Rz_estim_exp}) readily follows.

From (\ref{eq:Rz_estim_exp}) one deduces that the matrix $R(z)$
represents a bounded operator on $\ell^{2}(\mathbb{N})$. In fact,
one can write $R(z)$ as a countable sum,\begin{equation}
R(z)=\sum_{s\in\mathbb{Z}}R(z;s),\label{eq:Rz_sum_Rsz}\end{equation}
where the matrix elements of the summands are $R(z;s)_{i,j}=R(z)_{i,j}$
if $i-j=s$ and $R(z;s)_{i,j}=0$ otherwise. Thus $R(z;s)$ has nonvanishing
elements on only one parallel to the diagonal and\[
\|R(z;s)\|=\sup\{|R(z)_{i,j}|;\, i-j=s\}\leq C\,2^{-|s|}.\]
Hence the series (\ref{eq:Rz_sum_Rsz}) converges in the operator
norm. With some abuse of notation, we shall denote the corresponding
bounded operator again by the symbol $R(z)$.

Further one observes that, on the level of formal matrix products,\[
(\mathfrak{J}-z)R(z)=R(z)(\mathfrak{J}-z)=I.\]
The both equalities are in fact equivalent to the countable system
of equations (with $w_{0}=0$)\[
w_{k-1}G(z;i,k-1)+(\lambda_{k}-z)G(z;i,k)+w_{k}G(z;i,k+1)=\delta_{i,k},\ i,k\in\mathbb{N}.\]
This can be verified, in a straightforward manner, with the aid of
the rule (\ref{eq:F_T_recur_k}) or some of its particular cases (\ref{eq:F_T_recur})
and (\ref{eq:calF_reccur_reverse}). By inspection of the domains
one then readily shows that the operators $J_{\text{max}}-z$ and
$R(z)$ are mutually inverse and so $z\in\rho(J_{\text{max}})$.

Finally, suppose that $|\lambda_{n}|\to\infty$ as $n\to\infty$,
and $z\in\mathbb{C\setminus\mathfrak{Z}}(\mathcal{J})$. It turns
out that then the above estimates may be somewhat refined. In particular,
(\ref{eq:green_aux_prod}) is majorized by\[
|\lambda_{i}-z|^{-1/2}|\lambda_{j}-z|^{-1/2}\,2^{-j+i}\]
for $n_{0}\leq i,j$. But this implies that $R(z;s)_{i,j}\to0$ as
$i,j\to\infty$, with $i-j=s$ being constant. It follows that the
operators $R(z;s)$ are compact. Since the series (\ref{eq:Rz_sum_Rsz})
converges in the operator norm, $R(z)$ is compact as well.\end{proof}

\begin{corollary} \label{cor:specJ} If condition (\ref{eq:assum_sum_w})
is fulfilled for at least one $z_{0}\in\mathbb{C}_{0}^{\lambda}$
then\[
\spec(J_{\text{max}})\setminus\der(\lambda)=\spec_{p}(J_{\text{max}})\setminus\der(\lambda)=\mathfrak{Z}(\mathcal{J}).\]
 \end{corollary}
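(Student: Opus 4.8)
The plan is to assemble the corollary directly from the two preceding propositions, since essentially all of the analytic work has already been carried out there. The key observation is that, taken together, these propositions establish a clean dichotomy on the set $\mathbb{C}\setminus\der(\lambda)$: Proposition~\ref{prop:ZinSpec} shows that membership in $\mathfrak{Z}(\mathcal{J})$ forces $z$ to be an eigenvalue of $J_{\text{max}}$, while Proposition~\ref{thm:spec_p_Z} shows that failure to belong to $\mathfrak{Z}(\mathcal{J})$ forces $z$ into the resolvent set $\rho(J_{\text{max}})$. The corollary is then a matter of recording the resulting inclusions and chaining them.

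First I would extract the two inclusions explicitly. From Proposition~\ref{prop:ZinSpec}, every $z\in\mathfrak{Z}(\mathcal{J})$ is an eigenvalue of $J_{\text{max}}$, giving $\mathfrak{Z}(\mathcal{J})\subset\spec_{p}(J_{\text{max}})$; and since by the definition~(\ref{eq:def_Z}) one has $\mathfrak{Z}(\mathcal{J})\subset\mathbb{C}\setminus\der(\lambda)$, this refines at once to $\mathfrak{Z}(\mathcal{J})\subset\spec_{p}(J_{\text{max}})\setminus\der(\lambda)$. From Proposition~\ref{thm:spec_p_Z}, every $z\in(\mathbb{C}\setminus\der(\lambda))\setminus\mathfrak{Z}(\mathcal{J})$ lies in $\rho(J_{\text{max}})$ and hence outside $\spec(J_{\text{max}})$; this is precisely the converse implication needed to pin down the spectrum from above.

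Then I would close the loop using the trivial inclusion $\spec_{p}(J_{\text{max}})\subset\spec(J_{\text{max}})$. Combining the three pieces yields the chain
\[
\mathfrak{Z}(\mathcal{J})\subset\spec_{p}(J_{\text{max}})\setminus\der(\lambda)\subset\spec(J_{\text{max}})\setminus\der(\lambda)\subset\mathfrak{Z}(\mathcal{J}),
\]
where the last inclusion is the contrapositive of Proposition~\ref{thm:spec_p_Z}: any $z\in\spec(J_{\text{max}})\setminus\der(\lambda)$ that failed to lie in $\mathfrak{Z}(\mathcal{J})$ would belong to $\rho(J_{\text{max}})$, a contradiction. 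Since the chain both begins and ends with $\mathfrak{Z}(\mathcal{J})$, all three sets coincide, which is the assertion.

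I do not expect any genuine obstacle here; the statement is a bookkeeping consequence of the surrounding propositions. The only point demanding a moment's care is the role of $\der(\lambda)$: one must remember that $\mathfrak{Z}(\mathcal{J})$ is by construction disjoint from $\der(\lambda)$, so that removing $\der(\lambda)$ from the two spectral sets is exactly compatible with the two propositions, whose conclusions are asserted only on $\mathbb{C}\setminus\der(\lambda)$.
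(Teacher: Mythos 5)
Your proposal is correct and follows exactly the paper's own argument: Proposition~\ref{prop:ZinSpec} yields $\mathfrak{Z}(\mathcal{J})\subset\spec_{p}(J_{\text{max}})\setminus\der(\lambda)$, the contrapositive of Proposition~\ref{thm:spec_p_Z} yields $\spec(J_{\text{max}})\setminus\der(\lambda)\subset\mathfrak{Z}(\mathcal{J})$, and the trivial inclusion $\spec_{p}\subset\spec$ closes the chain. Your added remark that $\mathfrak{Z}(\mathcal{J})$ is by definition~(\ref{eq:def_Z}) disjoint from $\der(\lambda)$ is the same bookkeeping the paper leaves implicit.
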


\begin{proof} Propositions~\ref{prop:ZinSpec} and\foreignlanguage{english}{
}\ref{thm:spec_p_Z} respectively imply the inclusions \[
\mathfrak{Z}(\mathcal{J})\subset\spec_{p}(J_{\text{max}})\setminus\der(\lambda),\ \ \spec(J_{\text{max}})\setminus\der(\lambda)\subset\mathfrak{Z}(\mathcal{J}).\]
This shows the equality. \end{proof}

\begin{theorem} \label{thm:specJ_proper_Znull} Suppose that the
convergence condition (\ref{eq:assum_sum_w}) is fulfilled for at
least one $z_{0}\in\mathbb{C}_{0}^{\lambda}$ and the function $F_{\mathcal{J}}(z)$
does not vanish identically on $\mathbb{C}_{0}^{\lambda}$. Then $J_{\text{min}}=J_{\text{max}}=:J$
and\begin{equation}
\spec(J)\setminus\der(\lambda)=\spec_{p}(J)\setminus\der(\lambda)=\mathfrak{Z}(\mathcal{J}).\label{eq:spec_p_Z}\end{equation}
Suppose, in addition, that the set $\mathbb{C\setminus}\der(\lambda)$
is connected. Then $\spec(J)\setminus\der(\lambda)$ consists of simple
eigenvalues which have no accumulation points in $\mathbb{C}\setminus\der(\lambda)$.
\end{theorem}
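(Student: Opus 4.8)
The plan is to prove Theorem~\ref{thm:specJ_proper_Znull} in two stages, first establishing the operator identity $J_{\text{min}}=J_{\text{max}}$ and the spectral characterization~(\ref{eq:spec_p_Z}), and then, under the additional connectedness hypothesis, the simplicity and discreteness of the eigenvalues. For the first stage, the key observation is that the nonvanishing of $F_{\mathcal{J}}(z)$ on $\mathbb{C}_0^{\lambda}$ gives us a point in the resolvent set. Indeed, by Proposition~\ref{thm:spec_p_Z}, any $z\in\mathbb{C}_0^{\lambda}$ with $F_{\mathcal{J}}(z)\neq0$ lies in $\rho(J_{\text{max}})$, so the hypothesis that $F_{\mathcal{J}}$ does not vanish identically (combined with the fact, from Lemma~\ref{lem:convlem_F}, that $F_{\mathcal{J}}$ is analytic and hence has isolated zeros on each connected component of $\mathbb{C}_0^{\lambda}$) ensures $\rho(J_{\text{max}})\neq\emptyset$. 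Theorem~\ref{thm:J_complex_proper} then immediately yields $J_{\text{min}}=J_{\text{max}}=:J$. The spectral identity~(\ref{eq:spec_p_Z}) is then nothing but a restatement of Corollary~\ref{cor:specJ} with $J_{\text{max}}$ replaced by the now-unambiguous $J$.

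For the second stage, I would assume $\mathbb{C}\setminus\der(\lambda)$ connected and argue about the structure of the eigenvalue set. Simplicity is essentially free: as already noted in the discussion following~(\ref{eq:eigenvalue}), any solution of the formal eigenvalue equation is determined by its first component, so every eigenvalue of $J$ is geometrically simple; one then checks via Proposition~\ref{thm:spec_p_Z} (which shows the resolvent exists off $\mathfrak{Z}(\mathcal{J})$) that these are also algebraically simple poles of the resolvent. The more delicate point is the absence of accumulation points of $\spec(J)\setminus\der(\lambda)$ inside $\mathbb{C}\setminus\der(\lambda)$.

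The hard part will be handling the poles of $F_{\mathcal{J}}(z)$ at the points $\lambda_k\notin\der(\lambda)$ and reconciling them with the zero set $\mathfrak{Z}(\mathcal{J})$. The natural strategy is to pass from $F_{\mathcal{J}}$, which is merely meromorphic on $\mathbb{C}\setminus\der(\lambda)$ (with poles of order at most $r(z)$ at the $\lambda_k$, as established in the remark after Lemma~\ref{lem:convlem_F}), to an honestly analytic function on all of $\mathbb{C}\setminus\der(\lambda)$ by clearing denominators. Concretely, using the recursion~(\ref{eq:F_T_recur_k}) in the form exhibited just before Subsection~\ref{subsec:operatorJ}, one can write $F_{\mathcal{J}}(u)$ locally near each $\lambda_M$ with its pole of order $r$ factored out; multiplying by the local factor $(u-\lambda_M)^{r}$ produces an analytic function whose zeros are exactly $\mathfrak{Z}(\mathcal{J})$ in a neighborhood. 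Since $\mathbb{C}\setminus\der(\lambda)$ is connected and $F_{\mathcal{J}}$ is not identically zero, this globally analytic representative is not identically zero on the connected domain, and therefore its zeros are isolated with no accumulation point inside $\mathbb{C}\setminus\der(\lambda)$. Because $\mathfrak{Z}(\mathcal{J})$ coincides locally with the zero set of this analytic function, it too has no accumulation points there.

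I expect the principal obstacle to be the bookkeeping at the pole points: one must verify that the local analytic representative obtained by multiplying by $(u-\lambda_M)^{r(z)}$ is genuinely nonvanishing away from $\mathfrak{Z}(\mathcal{J})$ and that the definition~(\ref{eq:def_Z}) of $\mathfrak{Z}(\mathcal{J})$ via the limit $\lim_{u\to z}(u-z)^{r(z)}F_{\mathcal{J}}(u)=0$ matches the vanishing of this representative. The cleanest route is probably to fix an arbitrary $z_1\in\mathbb{C}_0^{\lambda}$ with $F_{\mathcal{J}}(z_1)\neq 0$ and construct, on the connected set $\mathbb{C}\setminus\der(\lambda)$, a single entire-over-the-domain function $\Phi(z)$ (a suitable product of $F_{\mathcal{J}}(z)$ with factors $\prod(z-\lambda_k)$ handled via the extended definition~(\ref{eq:F_recur_general})) that is analytic, not identically zero, and whose zero set is precisely $\mathfrak{Z}(\mathcal{J})$; the identity theorem for analytic functions then finishes the argument. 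Once that function is in hand, the statement about no accumulation points follows from the standard fact that the zeros of a nonzero analytic function on a connected open set are isolated.
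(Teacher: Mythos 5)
Your proposal is correct and takes essentially the same route as the paper: the first part (a point where $F_{\mathcal{J}}\neq0$ gives $\rho(J_{\text{max}})\neq\emptyset$ via Proposition~\ref{thm:spec_p_Z}, then Theorem~\ref{thm:J_complex_proper} and Corollary~\ref{cor:specJ}) is identical, and your second part---local analytic representatives $(u-\lambda_M)^{r(z)}F_{\mathcal{J}}(u)$ at the finite-order poles combined with the identity theorem on the connected domain---is a constructive rephrasing of the paper's contradiction argument. One small caution: the speculative global function $\Phi$ built with factors $\prod_k(z-\lambda_k)$ need not make sense when $\lambda$ has infinitely many distinct terms, but this is harmless, since your local argument (together with the observation that distinct points of $\mathfrak{Z}(\mathcal{J})$ coinciding with members $\lambda_n$ cannot accumulate outside $\der(\lambda)$) already suffices.
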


\begin{proof} By the assumptions, $\mathbb{C}\setminus(\der(\lambda)\cup\mathfrak{Z}(\mathcal{J}))\neq\emptyset$.
From Proposition~\ref{thm:spec_p_Z} one infers that $\rho(J_{\text{max}})\neq\emptyset$.
According to Theorem~\ref{thm:J_complex_proper}, one has $J_{\text{min}}=J_{\text{max}}$.
Then (\ref{eq:spec_p_Z}) becomes a particular case of Corollary~\ref{cor:specJ}.

Let us assume that $\mathbb{C\setminus}\der(\lambda)$ is connected.
Then the set $\mathbb{C}_{0}^{\lambda}$ is clearly connected as well.
Suppose on contrary that the point spectrum of $J$ has an accumulation
point in $\mathbb{C}\setminus\der(\lambda)$. Then, by equality (\ref{eq:spec_p_Z}),
the set of zeros of the analytic function $F_{\mathcal{J}}(z)$ has
an accumulation point in $\mathbb{C}\setminus\der(\lambda)$. This
accumulation point may happen to be a member $\lambda_{n}$ of the
sequence $\lambda$, but then one knows that $F_{\mathcal{J}}(z)$
has a pole of finite order at $\lambda_{n}$. In any case, taking
into account that $\mathbb{C}_{0}^{\lambda}$ is connected one comes
to the conclusion that $F_{\mathcal{J}}(z)=0$ everywhere on $\mathbb{C}_{0}^{\lambda}$,
a contradiction. \end{proof}

\begin{remark} \label{rem:Fjz_0_halfplane} Theorem~\ref{thm:specJ_proper_Znull}
is derived under two assumptions:

\smallskip\noindent (i)~The convergence condition (\ref{eq:assum_sum_w})
is fulfilled for at least one $z_{0}\in\mathbb{C}_{0}^{\lambda}$.

\smallskip\noindent (ii)~The function $F_{\mathcal{J}}(z)$ does
not vanish identically on $\mathbb{C}_{0}^{\lambda}$.

\noindent But let us point out that assumption (ii) is automatically
fulfilled if (i) is true and the range of the sequence $\lambda$
is contained in a halfplane. This happens, for example, if the sequence
$\lambda$ is real or the sequence $\{\Re\lambda_{n}\}_{n=1}^{\infty}$
is semibounded. In fact, let us for definiteness consider the latter
case and suppose that $\Re\lambda_{n}\geq c$, $\forall n\in\mathbb{N}$.
Then $(-\infty,c)\subset\mathbb{C}_{0}^{\lambda}$ and $1/|\lambda_{n}-z|$
tends to $0$ monotonically for all $n$ as $z\to-\infty$. Similarly
as in (\ref{eq:F_ineq_exp}) one derives the estimate\[
\left|F_{\mathcal{J}}(z)-1\right|\leq\exp\!\left(\sum_{n=1}^{\infty}\left|\frac{w_{n}^{\,2}}{(\lambda_{n}-z)(\lambda_{n+1}-z)}\right|\right)-1.\]
It follows that $\lim_{z\to-\infty}F_{\mathcal{J}}(z)=1$. Notice
that in the real case, the function $F_{\mathcal{J}}(z)$ can identically
vanish neither on the upper nor on the lower halfplane. \end{remark}

\begin{corollary} \label{cor:specJ_proper_Znull_real} Let $\mathcal{J}$
be real and suppose that (\ref{eq:assum_sum_w}) is fulfilled for
at least one $z_{0}\in\mathbb{C}_{0}^{\lambda}$. Then $J_{\text{min}}=J_{\text{max}}=J$
is self-adjoint and $\spec(J)\setminus\der(\lambda)=\mathfrak{Z}(\mathcal{J})$
consists of simple real eigenvalues which have no accumulation points
in $\mathbb{R}\setminus\der(\lambda)$. \end{corollary}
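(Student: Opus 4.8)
The plan is to derive everything as a specialization of Theorem~\ref{thm:specJ_proper_Znull}, checking that its two hypotheses hold automatically when $\mathcal{J}$ is real, and then upgrading the conclusions by means of self-adjointness. First I would verify hypotheses (i) and (ii) of Remark~\ref{rem:Fjz_0_halfplane}. Assumption (i) is exactly what is assumed. Since $\mathcal{J}$ is real, the whole sequence $\lambda$ lies in $\mathbb{R}$, so $\overline{\{\lambda_{n};\,n\in\mathbb{N}\}}\subset\mathbb{R}$ and in particular $\der(\lambda)\subset\mathbb{R}$; as the range of $\lambda$ then lies in a halfplane, Remark~\ref{rem:Fjz_0_halfplane} guarantees that $F_{\mathcal{J}}$ does not vanish identically on $\mathbb{C}_{0}^{\lambda}$ (it cannot even vanish identically on the upper halfplane, which is contained in $\mathbb{C}_{0}^{\lambda}$). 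Thus Theorem~\ref{thm:specJ_proper_Znull} applies and yields $J_{\text{min}}=J_{\text{max}}=:J$ together with $\spec(J)\setminus\der(\lambda)=\spec_{p}(J)\setminus\der(\lambda)=\mathfrak{Z}(\mathcal{J})$.

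Next I would establish self-adjointness and the reality and simplicity of the eigenvalues. Because $\mathcal{J}$ is real it is Hermitian, so by the relations recalled in Subsection~\ref{subsec:operatorJ} one has $J_{\text{min}}^{\,\ast}=J_{\text{max}}$; combined with $J_{\text{min}}=J_{\text{max}}$ this gives $J=J^{\ast}$, i.e.\ $J$ is self-adjoint. Consequently $\spec(J)\subset\mathbb{R}$, whence $\mathfrak{Z}(\mathcal{J})=\spec(J)\setminus\der(\lambda)$ consists of real points. Simplicity is immediate from the observation made just after~(\ref{eq:eigenvalue}): any solution of the formal eigenvalue equation is determined by its first component, so every eigenvalue of an operator whose matrix equals $\mathcal{J}$ is simple.

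It remains to exclude accumulation points of $\spec(J)\setminus\der(\lambda)$ inside $\mathbb{R}\setminus\der(\lambda)$, and here I would argue by a short dichotomy, since in the present generality the set $\mathbb{C}\setminus\der(\lambda)$ need \emph{not} be connected and so the connectedness hypothesis of Theorem~\ref{thm:specJ_proper_Znull} cannot be invoked unconditionally. Recall that $\der(\lambda)$, being a derived set, is closed, and, as noted above, is contained in $\mathbb{R}$. If $\der(\lambda)=\mathbb{R}$, then $\mathbb{R}\setminus\der(\lambda)=\emptyset$ and the assertion is vacuous. Otherwise $\der(\lambda)$ is a \emph{proper} closed subset of $\mathbb{R}$, and removing such a set from $\mathbb{C}$ leaves a (path-)connected open set, because any two points can be joined by a path crossing the real axis at a point of $\mathbb{R}\setminus\der(\lambda)$. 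In that case the final clause of Theorem~\ref{thm:specJ_proper_Znull} applies verbatim and gives the stronger statement that $\spec(J)\setminus\der(\lambda)$ has no accumulation points anywhere in $\mathbb{C}\setminus\der(\lambda)$, which in particular implies the claim.

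I expect the one genuinely non-routine point — the ``hard part'' — to be precisely this last topological observation: recognizing that connectedness of $\mathbb{C}\setminus\der(\lambda)$ is the sole obstruction to a direct application of the parent theorem, that it fails exactly in the single case $\der(\lambda)=\mathbb{R}$, and that in that exceptional case the conclusion to be proved is empty. All the remaining ingredients are read off directly from the results already established.
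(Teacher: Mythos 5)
Your proof is correct and follows essentially the same route as the paper's: it invokes Remark~\ref{rem:Fjz_0_halfplane} to dispose of assumption (ii) of Theorem~\ref{thm:specJ_proper_Znull}, uses self-adjointness to get reality of the eigenvalues, and resolves the connectedness issue by the very dichotomy the paper uses, namely that $\mathbb{C}\setminus\der(\lambda)$ can be disconnected only when $\der(\lambda)=\mathbb{R}$, in which case the accumulation-point claim is vacuous. Your write-up merely makes explicit a few steps the paper leaves implicit (the path-connectedness argument and the simplicity of eigenvalues via the first-component observation after (\ref{eq:eigenvalue})).
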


\begin{proof} Some assumptions in Theorem~\ref{thm:specJ_proper_Znull}
become superfluous if $\mathcal{J}$ is real. As observed in Remark~\ref{rem:Fjz_0_halfplane},
assuming the convergence condition the function $F_{\mathcal{J}}(z)$
cannot vanish identically on $\mathbb{C}_{0}^{\lambda}$. The operator
$J$ is self-adjoint and may have only real eigenvalues. The set $\mathbb{C}\setminus\der(\lambda)$
may happen to be disconnected only if the range of the sequence $\lambda$
is dense in $\mathbb{R}$, i.e. $\der(\lambda)=\mathbb{R}$. But even
then the conclusion of the theorem remains trivially true. \end{proof}

Let us complete this analysis by a formula for the norms of the eigenvectors
described in Proposition~\ref{prop:ZinSpec}. In order to simplify
the discussion we restrict ourselves to the domain $\mathbb{C}_{0}^{\lambda}$.
Then instead of (\ref{eq:def_xi_k}) one may write\begin{equation}
\xi_{k}(z)=\left(\prod_{l=1}^{k}\,\frac{w_{l-1}}{z-\lambda_{l}}\right)\mathfrak{F}\!\left(\left\{ \frac{\gamma_{l}^{\,2}}{\lambda_{l}-z}\right\} _{l=k+1}^{\infty}\right)\!,\ z\in\mathbb{C}_{0}^{\lambda},\ k\in\mathbb{Z}_{+}.\label{eq:def_xik_new}\end{equation}
This is in fact nothing but the solution $f_{n}$ from (\ref{eq:sols_diff_eq_inf})
restricted to nonnegative indices.

\begin{proposition} \label{prop:norm_xi} If $z\in\mathbb{C}_{0}^{\lambda}$
satisfies (\ref{eq:assum_sum_w}) then the functions $\xi_{k}(z)$,
$k\in\mathbb{Z}_{+}$, defined in (\ref{eq:def_xik_new}) fulfill
\begin{equation}
\sum_{k=1}^{\infty}\xi_{k}(z)^{2}=\xi'_{0}(z)\xi_{1}(z)-\xi_{0}(z)\xi'_{1}(z).\label{eq:xi_sum}\end{equation}
Particularly, if in addition $\mathcal{J}$ is real and $z\in\mathbb{R}\cap\mathbb{C}_{0}^{\lambda}$
is an eigenvalue of $J$ then $\xi(z)=(\xi_{k}(z))_{k=1}^{\infty}$
is a corresponding eigenvector and \begin{equation}
\|\xi(z)\|^{2}=\xi'_{0}(z)\xi_{1}(z).\label{eq:eigenvec_norm}\end{equation}
\end{proposition}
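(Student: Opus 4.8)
The plan is to prove the summation identity \eqref{eq:xi_sum} first and then extract the eigenvector-norm formula \eqref{eq:eigenvec_norm} as a special case. The natural strategy is to recognize the functions $\xi_k(z)$ as the restriction to nonnegative indices of the solution $f_n$ from \eqref{eq:sols_diff_eq_inf}, as the text already points out, and to apply the discrete Green's formula \eqref{eq:sum_1n_uv} together with Proposition~\ref{prop:sum_f1j_f2j}. The idea is to take two copies of the difference equation with diagonal sequences $\zeta^{(1)}_j = \lambda_j - z_1$ and $\zeta^{(2)}_j = \lambda_j - z_2$ for two distinct spectral parameters $z_1, z_2 \in \mathbb{C}_0^\lambda$, so that $\zeta^{(1)}_j - \zeta^{(2)}_j = z_2 - z_1$ is constant in $j$. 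Then the left-hand side of \eqref{eq:sum_inf_f1f2} becomes $(z_2 - z_1)\sum_{j=1}^\infty \xi_j(z_1)\xi_j(z_2)$.

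\medskip
\noindent First I would verify that the hypotheses of Proposition~\ref{prop:sum_f1j_f2j} hold for this choice. The convergence condition \eqref{eq:assum_sum_w} is assumed for $z = z_0$, and by Lemma~\ref{lem:convlem_F} it then holds locally uniformly throughout $\mathbb{C}_0^\lambda$; the two $\sup$ conditions involving mixed products $w_n^2/\bigl((\lambda_n - z_1)(\lambda_{n+1} - z_2)\bigr)$ follow from the same local-uniform bounds on compact subsets, since $z_1, z_2$ range over a compact neighborhood. Applying \eqref{eq:sum_inf_f1f2} with these data gives
\begin{equation*}
(z_2 - z_1)\sum_{j=1}^\infty \xi_j(z_1)\xi_j(z_2) = w_0\bigl(\xi_0(z_1)\xi_1(z_2) - \xi_1(z_1)\xi_0(z_2)\bigr),
\end{equation*}
where $w_0 = 1$ by the convention fixed just before \eqref{eq:def_xi_k_M} (equivalently, in the normalization of \eqref{eq:sols_diff_eq_inf}). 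Dividing by $z_2 - z_1$ yields a two-parameter identity whose right-hand side is a difference quotient.

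\medskip
\noindent The key step is then to pass to the limit $z_2 \to z_1 =: z$. On the left, the locally uniform convergence established in Lemma~\ref{lem:convlem_F}, combined with the exponential bound \eqref{eq:F_ineq_exp}, justifies interchanging the limit with the infinite sum, giving $\sum_{k=1}^\infty \xi_k(z)^2$. On the right, the difference quotient
\begin{equation*}
\frac{\xi_0(z_1)\xi_1(z_2) - \xi_1(z_1)\xi_0(z_2)}{z_2 - z_1}
\end{equation*}
converges to the Wronskian-type expression $\xi_0'(z)\xi_1(z) - \xi_0(z)\xi_1'(z)$; here I use that each $\xi_k$ is analytic on $\mathbb{C}_0^\lambda$ (again by Lemma~\ref{lem:convlem_F}, since $\xi_k$ is a finite product times a locally uniform limit of $\mathfrak{F}$), so the derivatives $\xi_0'(z)$ and $\xi_1'(z)$ exist. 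This establishes \eqref{eq:xi_sum}. For the real case, if $z \in \mathbb{R} \cap \mathbb{C}_0^\lambda$ is an eigenvalue then $\xi_0(z) = F_{\mathcal{J}}(z) = 0$ by Proposition~\ref{prop:ZinSpec} and Corollary~\ref{cor:specJ}, so the second term in \eqref{eq:xi_sum} drops out; since $\mathcal{J}$ is real the coefficients $\xi_k(z)$ are real, whence $\xi_k(z)^2 = |\xi_k(z)|^2$ and the sum equals $\|\xi(z)\|^2$, yielding \eqref{eq:eigenvec_norm}.

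\medskip
\noindent The main obstacle I anticipate is the rigorous justification of the two limit interchanges at $z_2 \to z_1$, and in particular confirming that the symmetric-difference structure of the right-hand side produces exactly the antisymmetric combination $\xi_0'\xi_1 - \xi_0\xi_1'$ rather than a one-sided derivative. This is where the analyticity of the $\xi_k$ and the uniform exponential control from \eqref{eq:F_ineq_exp} must be invoked carefully; everything else is a direct application of Proposition~\ref{prop:sum_f1j_f2j} and the already-established correspondence between $\xi_k$ and the solutions $f_n$.
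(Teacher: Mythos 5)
Your proposal follows the paper's own route step for step: the paper likewise substitutes two spectral parameters into (\ref{eq:sum_1n_uv}), applies Proposition~\ref{prop:sum_f1j_f2j} to $f_j^{(1)}=\xi_j(z)$, $f_j^{(2)}=\xi_j(y)$, and treats the limit $y\to z$ by analyticity, so the strategy, the key lemma, and the limit argument all coincide with the original, and your verification of the $\sup$ hypotheses and the reduction of (\ref{eq:eigenvec_norm}) via $\xi_0(z)=F_{\mathcal{J}}(z)=0$ and realness of $\xi(z)$ are sound.

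One detail needs correction, although your final formula survives it. You chose $\zeta_j^{(i)}=\lambda_j-z_i$, but with that sign the solution $f_n$ from (\ref{eq:sols_diff_eq_inf}) is \emph{not} $\xi_n(z_i)$: the prefactor $\mathcal{P}_n=\prod_{k=1}^{n}w_{k-1}/\zeta_k$ acquires a factor $(-1)^n$ relative to (\ref{eq:def_xik_new}), while the $\mathfrak{F}$ factor is unchanged since $\mathfrak{F}$ depends only on products of consecutive terms and is therefore insensitive to an overall sign flip of its argument. Hence $w_0\bigl(f_0^{(1)}f_1^{(2)}-f_1^{(1)}f_0^{(2)}\bigr)=\xi_1(z_1)\xi_0(z_2)-\xi_0(z_1)\xi_1(z_2)$, the negative of the right-hand side you display, so your two-parameter identity is false as written. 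You then make a second, compensating slip: the difference quotient $\bigl(\xi_0(z_1)\xi_1(z_2)-\xi_1(z_1)\xi_0(z_2)\bigr)/(z_2-z_1)$ actually tends to $\xi_0(z)\xi_1'(z)-\xi_1(z)\xi_0'(z)$ as $z_2\to z_1=z$, not to the antisymmetric combination you claim. The two sign errors cancel, which is why you land on (\ref{eq:xi_sum}); the clean fix is the paper's convention $\zeta_j^{(i)}=z_i-\lambda_j$ (matching the eigenvalue equation $w_nu_{n+1}-(z-\lambda_n)u_n+w_{n-1}u_{n-1}=0$), under which $f_j=\xi_j$ exactly and the limit computation yields (\ref{eq:xi_sum}) directly, with no bookkeeping of alternating signs.
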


\begin{proof} Put $\zeta_{j}^{(1)}=z-\lambda_{j}$, $\zeta_{j}^{(2)}=y-\lambda_{j}$,
$j\in\mathbb{N}$, in equation (\ref{eq:sum_1n_uv}), where $z,y\in\mathbb{C}_{0}^{\lambda}$.
Then Proposition~\ref{prop:sum_f1j_f2j} is applicable to $f_{j}^{(1)}=\xi_{j}(z)$,
$f_{j}^{(2)}=\xi_{j}(y)$, $j\in\mathbb{Z}_{+}$. Hence ($w_{0}=1$)\[
(z-y)\sum_{k=0}^{\infty}\xi_{k}(z)\xi_{k}(y)=\xi_{1}(z)\xi_{0}(y)-\xi_{0}(z)\xi_{1}(y).\]
Now the limit $y\to z$ can be treated in a routine way. \end{proof}

\begin{corollary} Suppose $\mathcal{J}$ is real and let condition
(\ref{eq:assum_sum_w}) be fulfilled for at least one $z_{0}\in\mathbb{C}_{0}^{\lambda}$.
Then the function $F_{\mathcal{J}}(z)$ has only simple real zeros
on $\mathbb{C}_{0}^{\lambda}$. \end{corollary}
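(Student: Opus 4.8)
The plan is to combine the spectral identification already established for real Jacobi matrices with the Wronskian-type norm formula of Proposition~\ref{prop:norm_xi}. First I would fix a zero $z\in\mathbb{C}_{0}^{\lambda}$ of $F_{\mathcal{J}}$ and argue that it is automatically real: by Corollary~\ref{cor:specJ_proper_Znull_real} the reality of $\mathcal{J}$ together with the convergence condition (\ref{eq:assum_sum_w}) guarantees that $J_{\text{min}}=J_{\text{max}}=J$ is self-adjoint and that $\spec(J)\setminus\der(\lambda)=\mathfrak{Z}(\mathcal{J})$. Since the zeros of $F_{\mathcal{J}}$ on $\mathbb{C}_{0}^{\lambda}$ are exactly the points of $\mathfrak{Z}(\mathcal{J})\cap\mathbb{C}_{0}^{\lambda}$, and these are eigenvalues of a self-adjoint operator, they must lie in $\mathbb{R}$. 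Thus $z\in\mathbb{R}\cap\mathbb{C}_{0}^{\lambda}$.

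For the simplicity of the zero I would use the norm formula. On $\mathbb{C}_{0}^{\lambda}$ one has $r(z)=0$, so by definition $\xi_{0}(z)=F_{\mathcal{J}}(z)$ and, by analyticity, $\xi_{0}'(z)=F_{\mathcal{J}}'(z)$. Since $z$ is a zero, $\xi_{0}(z)=F_{\mathcal{J}}(z)=0$, and Proposition~\ref{prop:ZinSpec} then tells us that $z$ is an eigenvalue with eigenvector $\xi(z)=(\xi_{k}(z))_{k=1}^{\infty}$ and, crucially, that $\xi_{1}(z)\neq0$. Feeding $\xi_{0}(z)=0$ into the norm formula (\ref{eq:eigenvec_norm}) gives $\|\xi(z)\|^{2}=\xi_{0}'(z)\,\xi_{1}(z)=F_{\mathcal{J}}'(z)\,\xi_{1}(z)$.

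The conclusion is then immediate: the eigenvector $\xi(z)$ is nonzero (as $\xi_{1}(z)\neq0$), so its squared norm is strictly positive, whence $F_{\mathcal{J}}'(z)\,\xi_{1}(z)>0$ and in particular $F_{\mathcal{J}}'(z)\neq0$. Therefore every zero of $F_{\mathcal{J}}$ on $\mathbb{C}_{0}^{\lambda}$ is simple. The matter to verify carefully, rather than any genuine obstacle, is the bookkeeping at $z$: that (\ref{eq:eigenvec_norm}) applies (it is stated precisely for real $\mathcal{J}$ and an eigenvalue $z\in\mathbb{R}\cap\mathbb{C}_{0}^{\lambda}$, which is why the reality step must come first), that $\xi_{0}$ coincides with $F_{\mathcal{J}}$ together with its derivative on $\mathbb{C}_{0}^{\lambda}$, and that the nonvanishing of $\xi_{1}(z)$ from Proposition~\ref{prop:ZinSpec} indeed secures $\|\xi(z)\|^{2}>0$.
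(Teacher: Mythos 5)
Your proposal is correct and follows essentially the same route as the paper: reality of the zero via Corollary~\ref{cor:specJ_proper_Znull_real} (self-adjointness of $J$), nonvanishing of the eigenvector $\xi(z)$ via Proposition~\ref{prop:ZinSpec}, and simplicity from the norm identity (\ref{eq:eigenvec_norm}), which forces $\xi_{0}'(z)=F_{\mathcal{J}}'(z)\neq0$. Your explicit appeal to $\xi_{1}(z)\neq0$ is a minor refinement of the paper's statement that $\xi(z)\neq0$, but the argument is the same.
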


\begin{proof} Suppose $z\in\mathbb{C}_{0}^{\lambda}$ is a zero of
$F_{\mathcal{J}}(z)$, i.e. $F_{\mathcal{J}}(z)\equiv\xi_{0}(z)=0$.
Then $z$ is a real eigenvalue of $J$ where $J=J_{\text{max}}=J_{\text{min}}$
is self-adjoint, as we know from Corollary~\ref{cor:specJ_proper_Znull_real}.
Moreover, by Proposition~\ref{prop:ZinSpec}, $\xi(z)\neq0$ is a
corresponding real eigenvector. Hence from (\ref{eq:eigenvec_norm})
one infers that necessarily $\xi_{0}'(z)\neq0$. \end{proof}

\subsection{Approximation of the spectrum by spectra of truncated matrices}

Let us first introduce some additional notation which will be needed
in the current subsection. $W$ and $L$ stand for the diagonal matrix
operators whose diagonals equal $w$ and $\lambda$, respectively.
$U$ designates the unilateral shift and $U^{*}$ its adjoint operator
($Ue_{n}=e_{n+1}$ for $n=1,2,3,\dots$, with $e_{n}$ being the vectors
of the standard basis).

For a Jacobi matrix $\mathcal{J}$ we introduce the set \begin{equation}
\Lambda(\mathcal{J}):=\{\mu\in\mathbb{C};\,\lim_{n\rightarrow\infty}\dist(\spec(J_{n}),\mu)=0\}\label{eq:def_Lambda}\end{equation}
where $J_{n}$ is defined in (\ref{eq:def Jn}) but now with $v=w$.
Thus $\mu\in\Lambda(\mathcal{J})$ iff there exists a sequence of
eigenvalues $\{\mu_{n}\}$ of $J_{n}$ such that $\mu_{n}\rightarrow\mu$
as $n\rightarrow\infty$.

\begin{remark} (i)~Definition (\ref{eq:def_Lambda}) is taken, for
example, from \cite{Arveson}. Some authors prefer to work, however,
with the set $\tilde{\Lambda}(\mathcal{J})$ formed by all limit points
of sequences $\{\mu_{k}\}$ of eigenvalues from $\spec(J_{n_{k}})$,
with $\{n_{k}\}$ being any possible strictly increasing sequence
of indices \cite{HartmanWinter,Ifantis_etal}. One clearly has $\phantom{\Big|}$$\Lambda(\mathcal{J})\subset\tilde{\Lambda}(\mathcal{J})$
and the inclusion is in general strict as demonstrated by a simple
example in \cite[Eq.~(2)]{HartmanWinter}.

(ii)~In the case where the sequences $\lambda$ and $w$ are real
and positive, respectively, the relation between the spectrum of a
Jacobi operator $J$ and the set $\Lambda(\mathcal{J})$ has been
intensively studied by several authors. Most results of this kind
are restricted to the bounded case, however. The inclusion $\spec(J)\subset\Lambda(\mathcal{J})$
is proved for bounded Jacobi matrices in \cite[Theorem~2.3]{Arveson}.
It cannot be replaced, in general, by the sign of equality as demonstrates
an counterexample constructed in the Appendix in \cite{Arveson}.

(iii)~In \cite[Theorem~5.1]{IfantisSiafarikas} it is shown that
$\spec(J_{S})\subset\Lambda(\mathcal{J})$ where the operator\[
J_{S}:=L+WU^{*}+UW\]
(an operator sum) is assumed to be self-adjoint. Some reasoning used
below, notably that in Lemmas~\ref{lem:JS_strong_conv} and \ref{prop:spec_in_Lambda},
is inspired by this article. We also remark that Theorem~\ref{thm:spec_eq_Lbd}
below partially reproduces and overlaps with Theorems~2.1 and 2.10
from \cite{Ifantis_etal}. \end{remark}

\begin{lemma} \label{prop:Lambda_sub_Z} If condition (\ref{eq:assum_sum_w})
is fulfilled for at least one $z_{0}\in\mathbb{C}_{0}^{\lambda}$
then \begin{equation}
\Lambda(\mathcal{J})\setminus\der(\lambda)\subset\mathfrak{Z}(\mathcal{J}).\label{eq:Lambda_in_Z}\end{equation}
\end{lemma}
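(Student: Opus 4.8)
The plan is to exploit the determinantal formula (\ref{eq:char_pol_general}), which expresses the characteristic polynomial of the truncation $J_{n}$ through $\mathfrak{F}$, together with the locally uniform convergence of the finite sections to $F_{\mathcal{J}}$ provided by Lemma~\ref{lem:convlem_F}. Fix $\mu\in\Lambda(\mathcal{J})\setminus\der(\lambda)$ and a sequence of eigenvalues $\mu_{n}\in\spec(J_{n})$ with $\mu_{n}\to\mu$, and set $r=r(\mu)$. Writing $F_{n}(z):=\mathfrak{F}(\{\gamma_{l}^{\,2}/(\lambda_{l}-z)\}_{l=1}^{n})$, Proposition~\ref{prop:charfce_finJ} with $v=w$ (so $\gamma_{k}^{\pm}=\gamma_{k}$) gives $\det(J_{n}-zI_{n})=\prod_{k=1}^{n}(\lambda_{k}-z)\,F_{n}(z)$, and this determinant vanishes at $z=\mu_{n}$ because $\mu_{n}$ is an eigenvalue.

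The essential preliminary observation is that $\mu$ is isolated from the rest of $\overline{\{\lambda_{k}\}}$: since $\mu\notin\der(\lambda)$, there is $\delta>0$ with $\overline{\{\lambda_{k};\,k\in\mathbb{N}\}}\cap B(\mu,\delta)\subseteq\{\mu\}$, so for $\epsilon<\delta$ the closed disk $\bar B(\mu,\epsilon)$ meets $\overline{\{\lambda_{k}\}}$ in at most the point $\mu$. For $n$ large enough that all indices with $\lambda_{k}=\mu$ are $\le n$, I factor out the $r$ coinciding terms and define $\phi_{n}(z):=(z-\mu)^{r}F_{n}(z)$; then $\phi_{n}$ is analytic on $B(\mu,\delta)$ (the pole of $F_{n}$ at $\mu$ has order at most $r$) and
\[
\det(J_{n}-zI_{n})=(-1)^{r}\Bigg(\prod_{\substack{k\le n\\ \lambda_{k}\ne\mu}}(\lambda_{k}-z)\Bigg)\phi_{n}(z).
\]
Because $\mu_{n}\to\mu$ while every $\lambda_{k}\ne\mu$ stays at distance $\ge\delta$ from $\mu$, for large $n$ one has $\mu_{n}\ne\lambda_{k}$ whenever $\lambda_{k}\ne\mu$; hence the displayed product is nonzero at $z=\mu_{n}$, and the vanishing of the determinant forces $\phi_{n}(\mu_{n})=0$.

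It remains to pass to the limit. Put $\phi(z):=(z-\mu)^{r}F_{\mathcal{J}}(z)$, which is analytic on $B(\mu,\delta)$ since $F_{\mathcal{J}}$ is meromorphic there with a pole at $\mu$ of order at most $r$. On the circle $\{|z-\mu|=\epsilon\}\subset\mathbb{C}_{0}^{\lambda}$, Lemma~\ref{lem:convlem_F} yields $F_{n}\to F_{\mathcal{J}}$ uniformly, whence $\phi_{n}\to\phi$ uniformly there; by the maximum modulus principle $\phi_{n}\to\phi$ uniformly on all of $\bar B(\mu,\epsilon)$. Combining $\phi_{n}(\mu_{n})=0$, $\mu_{n}\to\mu$, the continuity of $\phi$, and this uniform convergence through the estimate $|\phi(\mu)|\le|\phi(\mu)-\phi(\mu_{n})|+|\phi(\mu_{n})-\phi_{n}(\mu_{n})|$ gives $\phi(\mu)=0$. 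Since $\phi(\mu)=\lim_{u\to\mu}(u-\mu)^{r}F_{\mathcal{J}}(u)=\xi_{0}(\mu)$, this is precisely $\mu\in\mathfrak{Z}(\mathcal{J})$.

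I expect the main obstacle to be the case $r\ge1$, i.e. when $\mu$ coincides with some $\lambda_{k}$ and $F_{\mathcal{J}}$ genuinely has a pole at $\mu$: there one cannot argue with $F_{n}(\mu_{n})=0$ directly. The two points requiring care are the desingularization by the factor $(z-\mu)^{r}$ combined with the separation of $\mu$ from the remaining poles $\lambda_{k}\ne\mu$ (so that the extraneous factors do not vanish at the approximating eigenvalues $\mu_{n}$), and the upgrade of the convergence from the circle to the center $\mu$, for which the analyticity of $\phi_{n},\phi$ and the maximum principle are the natural tools. For $\mu\in\mathbb{C}_{0}^{\lambda}$, i.e. $r=0$, the whole argument collapses to the statement that $F_{n}(\mu_{n})=0$ together with $F_{n}\to F_{\mathcal{J}}$ locally uniformly forces $F_{\mathcal{J}}(\mu)=0$.
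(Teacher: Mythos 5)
Your proof is correct and follows essentially the same route as the paper: you use the determinantal formula (\ref{eq:char_pol_general}) to get $\phi_{n}(\mu_{n})=0$ for the desingularized finite characteristic functions $\phi_{n}(z)=(z-\mu)^{r}F_{n}(z)$, and pass to the limit via the locally uniform convergence of Lemma~\ref{lem:convlem_F}, exactly as the paper does with its functions $\tilde{F}^{n}$. The only difference is cosmetic and to your credit: where the paper merely asserts that "a slight modification of Lemma~\ref{lem:convlem_F}" yields uniform convergence on a neighborhood of $\mu$ (including the removable singularity), you supply the detail explicitly with the maximum modulus principle, upgrading uniform convergence on the circle $|z-\mu|=\epsilon\subset\mathbb{C}_{0}^{\lambda}$ to the closed disk.
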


\begin{proof} Let $z\in\Lambda(\mathcal{J})\setminus\der(\lambda)$.
Then, by definition, there exists a sequence $\{\mu_{n}\}$ such that
$\mu_{n}\in\spec(J_{n})$ and $\mu_{n}\rightarrow z$ as $n\rightarrow\infty$.
Considering sufficiently large indices $n$ one may assume that $\mu_{n}$
does not coincide with any member $\lambda_{k}$ of the sequence $\lambda$,
possibly except of $z$. Using (\ref{eq:char_pol_general}) one gets,
for all sufficiently large $n$, \begin{equation}
0=\det(\mu_{n}I_{n}-J_{n})=\left(\prod_{\substack{k=1\\
\lambda_{k}\neq z}
}^{n}(\mu_{n}-\lambda_{k})\right)\lim_{u\to\mu_{n}}\,(u-z)^{r(z)}\,\mathfrak{F}\!\left(\left\{ \frac{\gamma_{l}^{\,2}}{\lambda_{l}-u}\right\} _{l=1}^{n}\right)\!.\label{eq:lim_mu_z}\end{equation}
Denote temporarily\[
\tilde{F}^{n}(u)=(u-z)^{r(z)}\,\mathfrak{F}\!\left(\left\{ \frac{\gamma_{l}^{\,2}}{\lambda_{l}-u}\right\} _{l=1}^{n}\right)\!,\ u\in\mathbb{C}_{0}^{\lambda},\ n\in\mathbb{N}\cup\{\infty\}.\]
All functions $\tilde{F}^{n}(u)$ have a removable singularity at
$u=z$. Moreover, with some slight modification of Lemma~\ref{lem:convlem_F}
one can show that $\tilde{F}^{n}(u)\to\tilde{F}^{\infty}(u)$ as $n\to\infty$
uniformly on a neighborhood of the point $z$. Since all the involved
functions are analytic on this domain, one knows that the corresponding
derivatives converge uniformly as well. Equation (\ref{eq:lim_mu_z})
means that $\tilde{F}^{n}(\mu_{n})=0$ for all $n$ sufficiently large.
One concludes that\[
\lim_{u\to z}\,(u-z)^{r(z)}\, F_{\mathcal{J}}(u)=\tilde{F}^{\infty}(z)=\lim_{n\to\infty}\,\tilde{F}^{n}(\mu_{n})=0.\]
Hence $z\in\mathfrak{Z}(\mathcal{J})$. \end{proof}

Denote by $P_{n}$, $n\in\mathbb{N}$, the orthogonal projection on
$\ell^{2}(\mathbb{N})$ onto the linear hull of the first $n$ vectors
of the standard basis. Then the finite-dimensional operator $J_{n}$
introduced in (\ref{eq:def Jn}) can be identified with $P_{n}JP_{n}$
restricted to the subspace $\Ran P_{n}$. Here $J$ is any operator
such that $\dot{J}\subset J$ (see Definition~\ref{def:Jmax_Jmin}).

\begin{lemma} \label{lem:JS_strong_conv} Let $J$ be any operator
on $\ell^{2}(\mathbb{N})$ fulfilling $\dot{J}\subset J$. Then (in
$\ell^{2}(\mathbb{N})$) \[
\lim_{n\rightarrow\infty}P_{n}JP_{n}x=Jx,\ \forall x\in\Dom(J)\cap\Dom(WU^{\ast}).\]
\end{lemma}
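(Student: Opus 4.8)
The plan is to reduce everything to the tridiagonal structure of $\mathcal{J}$ together with two elementary tail estimates. First I would split the error by inserting $P_n J x$: writing
\[
P_n J P_n x - Jx = P_n J (P_n - I) x - (I - P_n) J x ,
\]
the second summand tends to $0$ in $\ell^2(\mathbb{N})$ simply because $Jx\in\ell^2(\mathbb{N})$ (as $x\in\Dom(J)$) and $(I-P_n)Jx$ is its truncated tail. So the whole matter rests on the first summand, which equals $-P_n J (I-P_n)x$.

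For this I would use that $J$ acts as the formal matrix product, i.e. $Jy=\mathcal{J}y$ for every $y\in\Dom(J)$; this holds because the matrix of $J$ in the standard basis is $\mathcal{J}$, equivalently $J\subset J_{\text{max}}$. Since $P_n x\in\Dom(\dot J)\subset\Dom(J)$, the vector $(I-P_n)x=x-P_n x$ again lies in $\Dom(J)$, and $J(I-P_n)x=\mathcal{J}(I-P_n)x$. The key observation is then a \emph{seam} computation exploiting that $\mathcal{J}$ couples only neighbouring indices. Putting $y=(I-P_n)x$, so that $y_k=0$ for $k\leq n$ and $y_k=x_k$ for $k>n$, one finds for $i\leq n$ that $(\mathcal{J}y)_i=w_{i-1}y_{i-1}+\lambda_i y_i+w_i y_{i+1}$ vanishes whenever $i\leq n-1$, while for $i=n$ only the term $w_n y_{n+1}=w_n x_{n+1}$ survives. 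Hence
\[
P_n J (I-P_n)x = w_n x_{n+1}\, e_n ,
\]
a single-entry vector at the truncation index.

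It then remains to check that $|w_n x_{n+1}|\to 0$. But $(WU^{\ast}x)_n = w_n(U^{\ast}x)_n = w_n x_{n+1}$, so the hypothesis $x\in\Dom(WU^{\ast})$ says precisely that $\{w_n x_{n+1}\}_{n}\in\ell^2(\mathbb{N})$; in particular $w_n x_{n+1}\to 0$, whence $\|P_n J(I-P_n)x\|=|w_n x_{n+1}|\to 0$. Combining the two pieces gives $P_n J P_n x\to Jx$, as claimed.

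I expect the only point requiring care to be the identification $Jy=\mathcal{J}y$ on $\Dom(J)$ together with the bookkeeping that $(I-P_n)x\in\Dom(J)$; everything else is the two-line seam computation and the two tail limits. The role of the assumption $x\in\Dom(WU^{\ast})$ is exactly to annihilate the surviving off-diagonal coupling $w_n x_{n+1}$ at the truncation index, which is the one place where a merely $\ell^2$ vector $x$, without weight control, would not suffice.
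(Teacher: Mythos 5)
Your proof is correct and follows essentially the same route as the paper's: the identical decomposition $Jx-P_{n}JP_{n}x=(I-P_{n})Jx+P_{n}J(I-P_{n})x$, the same tridiagonal seam computation $P_{n}J(I-P_{n})x=w_{n}x_{n+1}e_{n}$, and the same identification $w_{n}x_{n+1}=(WU^{*}x)_{n}\to0$ from $x\in\Dom(WU^{*})$. The only cosmetic difference is that the paper exploits the mutual orthogonality of the two error terms (supported on indices $>n$ and at the single index $n$, respectively) to write $\|(J-P_{n}JP_{n})x\|^{2}$ as a Pythagorean sum where you use the triangle inequality, and your explicit bookkeeping that $Jy=\mathcal{J}y$ on $\Dom(J)$ (i.e.\ $J\subset J_{\text{max}}$, which holds in all the paper's applications, e.g.\ for self-adjoint extensions of $J_{\text{min}}$ with $\mathcal{J}$ real) is used implicitly by the paper in exactly the same step.
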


\begin{proof} Let $x\in\Dom(J)\cap\Dom(WU^{\ast})$. Since\[
(J-P_{n}JP_{n})x=(I-P_{n})Jx+P_{n}J(I-P_{n})x\]
one has\[
\|(J-P_{n}JP_{n})x\|^{2}=\|(I-P_{n})Jx\|^{2}+|w_{n}|^{2}|\langle x,e_{n+1}\rangle|^{2}=\|(I-P_{n})Jx\|^{2}+|\langle WU^{*}x,e_{n}\rangle|^{2}.\]
The RHS clearly tends to zero as $n\rightarrow\infty$. \end{proof}

\begin{lemma} \label{prop:spec_in_Lambda} Suppose that $\mathcal{J}$
is real and let $J$ be a self-adjoint extension of the symmetric
operator $J_{\text{min}}$. If $\Dom(J)\subset\Dom(WU^{\ast})$ then
$\spec(J)\subset\Lambda(\mathcal{J})$. If, in addition, condition
(\ref{eq:assum_sum_w}) is fulfilled for at least one $z_{0}\in\mathbb{C}_{0}^{\lambda}$
then \begin{equation}
\mathfrak{Z}(\mathcal{J})=\spec(J)\setminus\der(\lambda)=\spec_{p}(J)\setminus\der(\lambda)=\Lambda(\mathcal{J})\setminus\der(\lambda).\label{eq:Z_specpoint_Lambda}\end{equation}
 \end{lemma}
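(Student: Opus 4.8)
The plan is to prove the inclusion $\spec(J)\subset\Lambda(\mathcal{J})$ first, and then to assemble the chain of equalities from results already available. For the inclusion I would exploit the strong convergence supplied by Lemma~\ref{lem:JS_strong_conv} together with a standard spectral approximation principle for self-adjoint operators. Concretely, fix $\mu\in\spec(J)$ and a real number $\alpha\neq\mu$, and introduce the bounded self-adjoint operators $\tilde J_n=P_nJP_n+\alpha(I-P_n)$ on $\ell^2(\mathbb{N})$, whose spectrum equals $\spec(J_n)\cup\{\alpha\}$ in view of the identification of $J_n$ with $P_nJP_n$ restricted to $\Ran P_n$. Since $\Dom(J)\subset\Dom(WU^\ast)$, the hypothesis of Lemma~\ref{lem:JS_strong_conv} holds with $\Dom(J)\cap\Dom(WU^\ast)=\Dom(J)$, hence $P_nJP_nx\to Jx$ for every $x\in\Dom(J)$; adding $\alpha(I-P_n)x\to0$ gives $\tilde J_nx\to Jx$ on $\Dom(J)$, which is trivially a core for $J$.

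Strong convergence on a core entails strong resolvent convergence of the self-adjoint operators $\tilde J_n$ to $J$, and the standard spectral inclusion principle for strong resolvent convergence then guarantees that $\mu$ is the limit of a sequence $\mu_n\in\spec(\tilde J_n)$. Because $\alpha\neq\mu$, for all large $n$ one has $\mu_n\neq\alpha$, so $\mu_n\in\spec(J_n)$ and therefore $\mu\in\Lambda(\mathcal{J})$. To keep the argument self-contained I would present the quantitative version directly: assuming for contradiction that $\mu\notin\Lambda(\mathcal{J})$, one has $\dist(\spec(J_n),\mu)\not\to0$, and since $\alpha\neq\mu$ also $\dist(\spec(\tilde J_n),\mu)\not\to0$, so along a subsequence $\dist(\spec(\tilde J_n),\mu)\geq\varepsilon>0$. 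Self-adjointness yields $\|(\tilde J_n-\mu)x\|\geq\varepsilon\|x\|$, and passing to the limit on $\Dom(J)$ produces $\|(J-\mu)x\|\geq\varepsilon\|x\|$; thus $J-\mu$ is bounded below with dense range (as $J$ is self-adjoint and $\mu$ real), so $\mu\in\rho(J)$, contradicting $\mu\in\spec(J)$. The role of the padding constant $\alpha$ is precisely to move the spurious eigenvalue of the truncation away from $\mu$, and avoiding this artefact is the one point that needs care.

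For the chain of equalities I would invoke Corollary~\ref{cor:specJ_proper_Znull_real}: under the convergence condition the real matrix $\mathcal{J}$ determines a unique self-adjoint operator $J_{\text{min}}=J_{\text{max}}$ with which the self-adjoint extension $J$ in the hypothesis necessarily coincides, and one has $\mathfrak{Z}(\mathcal{J})=\spec(J)\setminus\der(\lambda)=\spec_p(J)\setminus\der(\lambda)$. It then remains to splice in the last set: the inclusion just proved gives $\spec(J)\setminus\der(\lambda)\subset\Lambda(\mathcal{J})\setminus\der(\lambda)$, while Lemma~\ref{prop:Lambda_sub_Z} gives $\Lambda(\mathcal{J})\setminus\der(\lambda)\subset\mathfrak{Z}(\mathcal{J})$. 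Combining these,
\[
\mathfrak{Z}(\mathcal{J})=\spec(J)\setminus\der(\lambda)\subset\Lambda(\mathcal{J})\setminus\der(\lambda)\subset\mathfrak{Z}(\mathcal{J}),
\]
so all four sets in (\ref{eq:Z_specpoint_Lambda}) coincide, completing the proof.
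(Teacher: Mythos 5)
Your proposal is correct and follows essentially the same route as the paper: a contradiction argument combining the lower bound $\|(A-\mu)f\|\geq\dist(\mu,\spec A)\|f\|$ for self-adjoint operators with the strong convergence of Lemma~\ref{lem:JS_strong_conv}, and then the identical assembly of (\ref{eq:Z_specpoint_Lambda}) from Corollary~\ref{cor:specJ_proper_Znull_real} and Lemma~\ref{prop:Lambda_sub_Z}. The only cosmetic difference is your padding operator $\tilde{J}_n=P_nJP_n+\alpha(I-P_n)$; the paper instead applies the distance estimate to $P_{n_k}x$ within $\Ran P_{n_k}$, getting $\|(\mu P_{n_k}-P_{n_k}JP_{n_k})x\|\geq d\,\|P_{n_k}x\|$ and using $\|P_{n_k}x\|\to\|x\|$, which renders the auxiliary constant $\alpha$ unnecessary.
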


\begin{proof} First suppose that, on the contrary, $\mu\in\spec(J)$
and $\mu\notin\Lambda(\mathcal{J})$. Then there exist $d>0$ and
a subsequence $\{J_{n_{k}}\}_{k=1}^{\infty}$ such that $\dist(\mu,\spec J_{n_{k}})\geq d$
for all $k$. Let $x$ be an arbitrary vector from $\Dom(J)\subset\Dom(WU^{\ast})$.

As is well known and easy to verify, if $A$ is a self-adjoint operator
such that $0$ belongs to its resolvent set then $\|A^{-1}\|\leq1/\dist(0,\spec A)$.
This implies that\[
\forall f\in\Dom A,\,\|Af\|\geq\dist(0,\spec A)\|f\|.\]
Applying this observation to the operators $J_{n_{k}}$ one gets \[
\|(\mu P_{n_{k}}-P_{n_{k}}JP_{n_{k}})x\|\geq d\|P_{n_{k}}x\|.\]
Sending $k\to\infty$ and referring to Lemma~\ref{lem:JS_strong_conv}
one concludes that $\|(\mu-J)x\|\geq d\|x\|$, for every $x\in\Dom(J)$.
By the Weyl criterion $\mu\in\rho(J)$, a contradiction.

If, in addition, (\ref{eq:assum_sum_w}) is fulfilled for at least
one $z_{0}\in\mathbb{C}_{0}^{\lambda}$ then we know from Corollary~\ref{cor:specJ_proper_Znull_real}
that $J$ is in fact unique and the first two equalities in (\ref{eq:Z_specpoint_Lambda})
are true. So combination of Lemma~\ref{prop:Lambda_sub_Z} with the
first part of the current lemma yields\[
\mathfrak{Z}(\mathcal{J})=\spec(J)\setminus\der(\lambda)\subset\Lambda(\mathcal{J})\setminus\der(\lambda)\subset\mathfrak{Z}(\mathcal{J}).\]
This completes the proof. \end{proof}

\begin{theorem} \label{thm:spec_eq_Lbd} Suppose that the Jacobi
matrix $\mathcal{J}$ is real and let condition (\ref{eq:assum_sum_w})
be fulfilled for at least one $z_{0}\in\mathbb{C}_{0}^{\lambda}$.
If any of the following two conditions is satisfied:

\smallskip(i)~$\lim_{n\rightarrow\infty}w_{n}=0$,

\smallskip(ii)~$\lim_{n\rightarrow\infty}|\lambda_{n}|=\infty$
and\begin{equation}
\limsup_{n\to\infty}\frac{|w_{n}|}{|\lambda_{n}|}\,+\,\limsup_{n\to\infty}\frac{|w_{n}|}{|\lambda_{n+1}|}\,<1,\label{eq:limsup_w_lbd}\end{equation}
then \begin{equation}
\Lambda(\mathcal{J})=\spec(J),\label{eq:Lambda_eq_spec}\end{equation}
where $J$ is the unique self-adjoint operator whose matrix in the
standard basis equals $\mathcal{J}$. \end{theorem}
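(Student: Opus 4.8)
The plan is to establish the two inclusions $\spec(J)\subset\Lambda(\mathcal{J})$ and $\Lambda(\mathcal{J})\subset\spec(J)$ separately, relying on the machinery already assembled in Lemmas~\ref{prop:Lambda_sub_Z} and \ref{prop:spec_in_Lambda}. Since $\mathcal{J}$ is real and (\ref{eq:assum_sum_w}) holds, Corollary~\ref{cor:specJ_proper_Znull_real} guarantees that $J:=J_{\text{min}}=J_{\text{max}}$ is the unique self-adjoint operator with matrix $\mathcal{J}$, so Lemma~\ref{prop:spec_in_Lambda} becomes available as soon as its hypothesis $\Dom(J)\subset\Dom(WU^{\ast})$ is verified. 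Granting that hypothesis, Lemma~\ref{prop:spec_in_Lambda} yields at once $\spec(J)\subset\Lambda(\mathcal{J})$ together with the chain of equalities (\ref{eq:Z_specpoint_Lambda}), in particular $\Lambda(\mathcal{J})\setminus\der(\lambda)=\spec(J)\setminus\der(\lambda)$. The whole argument therefore reduces to two tasks: (a) checking $\Dom(J)\subset\Dom(WU^{\ast})$ under each of (i), (ii); and (b) disposing of the part of $\Lambda(\mathcal{J})$ lying in $\der(\lambda)$.

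Under (ii) both tasks are short. Here $|\lambda_{n}|\to\infty$ forces $\der(\lambda)=\emptyset$, so (b) is vacuous and (\ref{eq:Z_specpoint_Lambda}) already reads $\Lambda(\mathcal{J})=\spec(J)$. For (a) I would show that the off-diagonal part $A:=WU^{\ast}+UW$ is $L$-bounded with relative bound strictly less than one. A direct computation gives $\|WU^{\ast}x\|^{2}=\sum_{k}|w_{k-1}|^{2}|x_{k}|^{2}$ and $\|UWx\|^{2}=\sum_{k}|w_{k}|^{2}|x_{k}|^{2}$, so that splitting off finitely many low-index terms yields estimates $\|WU^{\ast}x\|\leq(a_{1}+\epsilon)\|Lx\|+C_{\epsilon}\|x\|$ and $\|UWx\|\leq(a_{2}+\epsilon)\|Lx\|+C_{\epsilon}\|x\|$ with $a_{1}=\limsup_{n}|w_{n}|/|\lambda_{n+1}|$ and $a_{2}=\limsup_{n}|w_{n}|/|\lambda_{n}|$. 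By (\ref{eq:limsup_w_lbd}) one has $a_{1}+a_{2}<1$, so $A$ is $L$-bounded with relative bound below one. As $L$ is self-adjoint (because $\mathcal{J}$ is real) and $A$ is symmetric, the Kato--Rellich theorem makes $L+A$ self-adjoint on $\Dom(L)$; by the uniqueness from Corollary~\ref{cor:specJ_proper_Znull_real} this operator must coincide with $J$, whence $\Dom(J)=\Dom(L)\subset\Dom(WU^{\ast})$, the last inclusion being exactly the $L$-boundedness of $WU^{\ast}$.

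Under (i), task (a) is immediate: $w_{n}\to0$ makes $W$, hence $WU^{\ast}$, a bounded operator, so $\Dom(WU^{\ast})=\ell^{2}(\mathbb{N})$ and the inclusion is trivial. For task (b) I would exploit that $WU^{\ast}$ and $UW$ are weighted shifts with weights tending to zero, hence compact as norm limits of their finite-rank truncations; thus $A$ is compact and $J=L+A$ is a compact perturbation of the self-adjoint diagonal operator $L$. By Weyl's theorem on the invariance of the essential spectrum, $\spec_{ess}(J)=\spec_{ess}(L)=\der(\lambda)$, and since the essential spectrum is contained in the spectrum, $\der(\lambda)\subset\spec(J)$. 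Consequently, for any $\mu\in\Lambda(\mathcal{J})$: if $\mu\notin\der(\lambda)$ then $\mu\in\Lambda(\mathcal{J})\setminus\der(\lambda)=\spec(J)\setminus\der(\lambda)\subset\spec(J)$, while if $\mu\in\der(\lambda)$ then $\mu\in\der(\lambda)\subset\spec(J)$; either way $\mu\in\spec(J)$. Combined with $\spec(J)\subset\Lambda(\mathcal{J})$ coming from Lemma~\ref{prop:spec_in_Lambda}, this gives the equality (\ref{eq:Lambda_eq_spec}).

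I expect the relative-boundedness estimate in case (ii), together with the identification of the Kato--Rellich sum $L+A$ with the canonical operator $J$, to be the main technical point: the delicate part is controlling the finitely many bad low-index terms without spoiling the relative bound, and confirming that the self-adjoint operator produced by Kato--Rellich genuinely coincides with $J_{\text{min}}=J_{\text{max}}$ rather than some other closed extension (which follows because a self-adjoint operator admits no proper symmetric extension). The essential-spectrum argument in case (i) is comparatively routine once the compactness of the off-diagonal part is noted.
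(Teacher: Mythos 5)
Your proposal is correct and takes essentially the same route as the paper's proof: uniqueness of the self-adjoint $J$ from Corollary~\ref{cor:specJ_proper_Znull_real}, compactness of $WU^{*}+UW$ plus Weyl's theorem on the essential spectrum under (i), relative boundedness and Kato--Rellich under (ii), and Lemma~\ref{prop:spec_in_Lambda} to obtain $\spec(J)\subset\Lambda(\mathcal{J})$ and the equality off $\der(\lambda)$. The only (immaterial) technical variation is in case (ii): you derive the relative bound by an $\epsilon$-splitting of the limsup condition with a $C_{\epsilon}\|x\|$ remainder, whereas the paper first absorbs the finitely many bad low-index terms into a bounded Hermitian diagonal perturbation (which leaves the domain unchanged) and then bounds $\|(WU^{*}+UW)L^{-1}\|\leq\alpha<1$ directly.
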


\begin{proof} According to Corollary~\ref{cor:specJ_proper_Znull_real},
$J=J_{\text{min}}=J_{\text{max}}$ is the unique self-adjoint operator
determined by $\mathcal{J}$.

(i) Let $\lim_{n\rightarrow\infty}w_{n}=0$. Then we claim that\begin{equation}
\der(\lambda)=\spec_{ess}(J)\subset\spec(J)\subset\Lambda(\mathcal{J}).\label{eq:aux_speccssJ_specJ_Lambda}\end{equation}
In fact, the Hermitian operator $WU^{*}+UW$ is compact and\begin{equation}
J=L+(WU^{*}+UW)\label{eq:J_eq_LplusWW}\end{equation}
(an operator sum, $\Dom J=\Dom L$). But the Weyl theorem tells us
that $J$ has the same essential spectrum as $L$, which is nothing
but $\der(\lambda)$. The last inclusion in (\ref{eq:aux_speccssJ_specJ_Lambda})
follows from Lemma~\ref{prop:spec_in_Lambda}. Again by Lemma~\ref{prop:spec_in_Lambda},
$\spec(J)\setminus\der(\lambda)=\Lambda(\mathcal{J})\setminus\der(\lambda)$,
which jointly with (\ref{eq:aux_speccssJ_specJ_Lambda}) implies (\ref{eq:Lambda_eq_spec}).

(ii) Suppose that $|\lambda_{n}|\rightarrow\infty$ and (\ref{eq:limsup_w_lbd})
is true. Then clearly $\der(\lambda)=\emptyset$. We wish to verify
the assumptions of Lemma~\ref{prop:spec_in_Lambda}. Since the sum
of a self-adjoint operator with a bounded Hermitian operator does
not change the domain one may suppose, without loss of generality,
that\[
|\lambda_{n}|\geq1,\ \forall n,\ \,\text{and}\,\ \sup_{n}\frac{|w_{n}|}{|\lambda_{n}|}+\sup_{n}\frac{|w_{n}|}{|\lambda_{n+1}|}\leq\alpha<1,\]
where $\alpha$ is a constant. But this means that\[
\|(WU^{*}+UW)L^{-1}\|\leq\|WU^{*}L^{-1}\|+\|WL^{-1}\|\leq\alpha,\]
and hence $(WU^{*}+UW)$ is $L$-bounded with a relative bound smaller
than $1$. By the Kato-Rellich theorem \cite[Theorem~V.4.3]{Kato},
(\ref{eq:J_eq_LplusWW}) is again true. Moreover, \[
\Dom J=\Dom L\subset\Dom(WU^{\ast}).\]
Hence in this case, too, Lemma~\ref{prop:spec_in_Lambda} implies
(\ref{eq:Lambda_eq_spec}). \end{proof}

\begin{remark} Observe that (\ref{eq:assum_sum_w}) already implies
that\[
\lim_{n\to\infty}\left|\frac{w_{n}^{\,2}}{\lambda_{n}\lambda_{n+1}}\right|=0.\]
Thus assumption (\ref{eq:limsup_w_lbd}) may well turn out to be superfluous
in some concrete examples. \end{remark}

\section{Examples \label{sec:example}}

\subsection{Explicitly solvable examples of point spectra}

In all examples presented below the Jacobi matrix $\mathcal{J}$ is
real and symmetric. The set of accumulation points $\der(\lambda)$
is either empty or the one-point set $\{0\}$. Moreover, condition
(\ref{eq:assum_sum_w}) is readily checked to be satisfied for any
$z_{0}\in\mathbb{C}\setminus\mathbb{R}$. Thus Corollary~\ref{cor:specJ_proper_Znull_real}
applies to all these examples and may be used to determine the spectrum
of the unique self-adjoint operator $J$ determined by $\mathcal{J}$
(recall also definition (\ref{eq:def_Z}) of the zero set of the characteristic
function). In addition, Proposition~\ref{prop:ZinSpec} and equation
(\ref{eq:def_xi_k}) (or (\ref{eq:def_xik_new})) provide us with
explicit formulas for the corresponding eigenvectors.

\begin{example} \label{example:1_lindag} This is an example of an
unbounded Jacobi operator. Let $\lambda_{n}=n\alpha$, where $\alpha\in\mathbb{R}\setminus\{0\}$,
and $w_{n}=w>0$ for all $n\in\mathbb{N}$. Thus \[
J=\begin{pmatrix}\alpha & w\\
w & 2\alpha & w\\
 & w & 3\alpha & w\\
 &  & \ddots & \ddots & \ddots\end{pmatrix}\!.\]
One has $\der(\lambda)=\emptyset$ and $\spec(J)=\mathfrak{Z}(\mathcal{J})$.
Using (\ref{eq:BesselJ_rel_F}) one derives that \[
\mathfrak{F}\!\left(\!\left\{ \frac{\gamma_{k}^{2}}{\alpha k-z}\right\} _{k=r+1}^{\infty}\right)\!=\mathfrak{F}\!\left(\!\left\{ \frac{w}{\alpha k-z}\right\} _{k=r+1}^{\infty}\right)\!=\left(\frac{w}{\alpha}\right)^{-r+z/\alpha}\Gamma\!\left(1+r-\frac{z}{\alpha}\right)\! J_{r-z/\alpha}\!\left(\frac{2w}{\alpha}\right)\]
for $r\in\mathbb{Z}_{+}$. It follows that\begin{equation}
\spec(J)=\left\{ z\in\mathbb{C};\, J_{-z/\alpha}\!\left(\frac{2w}{\alpha}\right)=0\right\} .\label{eq:specJ_BesselJ}\end{equation}
For the corresponding eigenvectors $v(z)$ one obtains \[
v_{k}(z)=(-1)^{k}J_{k-z/\alpha}\!\left(\frac{2w}{\alpha}\right)\!,\ k\in\mathbb{N}.\]

Let us remark that the characterization of the spectrum of $J$, as
given in (\ref{eq:specJ_BesselJ}), was observed earlier by several
authors, see \cite[Sec.~3]{Ikebe_atal} and \cite[Thm.~3.1]{Petropoulou_etal}.
We discuss in more detail solutions of the characteristic equation
$J_{-z}(2w)=0$ below in Subsection~\ref{subsec:example_lin_diag}.
\end{example}

Further we describe four examples in which the Jacobi matrix always
represents a compact operator on $\ell^{2}(\mathbb{N})$. We shall
make use of the following construction. Let us fix positive constants
$c$, $\alpha$ and $\beta$. For $n\in\mathbb{Z}_{+}$ we define
the $c$-deformed number $n$ as\[
[n]_{c}=\sum_{i=0}^{n-1}c^{i}.\]
Hence $[n]_{c}=(c^{n}-1)/(c-1)$ if $c\neq1$ and $[n]_{c}=n$ for
$c=1$. Notice that \begin{equation}
\frac{[n+m-1]_{c}-[n-1]_{c}}{[m]_{c}}=[n]_{c}-[n-1]_{c},\ \,\forall n,m\in\mathbb{N}.\label{eq:const_dist}\end{equation}
As for the Jacobi matrix $\mathcal{J}$, we put

\begin{equation}
\lambda_{n}=\frac{1}{\alpha+[n-1]_{c}}\,,\ w_{n}=\beta\sqrt{\lambda_{n}-\lambda_{n+1}}\,,\quad n=1,2,3,\dots.\label{eq:ex_compact_lbd_w}\end{equation}

We claim that the identity\begin{eqnarray}
 &  & \hskip-2em\sum_{k_{1}=r}^{\infty}\,\sum_{k_{2}=k_{1}+2}^{\infty}\!\,\dots\!\,\sum_{k_{s}=k_{s-1}+2}^{\infty}\nonumber \\
\noalign{\smallskip} &  & \qquad\quad\times\,\frac{w_{k_{1}}^{\,2}}{(\lambda_{k_{1}}-z)(\lambda_{k_{1}+1}-z)}\frac{w_{k_{2}}^{\,2}}{(\lambda_{k_{2}}-z)(\lambda_{k_{2}+1}-z)}\cdots\frac{w_{k_{s}}^{\,2}}{(\lambda_{k_{s}}-z)(\lambda_{k_{s}+1}-z)}\nonumber \\
\noalign{\medskip} &  & \hskip-2em=\,\frac{(-1)^{s}}{z^{s}}\,\beta^{2s}\prod_{i=1}^{s}\left([\, i\,]_{c}\!\left(1-\frac{z}{\lambda_{r+i-1}}\right)\right)^{\!-1}\label{eq:sumF_const_dist}\end{eqnarray}
holds for every $r,s\in\mathbb{N}$. In fact, to show (\ref{eq:sumF_const_dist})
one can proceed by mathematical induction in $s$. The case $s=1$
as well as all induction steps are straightforward consequences of
the equality\begin{eqnarray*}
 &  & \frac{w_{j}^{\,2}}{(\lambda_{j}-z)(\lambda_{j+1}-z)}\,\prod_{i=1}^{s-1}\left([\, i\,]_{c}\!\left(1-\frac{z}{\lambda_{j+i+1}}\right)\right)^{\!-1}\\
\noalign{\smallskip} &  & =\,-\frac{\beta^{2}}{z}\left(\,\prod_{i=1}^{s}\left([\, i\,]_{c}\!\left(1-\frac{z}{\lambda_{j+i-1}}\right)\right)^{\!-1}-\prod_{i=1}^{s}\left([\, i\,]_{c}\!\left(1-\frac{z}{\lambda_{j+i}}\right)\right)^{\!-1}\right)\!,\end{eqnarray*}
with $s=1,2,3,\ldots$, which in turn can be verified with the aid
of (\ref{eq:const_dist}).

A principal consequence one can draw from (\ref{eq:sumF_const_dist})
is that \begin{equation}
\mathfrak{F}\!\left(\left\{ \frac{\gamma_{n}^{\,2}}{\lambda_{n}-z}\right\} _{n=r+1}^{\infty}\right)=\sum_{s=0}^{\infty}\,\frac{\beta^{2s}}{z^{s}}\,\prod_{i=1}^{s}\left([\, i\,]_{c}\!\left(1-\frac{z}{\lambda_{r+i}}\right)\right)^{\!-1}\label{eq:F_T_r_spec_comp}\end{equation}
holds for all $r\in\mathbb{Z}_{+}$.

\begin{example} \label{example:2_1over_n} In (\ref{eq:ex_compact_lbd_w}),
let us put $c=1$ and $\alpha=1$ while $\beta$ is arbitrary positive.
Then $\lambda_{n}=1/n$, $w_{n}=\beta/\sqrt{n(n+1)}$~, for all $n\in\mathbb{N}$,
and so\[
J=\begin{pmatrix}1 & \beta/\sqrt{2}\\
\beta/\sqrt{2} & 1/2 & \beta/\sqrt{6}\\
 & \beta/\sqrt{6} & 1/3 & \beta/\sqrt{12}\\
 &  & \ddots & \ddots & \ddots\end{pmatrix}\!.\]
One finds that\begin{eqnarray}
\mathfrak{F}\!\left(\left\{ \frac{\gamma_{n}^{\,2}}{\lambda_{n}-z}\right\} _{n=k+1}^{\infty}\right) & = & \sum_{s=0}^{\infty}\,\frac{\beta^{2s}}{s!\, z^{s}}\,\prod_{j=1}^{s}\frac{1}{1-(k+j)z}\,=\,{}_{0}F_{1}\!\left(k+1-\frac{1}{z};-\frac{\beta^{2}}{z^{2}}\right)\nonumber \\
\noalign{\smallskip} & = & \left(\frac{z}{\beta}\right)^{\! k-1/z}\Gamma\!\left(k+1-\frac{1}{z}\right)\! J_{k-1/z}\!\left(\frac{2\beta}{z}\right)\!,\label{eq:excomp_aux_F_Bessel}\end{eqnarray}
with $k\in\mathbb{Z}_{+}$, see \cite[Eq.~9.1.69]{AbramowitzStegun}.
Then \[
F_{\mathcal{J}}(z)=\Gamma\!\left(1-\frac{1}{z}\right)\left(\frac{z}{\beta}\right)^{\!-1/z}J_{-1/z}\!\left(\frac{2\beta}{z}\right)\]
and\[
\spec(J)=\left\{ z\in\mathbb{R}\setminus\{0\};\, J_{-1/z}\!\left(\frac{2\beta}{z}\right)=0\right\} \cup\{0\}.\]
For the corresponding eigenvectors $v(z)$ one has\[
v_{k}(z)=\sqrt{k}\, z^{\!-1/z}J_{k-1/z}\!\left(\frac{2\beta}{z}\right)\!,\ k\in\mathbb{N}.\]
\end{example}

\begin{example} \label{example:3_power_q} Now we suppose in (\ref{eq:ex_compact_lbd_w})
that $c>1$ and put $\alpha=1/(c-1)$. Then $\lambda_{n}=(c-1)c^{-n+1}$
and $w_{n}=\beta\,(c-1)c^{-1/2}c^{(-n+1)/2}$. In order to simplify
the expressions let us divide all matrix elements by the term $c-1$.
Furthermore, we also replace the parameter $\beta$ by $\beta c^{1/2}$
and use the substitution $c=1/q$, with $0<q<1$. Thus for the matrix
simplified in this way we have $\lambda_{n}=q^{n-1}$ and $w_{n}=\beta q^{(n-1)/2}$.
Hence\[
J=\begin{pmatrix}1 & \beta\\
\beta & q & \beta\sqrt{q}\\
 & \beta\sqrt{q} & q^{2} & \beta q\\
 &  & \ddots & \ddots & \ddots\end{pmatrix}\!.\]
Using (\ref{eq:F_geom_tw}), equation (\ref{eq:F_T_r_spec_comp})
then becomes\begin{eqnarray*}
\mathfrak{F}\!\left(\left\{ \frac{\gamma_{n}^{\,2}}{\lambda_{n}-z}\right\} _{n=r+1}^{\infty}\right) & = & \sum_{s=0}^{\infty}(-1)^{s}\frac{q^{s(s-1)}}{(q;q)_{s}(q^{r}/z;q)_{s}}\left(\frac{q^{r}\beta}{z}\right)^{\!2s}\\
\noalign{\smallskip} & = & _{0}\phi_{1}(;q^{r}/z;q,-q^{r}\beta^{2}/z^{2}),\ r\in\mathbb{Z}_{+}.\end{eqnarray*}
Thus we get\[
\spec(J)=\left\{ z\in\mathbb{R}\setminus\{0\};\,\Big(\frac{1}{z}\,;q\Big)_{\!\infty}\,\,{}_{0}\phi_{1}\!\!\left(;\frac{1}{z};q,-\frac{\beta^{2}}{z^{2}}\right)=0\right\} \cup\{0\}.\]
The $k$th entry of an eigenvector $v(z)$ corresponding to a nonzero
point of the spectrum $z$ may be written in the form\[
v_{k}(z)=q^{(k-1)(k-2)/4}\left(\frac{\beta}{z}\right)^{\! k-1}\Big(\frac{q^{k}}{z}\,;q\Big)_{\!\infty}\,\,{}_{0}\phi_{1}\!\!\left(;\frac{q^{k}}{z};q,-\frac{q^{k}\beta^{2}}{z^{2}}\right)\!,\ k\in\mathbb{N}.\]
\end{example}

Further we shortly discuss two examples of Jacobi matrices with zero
diagonal and $w\in\ell^{2}(\mathbb{N})$. Such a Jacobi matrix represents
a compact operator (even Hilbert-Schmidt). The characteristic function
is an even function,\[
\mathfrak{F}\!\left(\left\{ \frac{\gamma_{n}^{\,2}}{z}\right\} _{n=1}^{\infty}\right)=\sum_{m=0}^{\infty}\frac{(-1)^{m}}{z^{2m}}\sum_{k_{1}=1}^{\infty}\,\sum_{k_{2}=k_{1}+2}^{\infty}\,\dots\,\sum_{k_{m}=k_{m-1}+2}^{\infty}w_{k_{1}}^{\,2}w_{k_{2}}^{\,2}\dots w_{k_{m}}^{\,2}.\]
Hence the spectrum of $J$ is symmetric with respect to the origin.

Though $0$ always belongs to the spectrum of a compact Jacobi operator,
one may ask under which conditions $0$ is even an eigenvalue (necessarily
simple). An answer can be deduced directly from the eigenvalue equation
(\ref{eq:eigenvalue}). One immediately finds that any eigenvector
$x$ must satisfy $x_{2k}=0$ and $x_{2k-1}=(-1)^{k+1}x_{1}/\gamma_{2k-1}$,
$k\in\mathbb{N}$. Consequently, zero is a simple eigenvalue of $J$
iff \begin{equation}
\sum_{k=1}^{\infty}\frac{1}{\gamma_{2k-1}^{\,\,2}}<\infty.\label{eq:0_simple}\end{equation}

\begin{example} \label{example:4_1_over_nsq_diag0} Let $\lambda_{n}=0$
and $w_{n}=1/\sqrt{(n+\alpha)(n+\alpha+1)}$, $n\in\mathbb{N}$, where
$\alpha>-1$ is fixed. According to (\ref{eq:BesselJ_rel_F}) one
has, for $k\in\mathbb{Z}_{+}$, \[
\mathfrak{F}\!\left(\left\{ \frac{\gamma_{n}^{\,2}}{z}\right\} _{n=k+1}^{\infty}\right)=\mathfrak{F}\!\left(\left\{ \frac{1}{z(\alpha+n)}\right\} _{n=k+1}^{\infty}\right)=\Gamma(\alpha+k+1)z^{\alpha+k}J_{\alpha+k}\!\left(\frac{2}{z}\right)\!.\]
Hence\[
\spec(J)=\left\{ z\in\mathbb{R\setminus}\{0\};\, J_{\alpha}\!\left(\frac{2}{z}\right)=0\right\} \cup\{0\}.\]
The $k$th entry of an eigenvector $v(z)$ corresponding to a nonzero
eigenvalue $z$ may be written in the form\[
v_{k}(z)=\sqrt{\alpha+k}\, z^{\alpha}J_{\alpha+k}\!\left(\frac{2}{z}\right)\!,\ k\in\mathbb{N}.\]

It is well known that for $\alpha\in1/2+\mathbb{Z}$, the Bessel function
$J_{\alpha}(z)$ can be expressed as a linear combination of sine
and cosine functions, the simplest cases being \[
J_{-1/2}(z)=\sqrt{\frac{2}{\pi z}}\,\cos(z),\ J_{1/2}(z)=\sqrt{\frac{2}{\pi z}}\,\sin(z).\]
Thus for $\alpha=\pm1/2$ the spectrum of $J$ is described fully
explicitly. In other cases the eigenvalues of $J$ close to zero can
approximately be determined from the known asymptotic formulas for
large zeros of Bessel functions, see \cite[Eq.~9.5.12]{AbramowitzStegun}.
\end{example}

\begin{example} \label{example:5_power_1_diag0} Suppose that $0<q<1$
and put $\lambda_{n}=0$, $w_{n}=q^{n-1}$, $n\in\mathbb{N}$. With
the aid of (\ref{eq:F_geom_tw}) one derives that\[
\mathfrak{F}\!\left(\left\{ \frac{\gamma_{n}^{\,2}}{z}\right\} _{n=k+1}^{\infty}\right)={}_{0}\phi_{1}(;0;q^{2},-q^{2k}z^{-2}),\ k\in\mathbb{Z}_{+}.\]
It follows that\[
\spec(J)=\{z\in\mathbb{R}\setminus\{0\};\,{}_{0}\phi_{1}(;0;q^{2},-z^{-2})=0\}\cup\{0\}.\]
The components of an eigenvector $v(z)$ corresponding to an eigenvalue
$z\neq0$ may be expressed as\[
v_{k}(z)=q^{(k-1)(k-2)/2}z^{-k+1}\,{}_{0}\phi_{1}(;0;q^{2},-q^{2k}z^{-2}),\ k\in\mathbb{N}.\]

In this example as well as in the previous one, $0$ belongs to the
continuous spectrum of $J$ since the condition (\ref{eq:0_simple})
is not fulfilled. \end{example}

\subsection{Applications of Proposition~\ref{prop:norm_xi}}

Here we apply identity (\ref{eq:xi_sum}) to the five examples of
Jacobi matrices described above. Without going into details, the final
form of the presented identities is achieved after some simple substitutions.
On the other hand, no attempt is made here to optimize the range of
involved parameters; it is basically the same as it was for the Jacobi
matrix in question.

\smallskip\noindent1) In case of Example~\ref{example:1_lindag}
one gets \[
\sum_{k=1}^{\infty}J_{\nu+k}(x)^{2}=\frac{x}{2}\left(J_{\nu}(x)\,\frac{\partial}{\partial\nu}J_{\nu+1}(x)-J_{\nu+1}(x)\,\frac{\partial}{\partial\nu}J_{\nu}(x)\right)\!,\]
where $x>0$ and $\nu\in\mathbb{C}$. This is in fact a particular
case of (\ref{eq:sum_Bessel_mn}).

\smallskip\noindent2) In case of Example~\ref{example:2_1over_n}
one gets \[
\sum_{k=1}^{\infty}k\, J_{-\alpha z+k}(z)^{2}=\frac{z^{2}}{2}\left(J_{-\alpha z}(z)\,\frac{\mbox{d}}{\mbox{d}z}J_{-\alpha z+1}(z)-J_{-\alpha z+1}(z)\,\frac{\mbox{d}}{\mbox{d}z}J_{-\alpha z}(z)\right)\!,\]
where $\alpha>0$ and $z\in\mathbb{C}$.

\smallskip\noindent3) In case of Example~\ref{example:4_1_over_nsq_diag0}
one gets \[
\sum_{k=1}^{\infty}\,(\alpha+k)J_{\alpha+k}(z)^{2}=\frac{z^{2}}{2}\left(J_{\alpha}(z)\,\frac{\mbox{d}}{\mbox{d}z}J_{\alpha+1}(z)-J_{\alpha+1}(z)\,\frac{\mbox{d}}{\mbox{d}z}J_{\alpha}(z)\right)\!,\]
where $\alpha>-1$ and $z\in\mathbb{C}$.

\smallskip\noindent4) In case of Example~\ref{example:3_power_q}
one gets\begin{eqnarray*}
 &  & \hskip-3em\sum_{k=1}^{\infty}q^{(k-1)(k-2)/2}\,(\beta z)^{2k-2}\left((q^{k}z;q)_{\infty}\,\,{}_{0}\phi_{1}(;q^{k}z;q,-q^{k}\beta^{2}z^{2})\right)^{2}\\
 &  & \hskip-3em=(qz;q)_{\infty}^{\,\,\,2}\,\bigg(\,{}_{0}\phi_{1}(;z;q,-\beta^{2}z^{2})\,{}_{0}\phi_{1}(;qz;q,-q\beta^{2}z^{2})\\
 &  & \qquad\ +\, z\,(z-1)\Big({}_{0}\phi_{1}(;qz;q,-q\beta^{2}z^{2})\,\frac{\text{d}}{\mbox{d}z}\,{}_{0}\phi_{1}(;z;q,-\beta^{2}z^{2})\\
 &  & \qquad\qquad\qquad\qquad-\,{}_{0}\phi_{1}(;z;q,-\beta^{2}z^{2})\,\frac{\text{d}}{\mbox{d}z}\,{}_{0}\phi_{1}(;qz;q,-q\beta^{2}z^{2})\Big)\bigg),\end{eqnarray*}
where $0<q<1$, $\beta>0$ and $z\in\mathbb{C}$.

\smallskip\noindent5) In case of Example~\ref{example:5_power_1_diag0}
 one gets\begin{eqnarray*}
 &  & \sum_{k=1}^{\infty}q^{(k-1)(k-2)/2}\, z^{k-1}\,{}_{0}\phi_{1}(;0;q,-q^{k}z)^{2}\,=\,\,{}_{0}\phi_{1}(;0;q,-z)\,{}_{0}\phi_{1}(;0;q,-qz)\\
 &  & \qquad+\,2z\bigg({}_{0}\phi_{1}(;0;q,-z)\,\frac{\mbox{d}}{\mbox{d}z}\,{}_{0}\phi_{1}(;0;q,-qz)-\,{}_{0}\phi_{1}(;0;q,-qz)\,\frac{\mbox{d}}{\mbox{d}z}\,{}_{0}\phi_{1}(;0;q,-z)\!\bigg),\end{eqnarray*}
where $0<q<1$ and $z\in\mathbb{C}$.

\subsection{A Jacobi matrix with a linear diagonal and constant parallels \label{subsec:example_lin_diag}}

Here we discuss in somewhat more detail Example~\ref{example:1_lindag}
concerned with a Jacobi matrix having a linear diagonal and constant
parallels. For simplicity and with no loss of generality we put $\alpha=1$.
Our goal is to study how the spectrum of the Jacobi operator $J$
depends on the real parameter $w$. We treat $J$ as a linear operator-valued
function, $J=J(w)$. One may write $J(w)=L+wT$ where $L$ is the
diagonal operator with the diagonal sequence $\lambda_{n}=n$, $\forall n\in\mathbb{N}$,
and $T$ has all units on the parallels neighboring to the diagonal
and all zeros elsewhere. Notice that $\|T\|\leq2$.

We know that $J(w)$ has, for all $w\in\mathbb{R}$, a semibounded
simple discrete spectrum. Let us enumerate the eigenvalues in ascending
order as $\lambda_{s}(w)$, $s\in\mathbb{N}$. From the standard perturbation
theory one infers that all functions $\lambda_{s}(w)$ are real analytic,
with $\lambda_{s}(0)=s$. Moreover, the functions $\lambda_{s}(w)$
are also known to be even and so we restrict $w$ to the positive
real half-axis. In Example~\ref{example:1_lindag} we learned that
for every $w>0$ fixed, the roots of the equation $J_{-z}(2w)=0$
are exactly $\lambda_{s}(w)$, $s\in\mathbb{N}$. Several first eigenvalues
$\lambda_{s}(w)$ as functions of $w$ are depicted in Figure~1.

The problem of roots of a Bessel function depending on the order,
with the argument being fixed, has a long history. Here we make use
of some results derived in the classical paper \cite{Coulomb}. Some
numerical aspects of the problem are discussed in \cite{Ikebe_atal}.
For comparatively recent results in this domain one may consult \cite{Petropoulou_etal}
and references therein.

In \cite{Coulomb} it is shown that\[
\frac{d\lambda_{s}(w)}{dw}=-\left(2w\int_{0}^{\infty}K_{0}(4w\sinh(t))\exp\left(2\lambda_{s}(w)t\right)dt\right)^{\!-1}.\]
From this relation one immediately deduces a few basic qualitative
properties of the spectrum of the Jacobi operator.

\begin{proposition}[M.~J.~Coulomb] The spectrum $\{\lambda_{s}(w);\: s\in\mathbb{N}\}$
of the above introduced Jacobi operator, depending on the parameter
$w\geq0$, has the following properties.

(i) For every $s\in\mathbb{N}$, the function $\lambda_{s}(w)$ is
strictly decreasing.

(ii) If $r<s$ then $\lambda_{r}{}'(w)<\lambda_{s}{}'(w)$.

(iii) In particular, the distance between two neighboring eigenvalues
$\lambda_{s+1}(w)-\lambda_{s}(w)$, $s\in\mathbb{N}$, increases with
increasing $w$ and is always greater than or equal to $1$, with
the equality only for $w=0$. \end{proposition}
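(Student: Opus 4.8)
The plan is to derive everything from the Coulomb formula
\[
\frac{d\lambda_{s}(w)}{dw}=-\left(2w\int_{0}^{\infty}K_{0}(4w\sinh(t))\exp\left(2\lambda_{s}(w)t\right)dt\right)^{\!-1},
\]
treating the three claims in turn. For item (i) I would first observe that for $w>0$ the integrand is strictly positive: the modified Bessel function $K_{0}(x)$ is positive for $x>0$, the argument $4w\sinh(t)$ is positive for $t>0$, and the exponential factor is positive. Hence the integral is a positive (possibly $+\infty$) quantity, and since $w>0$ the bracket is positive, making $\lambda_{s}{}'(w)<0$. The only subtlety is to confirm the integral converges so that $\lambda_{s}{}'(w)$ is a genuine negative real number rather than $0$; this follows from the exponential decay $K_{0}(x)\sim\sqrt{\pi/(2x)}\,e^{-x}$ as $x\to\infty$, which beats the factor $e^{2\lambda_{s}t}$ once $4w\sinh(t)$ grows like $2we^{t}$, i.e.\ for $t$ large the integrand decays like $\exp((2\lambda_{s}-2w e^{t}/\cdots)t)$ — more precisely the $e^{-4w\sinh t}$ dominates any fixed exponential $e^{2\lambda_{s}t}$. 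At $w=0$ the bracket degenerates, consistent with $\lambda_{s}{}'(0)=0$ since the $\lambda_{s}(w)$ are even.

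For item (ii), the comparison $\lambda_{r}{}'(w)<\lambda_{s}{}'(w)$ for $r<s$, I would exploit that the eigenvalues are ordered, $\lambda_{r}(w)<\lambda_{s}(w)$, and that the map from the eigenvalue to its derivative is monotone through the formula. Writing $I(\mu):=\int_{0}^{\infty}K_{0}(4w\sinh t)e^{2\mu t}\,dt$, the derivative is $\lambda_{s}{}'(w)=-1/(2w\,I(\lambda_{s}(w)))$. Since $K_{0}(4w\sinh t)\ge0$ and $e^{2\mu t}$ is strictly increasing in $\mu$ for each $t>0$, the function $\mu\mapsto I(\mu)$ is strictly increasing. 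Therefore $r<s$ gives $\lambda_{r}(w)<\lambda_{s}(w)$, hence $I(\lambda_{r}(w))<I(\lambda_{s}(w))$, both positive, so $1/I(\lambda_{r})>1/I(\lambda_{s})$ and after multiplying by the common positive factor $-1/(2w)$ the inequality flips to $\lambda_{r}{}'(w)<\lambda_{s}{}'(w)$, as desired.

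Item (iii) combines the previous two. For fixed $s$, set $g(w):=\lambda_{s+1}(w)-\lambda_{s}(w)$. By item (ii) applied with $r=s$ one has $g{}'(w)=\lambda_{s+1}{}'(w)-\lambda_{s}{}'(w)>0$ for $w>0$, so $g$ is strictly increasing on $(0,\infty)$. At $w=0$ we know $\lambda_{s}(0)=s$, so $g(0)=1$. Since $g$ is continuous (indeed real-analytic by perturbation theory, as recalled in the text) on $[0,\infty)$ and strictly increasing on $(0,\infty)$ with $g(0)=1$, it follows that $g(w)\ge1$ for all $w\ge0$ with equality only at $w=0$. This establishes that neighboring eigenvalue gaps increase with $w$ and stay $\ge1$.

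The main obstacle I anticipate is purely the convergence and positivity bookkeeping for the integral $I(\mu)$: one must be sure that $I(\lambda_{s}(w))$ is finite and strictly positive so that the reciprocal is well defined and the sign and monotonicity arguments are rigorous rather than formal. Once the positivity of $K_{0}$ on $(0,\infty)$ and the exponential-decay domination of the integrand are pinned down, the three qualitative conclusions follow by elementary sign-chasing and the single differentiation $g{}'=\lambda_{s+1}{}'-\lambda_{s}{}'$; no further analytic input about Bessel functions is needed beyond the Coulomb formula itself and the initial data $\lambda_{s}(0)=s$.
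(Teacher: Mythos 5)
Your proposal is correct and follows exactly the route the paper intends: the paper states the proposition as an immediate consequence of Coulomb's derivative formula with no written proof, and your sign-chasing --- positivity of $K_{0}$ on $(0,\infty)$ forcing $\lambda_{s}{}'(w)<0$ for $w>0$, strict monotonicity of $\mu\mapsto I(\mu)=\int_{0}^{\infty}K_{0}(4w\sinh t)\,e^{2\mu t}\,dt$ giving (ii), and integrating $g'=\lambda_{s+1}{}'-\lambda_{s}{}'>0$ from $g(0)=1$ for (iii) --- is precisely the deduction left implicit there. The only detail worth adding is that finiteness of $I(\mu)$ also requires integrability at $t=0$, where $K_{0}(4w\sinh t)$ has merely a logarithmic singularity, in addition to the doubly exponential decay $K_{0}(4w\sinh t)\sim C\,e^{-2we^{t}}$ at infinity that dominates any fixed factor $e^{2\mu t}$.
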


Let us next check the asymptotic behavior of $\lambda_{s}(w)$ at
infinity. The asymptotic expansion at infinity of the $s$th root
$j_{s}(\nu)$ of the equation $J_{\nu}(x)=0$, with $\nu$ being fixed,
reads \cite[Eq.~9.5.22]{AbramowitzStegun} \[
j_{s}(\nu)=\nu-2^{-1/3}a_{s}\nu^{1/3}+O\!\left(\nu^{-1/3}\right)\ \text{as}\ \nu\to+\infty,\]
where $a_{s}$ is the $s$th negative zero of the Airy function $\text{Ai}(x)$.
From here one deduces that \[
\lambda_{s}(w)=-2w-a_{s}w^{1/3}+O\left(w^{-1/3}\right)\text{ }\text{as}\text{ }w\to+\infty.\]

Concerning the asymptotic behavior of $\lambda_{s}(w)$ at $w=0$,
one may use the expression for the Bessel function as a power series
and apply the substitution, $\lambda_{s}(w)=s-z(w)$, $s=1,2,3,\ldots$.
The solution $z=z(w)$, with $z(0)=0$, is then defined implicitly
near $w=0$ by the equation\[
\sum_{m=0}^{\infty}\frac{(-1)^{m}}{m!\,\Gamma(m+1-s+z(w))}\, w^{2m}=0.\]
The computation is straightforward and based on the relation\[
\frac{1}{\Gamma(-m+z)}=(-1)^{m}m!\left(z-\psi^{(0)}(m+1)z^{2}\right)+O\!\left(z^{3}\right),\ m=0,1,2,3,\ldots,\]
where $\psi^{(0)}$ is the polygamma function. This way one derives
that, as $w\to0$,\begin{eqnarray}
\lambda_{1}(w) & = & 1-w^{2}+\frac{1}{2}w^{4}+O\!\left(w^{6}\right),\label{eq:asympt_lbd_s}\\
\lambda_{s}(w) & = & s-\frac{1}{(s-1)!s!}\, w^{2s}+\frac{2s}{(s-1)(s-1)!(s+1)!}\, w^{2s+2}+O\!\left(w^{2s+4}\right)\!,\text{ }\ \text{for}\ s\geq2.\nonumber \end{eqnarray}

The same asymptotic formulas, as given in (\ref{eq:asympt_lbd_s}),
can also be derived using the standard perturbation theory \cite[\S~II.2]{Kato}.
Alternatively, one may use equivalent formulas for coefficients of
the perturbation series derived in \cite{DSV1,DSV2} which are perhaps
more convenient for this particular example.

The distance of $s\in\mathbb{N}$ to the rest of the spectrum of of
the diagonal operator $L$ equals $1$. The Kato-Rellich theorem tells
us that there exists exactly one eigenvalue of $J(w)$ in the disk
centered at $s$ and with radius $1/2$ as long as $|w|<1/4$. The
explicit expression for the leading term in (\ref{eq:asympt_lbd_s})
suggests, however, that the eigenvalue $\lambda_{s}(w)$ may stay
close to $s$ on a much larger interval at least for high orders $s$.
It turns out that actually $\lambda_{s}(w)$ is well approximated
by this leading asymptotic term on an interval $[0,\beta_{s})$, with
$\beta_{s}\sim s/e$ for $s\gg1$. A precise formulation is given
in Proposition~\ref{prop:lbd_s_asympt_0} below.

Denote by $y_{k}(\nu)$ the $k$th root of the Bessel function $Y_{\nu}(z)$,
$k\in\mathbb{N}$. Let us put\[
\beta_{s}:=\left(\frac{(s-1)!\, s!}{\pi}\right)^{\!1/(2s)}\!,\ \ s\in\mathbb{N}.\]
In order to avoid confusion with the usual notation for Bessel functions,
the $n$th truncation of $J(w)$ is now denoted by a bold letter as
$\pmb{J}_{n}(w)$.

\begin{lemma} \label{lem:beta_s_leq_y1} The following estimate holds
true:

\begin{equation}
\beta_{s}<\frac{1}{2}\, y_{1}\!\!\left(s-\frac{1}{2}\right)\!,\ \forall s\in\mathbb{N}.\label{eq:beta_s_leq_y1}\end{equation}
\end{lemma}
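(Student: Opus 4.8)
The plan is to read $\beta_s<\tfrac12 y_1(s-\tfrac12)$ as the assertion that $Y_{s-1/2}$ has no zero on $(0,2\beta_s]$, and to prove this by splitting into a ``large $s$'' regime treated asymptotically and finitely many small values of $s$ treated explicitly. First I would record that, since $\nu=s-\tfrac12$ is a half-integer, $Y_{s-1/2}(x)=(-1)^sJ_{1/2-s}(x)$; hence $y_1(s-\tfrac12)$ is the smallest positive zero of the elementary (spherical-Bessel) function $J_{1/2-s}$, and for small $s$ it is the least root of an explicit trigonometric equation. I would also note, via Stirling, that $2\beta_s=\tfrac{2s}{e}\bigl(1+o(1)\bigr)\approx 0.736\,s$, leaving ample room below $\nu=s-\tfrac12$ once $s$ is large.

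For the large-$s$ regime I would invoke the classical lower bound $y_1(\nu)>\nu$ for $\nu>0$ (see, e.g., \cite{AbramowitzStegun}). With this in hand it suffices to prove the purely elementary inequality $2\beta_s\le s-\tfrac12$. Writing $\log(2\beta_s)=\log 2+\tfrac1{2s}\log\bigl((s-1)!\,s!/\pi\bigr)$ and inserting the two-sided bounds $\sqrt{2\pi}\,n^{n+1/2}e^{-n}\le n!\le \sqrt{2\pi}\,n^{n+1/2}e^{-n}e^{1/(12n)}$ yields $\log\beta_s=\log s-1+\tfrac{\log 2}{2s}+O(s^{-2})$, and a short estimate of the remainder shows $2\beta_s<s-\tfrac12$ for every $s\ge 4$. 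Combined with $y_1(\nu)>\nu$ this settles all $s\ge 4$.

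It remains to treat $s\in\{1,2,3\}$, where $2\beta_s$ actually exceeds $s-\tfrac12$ so that $y_1(\nu)>\nu$ is useless; here I would compute directly from the elementary forms. One has $Y_{1/2}(x)=-\sqrt{2/(\pi x)}\cos x$, so $y_1(\tfrac12)=\pi/2$, while $2\beta_1=2/\sqrt\pi<\pi/2$. Likewise $Y_{3/2}(x)\propto x^{-1}\cos x+\sin x$ and $Y_{5/2}(x)\propto (3x^{-2}-1)\cos x+3x^{-1}\sin x$ (overall signs being immaterial for the zeros), whose first positive zeros are the least roots of $\tan x=-1/x$ and of $(3-x^2)\cos x+3x\sin x=0$; these roots (near $2.80$ and $3.96$) comfortably exceed $2\beta_2=2(2/\pi)^{1/4}\approx 1.79$ and $2\beta_3=2(12/\pi)^{1/6}\approx 2.50$. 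This exhausts all $s$.

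The main obstacle is the quantitative matching in the large-$s$ regime: the clean input $y_1(\nu)>\nu$ is only barely strong enough, the case $s=4$ being essentially binding once one commits to that inequality, so the Stirling step must be carried out with genuine (not merely asymptotic) control of the error to guarantee $2\beta_s<s-\tfrac12$ from $s=4$ onward. The small cases $s\le 3$, where the inequality already fails at the level of $\nu$ and can be saved only by the true size of the first zero, are the reason the case split is unavoidable and are where I would be most careful with the explicit trigonometric roots.
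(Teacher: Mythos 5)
Your proposal is correct and follows essentially the same route as the paper: both split at $s=4$, invoke the classical bound $y_1(\nu)>\nu$ together with the elementary inequality $2\beta_s< s-\tfrac12$ for $s\ge4$, and treat $s=1,2,3$ directly. The only differences are cosmetic --- the paper verifies $2\beta_s<s-\tfrac12$ by showing the sequence $\phi_s=\pi^{-1}((s-1)!\,s!)^{-1}(s-\tfrac12)^{2s}2^{-2s}$ is increasing with $\phi_4>1$ rather than via explicit Stirling bounds, and it checks the small cases numerically where you use the explicit trigonometric forms of the half-integer $Y_\nu$'s.
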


\begin{proof} One knows that $\nu<y_{1}(\nu)$, $\forall\nu\geq0$
\cite[Eq.~9.5.2]{AbramowitzStegun}, and in particular this is true
for $\nu=s-1/2$, $s\in\mathbb{N}$. On the other hand, the sequence\[
\phi_{s}=\frac{\pi}{(s-1)!\, s!}\left(s-\frac{1}{2}\right)^{\!2s}2^{-2s}\]
is readily verified to be increasing, and $1<\phi_{4}$. This shows
(\ref{eq:beta_s_leq_y1}) for all $s\geq4$. The cases $s=1,2,3$
may be checked numerically. \end{proof}

\begin{lemma} \label{lem:eq_lbd_s} Denote by $\chi_{n}(w;z)$ the
characteristic polynomial of the $n$th truncation $\pmb{J}_{n}(w)$
of the Jacobi matrix $J(w)$. If $0\leq w\leq\beta_{s}$ for some
$s\in\mathbb{N}$ then $z=\lambda_{s}(w)$ solves the equation\begin{equation}
\chi_{2s-1}(w;z)-\frac{wJ_{2s-z}(2w)}{J_{2s-1-z}(2w)}\,\chi_{2s-2}(w;z)=0.\label{eq:eigenval_chi_linear}\end{equation}
 \end{lemma}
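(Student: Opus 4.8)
The plan is to convert the single scalar condition $F_{\mathcal J}(\lambda_s(w))=0$ into the stated finite identity by splitting, at the index $2s-1$, the infinite sequence whose $\mathfrak F$-value is $F_{\mathcal J}$. Since here $\lambda_n=n$ and $w_n=w$, one has $\gamma_n\gamma_{n+1}=w$, and because $\mathfrak F$ depends only on consecutive products the specialization of (\ref{eq:char_pol_general}) reads
\begin{equation*}
\chi_m(w;z)=\det(\pmb J_m(w)-zI_m)=\left(\prod_{k=1}^{m}(k-z)\right)\mathfrak F\!\left(\left\{\frac{w}{k-z}\right\}_{k=1}^{m}\right),\qquad m\in\mathbb N .
\end{equation*}
As $\der(\lambda)=\emptyset$ and $\mathcal J$ is real, Corollary~\ref{cor:specJ_proper_Znull_real} together with Example~\ref{example:1_lindag} shows that $z=\lambda_s(w)\in(s-1,s)\subset\mathbb C_0^\lambda$ is a zero of $F_{\mathcal J}$, equivalently $J_{-\lambda_s(w)}(2w)=0$. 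First I would apply the recurrence (\ref{eq:F_T_recur_k}) to $x_k=w/(k-z)$ with $k=2s-1$; evaluated at $z=\lambda_s(w)$ this gives
\begin{equation*}
0=\mathfrak F\!\left(\{x_k\}_{k=1}^{2s-1}\right)\mathfrak F\!\left(\{x_k\}_{k=2s}^{\infty}\right)-\frac{w^{2}}{(2s-1-z)(2s-z)}\,\mathfrak F\!\left(\{x_k\}_{k=1}^{2s-2}\right)\mathfrak F\!\left(\{x_k\}_{k=2s+1}^{\infty}\right).
\end{equation*}

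Next I would insert closed forms for the four factors. The two finite factors equal $\chi_{2s-1}$ and $\chi_{2s-2}$ divided by the products $\prod(k-z)$ displayed above, while the two infinite tails are evaluated by the Bessel formula of Example~\ref{example:1_lindag} with $r=2s-1$ and $r=2s$, yielding $w^{z-2s+1}\Gamma(2s-z)\,J_{2s-1-z}(2w)$ and $w^{z-2s}\Gamma(2s+1-z)\,J_{2s-z}(2w)$, respectively. Telescoping $\prod_{k=1}^{2s-1}(k-z)=(2s-1-z)\prod_{k=1}^{2s-2}(k-z)$ and using $\Gamma(2s+1-z)=(2s-z)\Gamma(2s-z)$, all powers of $w$ and all $\Gamma$-factors cancel, leaving the symmetric cross-multiplied identity
\begin{equation*}
\chi_{2s-1}(w;z)\,J_{2s-1-z}(2w)=w\,\chi_{2s-2}(w;z)\,J_{2s-z}(2w),\qquad z=\lambda_s(w),
\end{equation*}
from which (\ref{eq:eigenval_chi_linear}) follows upon dividing by $J_{2s-1-z}(2w)$. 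This part is purely mechanical once the two tail identities and (\ref{eq:char_pol_general}) are in hand.

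The step demanding real work, and the only place where the hypothesis $0\le w\le\beta_s$ is used, is the justification that the division is legitimate, i.e. that $J_{2s-1-\lambda_s(w)}(2w)\neq0$. Here I would reduce to a comparison of $2w$ with the first positive zero $j_1(\mu)$ of $J_\mu$, where $\mu:=2s-1-\lambda_s(w)$: since $\lambda_s(w)<s$ for $w>0$ one has $\mu>s-1\ge0$, so $J_\mu$ is strictly positive on $(0,j_1(\mu))$ and it suffices to show $2w<j_1(\mu)$. Monotonicity of $j_1(\cdot)$ in the order together with the interlacing bound $y_1(\nu)<j_1(\nu)$, combined with Lemma~\ref{lem:beta_s_leq_y1} (which asserts exactly $2\beta_s<y_1(s-\tfrac12)$), is meant to chain into $2w\le 2\beta_s<j_1(\mu)$; the constant $\beta_s$ is tailored precisely so that this estimate closes. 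I expect this comparison to be the main obstacle rather than the algebra: the order $\mu$ and the argument $2w$ both move as $w$ varies, and near the right endpoint $w=\beta_s$ the order $\mu$ dips slightly below $s-\tfrac12$, so the chaining must be arranged through the lower bound $\mu>s-1$ and standard properties of Bessel zeros rather than by a single monotone substitution. Finally, the degenerate value $w=0$, where $z=\lambda_s(0)=s$ is an integer and both $\chi_{2s-1}$ and $\chi_{2s-2}$ vanish, is handled trivially via the cross-multiplied form (or as a limiting case), since the fractional form (\ref{eq:eigenval_chi_linear}) then produces a removable $0/0$.
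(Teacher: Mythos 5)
Your algebraic core is correct and takes a genuinely different route from the paper. You obtain (\ref{eq:eigenval_chi_linear}) by splitting the vanishing characteristic function $F_{\mathcal{J}}(\lambda_{s}(w))=0$ at the index $2s-1$ via the rule (\ref{eq:F_T_recur_k}) and inserting the Bessel closed forms; I checked the bookkeeping, and the powers of $w$ and the $\Gamma$-factors do cancel as you claim (your convention $\chi_{m}(w;z)=\det(\pmb{J}_{m}(w)-zI_{m})$ is also the one consistent with the minus sign in (\ref{eq:eigenval_chi_linear})), giving the cross-multiplied identity $\chi_{2s-1}(w;z)J_{2s-1-z}(2w)=w\,\chi_{2s-2}(w;z)J_{2s-z}(2w)$. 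The paper never invokes $F_{\mathcal{J}}=0$ at all: it splits $\ell^{2}(\mathbb{N})$ at the same index, writes $J(w)$ in $2\times2$ block form with $D(w)=J(w)+(2s-1)I$, inverts $D(w)-z$ by a min--max estimate, and reads off the single nonzero entry of $B(D-z)^{-1}C$ from the Weyl m-function (\ref{eq:Weyl_mfce}); equation (\ref{eq:eigenval_chi_linear}) is then the determinant of the Schur complement. What the paper's route buys is exactly the point you flag as the hard one: since the zeros of $z\mapsto J_{2s-1-z}(2w)$ are precisely $\spec(D(w))$ (Example~\ref{example:1_lindag} applied to the shifted matrix), nonvanishing of your denominator is equivalent to $\lambda_{s}(w)\notin\spec(D(w))$, and this follows at once from $\min\spec(D(w))\geq2s-2w>s\geq\lambda_{s}(w)$, which requires only the elementary inequality $\beta_{s}<s/2$ for $s\geq2$ (the case $s=1$ being immediate). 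No Bessel-zero estimates are needed.

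By contrast, your proposed chain for the nonvanishing does not close as written, as you half-concede. Lemma~\ref{lem:beta_s_leq_y1} together with the interlacing $y_{\nu,1}<j_{\nu,1}$ (first positive zeros of $Y_{\nu}$ and $J_{\nu}$) yields only $2w\leq2\beta_{s}<j_{s-1/2,1}$, while the order $\mu=2s-1-\lambda_{s}(w)$ ranges over $(s-1,\,s-1/2\,]$ (since $s-1/2\leq\lambda_{s}(w)\leq s$ on the relevant range), so monotonicity of $j_{\nu,1}$ in $\nu$ reduces the problem to $2\beta_{s}<j_{s-1,1}$ --- a genuinely new inequality that follows from nothing you cite. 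It is true, and provable in the style of Lemma~\ref{lem:beta_s_leq_y1} (one can show $2\beta_{s}<s-1\leq j_{s-1,1}$ for all $s\geq5$ by a monotone-sequence argument, checking $s=1,\ldots,4$ numerically against $j_{0,1}\approx2.40$, $j_{1,1}\approx3.83$, $j_{2,1}\approx5.14$, $j_{3,1}\approx6.38$), but it must be stated and proved as a separate lemma --- or simply replaced by the min--max bound above. A second, smaller repair: your assertion $\lambda_{s}(w)\in(s-1,s)$, needed so that $z\in\mathbb{C}_{0}^{\lambda}$ and the splitting (\ref{eq:F_T_recur_k}) is legitimate, is not a consequence of Corollary~\ref{cor:specJ_proper_Znull_real} and Example~\ref{example:1_lindag} alone; it follows from Coulomb's monotonicity combined with $J_{-s+1/2}(x)=(-1)^{s}Y_{s-1/2}(x)$ and Lemma~\ref{lem:beta_s_leq_y1}, which is exactly the observation made in the proof of Proposition~\ref{prop:lbd_s_asympt_0}. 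With these two repairs your argument is complete; your treatment of the degenerate value $w=0$ via the cross-multiplied form is fine.
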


\begin{proof} Let $\left\{ e_{k};\, k\in\mathbb{N}\right\} $ be
the standard basis in $\ell^{2}(\mathbb{N})$. Let us split the Hilbert
space into the orthogonal sum\[
\ell^{2}(\mathbb{N})=\text{span}\left\{ e_{k};\,1\leq k\leq2s-1\right\} \oplus\overline{\text{span}\left\{ e_{k};\,2s\leq k\right\} }.\]
Then $J(w)$ splits correspondingly into four matrix blocks,\[
J(w)=\left(\begin{array}{cc}
A(w) & B(w)\\
C(w) & D(w)\end{array}\right)\!.\]
Here $A(w)=\pmb{J}_{2s-1}(w)$, $D(w)=J(w)+(2s-1)I$, the block $B(w)$
has just one nonzero element in the lower left corner and $C(w)$
is transposed to $B(w)$.

By the min max principle, the minimal eigenvalue of $D(w)$ is greater
than or equal to $2s-2w$. Since $\lambda_{s}(w)\leq s$ one can estimate\[
\min\text{spec}(D(w))-\lambda_{s}(w)=\lambda_{1}(w)-\lambda_{s}(w)+2s-1\geq s-2w.\]
We claim that $0\leq w\leq\beta_{s}$ implies $\min\text{spec}(D(w))-\lambda_{s}(w)>0$.
This is obvious for $s=1$. For $s\geq2$, it suffices to show that
$\beta_{s}<s/2$. This can be readily done by induction in $s$. Hence,
under this assumption, $D(w)-z$ is invertible for $z=\lambda_{s}(w)$.

Solving the eigenvalue equation $J(w)\pmb{v}=z\pmb{v}$ one can write
the eigenvector as a sum $\pmb{v}=\pmb{x}+\pmb{y}$, in accordance
with the above orthogonal decomposition. If $D(w)-z$ is invertible
then the eigenvalue equation reduces to the finite-dimensional linear
system\begin{equation}
\left(A-z-B(D-z)^{-1}C\right)\pmb{x}=0.\label{eq:eigenval_red_linear}\end{equation}

One observes that $B(D-z)^{-1}C$ has all entries equal to zero except
of the element in the lower right corner. Using (\ref{eq:Weyl_mfce})
and (\ref{eq:BesselJ_rel_F}) one finds that this nonzero entry equals\[
wJ_{2s-z}(2w)J_{2s-1-z}(2w)^{-1}.\]
Equation (\ref{eq:eigenval_chi_linear}) then immediately follows
from (\ref{eq:eigenval_red_linear}). \end{proof}

\begin{proposition} \label{prop:lbd_s_asympt_0} For $s\in\mathbb{N}$
and $0\leq w\leq\beta_{s}$, one has

\[
0\leq s-\lambda_{s}(w)\leq\frac{1}{\pi}\,\arcsin\!\left(\frac{\pi w^{2s}}{(s-1)!s!}\right)\!.\]
\end{proposition}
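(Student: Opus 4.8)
The plan is to treat the two inequalities separately. The lower bound $0\le s-\lambda_{s}(w)$ is free: by the monotonicity established in Coulomb's proposition above each $\lambda_{s}(w)$ decreases on $[0,\infty)$ with $\lambda_{s}(0)=s$, so $\lambda_{s}(w)\le s$. For the upper bound write $u_{0}=u_{0}(w):=\frac1\pi\arcsin\!\big(\frac{\pi w^{2s}}{(s-1)!\,s!}\big)$; since $w\le\beta_{s}$ forces $\frac{\pi w^{2s}}{(s-1)!s!}\le1$, we have $u_{0}\in[0,\tfrac12]$ and the assertion is exactly $\lambda_{s}(w)\ge s-u_{0}$. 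From Example~\ref{example:1_lindag} (with $\alpha=1$) the eigenvalues of $J(w)$ are precisely the zeros of $z\mapsto J_{-z}(2w)$, so everything reduces to locating this zero near $z=s$.

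First I would record an exact rewriting of the relevant Bessel value. Applying the reflection formula $1/\Gamma(u-k)=(-1)^{k}\pi^{-1}\sin(\pi u)\,\Gamma(1+k-u)$ to the terms $0\le m\le s-1$ of the power series of $J_{u-s}(2w)$ and reindexing the tail $m\ge s$, one obtains, for every $u\in[0,\tfrac12]$,
\[
J_{u-s}(2w)=(-1)^{s-1}w^{u-s}\,g(u),\qquad g(u):=\frac{\sin(\pi u)}{\pi}\,B(w,u)-w^{2s}A(w,u),
\]
with $B(w,u)=\sum_{m=0}^{s-1}\frac{w^{2m}}{m!}\Gamma(s-m-u)>0$ and $A(w,u)=\sum_{l=0}^{\infty}\frac{(-1)^{l}w^{2l}}{(s+l)!\,\Gamma(u+l+1)}$. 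At $u=0$ this reads $J_{-s}(2w)=(-1)^{s}J_{s}(2w)$, and since $A(w,0)=w^{-s}J_{s}(2w)$ one gets $g(0)=-w^{s}J_{s}(2w)<0$, the positivity $J_{s}(2w)>0$ holding because $2w\le2\beta_{s}<y_{1}(s-\tfrac12)<j_{1}(s-\tfrac12)\le j_{1}(s)$, using Lemma~\ref{lem:beta_s_leq_y1}, the interlacing $y_{1}(\nu)<j_{1}(\nu)$, and monotonicity of the first zero $j_{1}$ in the order.

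Next comes the localization. By part~(iii) of Coulomb's proposition neighboring eigenvalues are at distance at least $1$, so the interval $(s-\tfrac12,s]$ of length $\tfrac12$ carries at most one eigenvalue; equivalently $g$ has at most one zero on $[0,\tfrac12]$. Moreover $g(\tfrac12)>0$: indeed $J_{1/2-s}(2w)=(-1)^{s}Y_{s-1/2}(2w)$, and $Y_{s-1/2}(2w)<0$ because $2w\le2\beta_{s}<y_{1}(s-\tfrac12)$ — this is exactly the point where Lemma~\ref{lem:beta_s_leq_y1} is used. Hence $g(0)<0<g(\tfrac12)$, so $g$ has a single zero $u^{\ast}\in(0,\tfrac12)$ with $g<0$ on $[0,u^{\ast})$ and $g>0$ on $(u^{\ast},\tfrac12]$. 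Since $g(\tfrac12)\neq0$ for every $w\le\beta_{s}$, the (continuous, analytic) eigenvalue $\lambda_{s}(w)$, which starts at $s$, can never cross $s-\tfrac12$; thus it remains in $(s-\tfrac12,s]$ and coincides with the unique eigenvalue $s-u^{\ast}$ there. Consequently $s-\lambda_{s}(w)=u^{\ast}$, and $\lambda_{s}(w)\ge s-u_{0}$ is equivalent to $u^{\ast}\le u_{0}$, i.e. to $g(u_{0})\ge0$.

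The crux — and the main obstacle — is therefore the single inequality $g(u_{0})\ge0$, which upon inserting $\sin(\pi u_{0})/\pi=w^{2s}/((s-1)!\,s!)$ becomes the pivotal estimate
\[
B(w,u_{0})\ \ge\ (s-1)!\,s!\,A(w,u_{0}),\qquad 0<w\le\beta_{s}.
\]
At the endpoint $w=\beta_{s}$ it is automatic, since then $u_{0}=\tfrac12$ and $g(\tfrac12)>0$ was already obtained from the sign of $Y_{s-1/2}$. As $w\to0^{+}$ it follows from a leading-order expansion: with $u_{0}\sim w^{2s}/((s-1)!\,s!)$, the coefficient of $w^{2}$ in $B-(s-1)!\,s!\,A$ is strictly positive, whereas the (negative) $w^{0}$ contribution only enters at order $u_{0}\sim w^{2s}$, consistently with the asymptotics in~\eqref{eq:asympt_lbd_s}. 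The genuine work is to propagate this across the whole range $0<w\le\beta_{s}$, that is, to bound the alternating series $A(w,u_{0})$ against the positive finite sum $B(w,u_{0})$ uniformly; equivalently, to show that $J_{u_{0}(w)-s}(2w)$ retains the sign $(-1)^{s-1}$ throughout $(0,\beta_{s}]$, it being able to vanish only at those $w$ where the bound is attained with equality. This uniform estimate on the alternating tail is where the real difficulty lies.
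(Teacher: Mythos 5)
Your preparatory analysis is correct and, in its localization step, essentially coincides with the paper's: the reflection-formula decomposition $J_{u-s}(2w)=(-1)^{s-1}w^{u-s}g(u)$ with $g(u)=\pi^{-1}\sin(\pi u)\,B(w,u)-w^{2s}A(w,u)$ checks out, as do the signs $g(0)=-w^{s}J_{s}(2w)<0$ and $g(1/2)=-w^{s-1/2}Y_{s-1/2}(2w)>0$ (the paper likewise exploits $J_{-s+1/2}(x)=(-1)^{s}Y_{s-1/2}(x)$ together with Lemma~\ref{lem:beta_s_leq_y1} and the monotonicity of $\lambda_{s}(w)$ to confine the eigenvalue to $[s-\tfrac12,s]$). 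But your proposal stops exactly where the proof has to begin. The pivotal estimate $B(w,u_{0})\ge(s-1)!\,s!\,A(w,u_{0})$ is, since $g(u_{0})=\frac{w^{2s}}{(s-1)!\,s!}\left(B(w,u_{0})-(s-1)!\,s!\,A(w,u_{0})\right)$ and $u^{\ast}\le u_{0}\Leftrightarrow\sin(\pi u^{\ast})\le\sin(\pi u_{0})$ on $[0,\pi/2]$, \emph{identical} to the assertion $\sin(\pi(s-\lambda_{s}(w)))\le\pi w^{2s}/((s-1)!\,s!)$ being proved --- so the reduction carries no content beyond inverting the sine, and verifying it at the two endpoints $w=\beta_{s}$ and $w\to0^{+}$ proves nothing on the interior of the range. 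You concede this yourself (``the genuine work is to propagate this across the whole range''), so as submitted this is an admitted gap, not a proof; and no argument is offered that would control the alternating series $A(w,u_{0})$ against the finite sum $B(w,u_{0})$ uniformly in $w$.

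What the paper does instead is bound $\sin(\pi(s-\lambda))$ by a mechanism your framework never touches. It combines: (a) the Schur-complement identity of Lemma~\ref{lem:eq_lbd_s}, tying $\lambda_{s}(w)$ to the truncation $\pmb{J}_{2s-1}(w)$ with a Bessel-ratio correction; (b) the explicit formula for $\det\left(\pmb{J}_{2s-1}(w)-s-x\right)$ from \cite{StampachStovicek}, giving the lower bound (\ref{eq:estim_chi_2s-1}) on $|\chi_{2s-1}|$ and hence (\ref{eq:1st_estim_lbd_s}); (c) the identity (\ref{eq:F_eq_ration_Bessel}) converting the finite $\mathfrak{F}$-factor into $w^{k}J_{k+1-\lambda}(2w)/J_{1-\lambda}(2w)$, together with the infinite product for the sine; and, decisively, (d) the relation $\sin(\pi\lambda)=\pi w\,J_{\lambda}(2w)J_{1-\lambda}(2w)$, obtained from (\ref{eq:Jnu_Jnu_eq_sin}) by inserting $J_{-\lambda}(2w)=0$. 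Step (d) is the trick you are missing: it eliminates the uncontrollable denominator $J_{1-\lambda}(2w)$, so that after multiplying through one only needs $|J_{\lambda}(2w)J_{2s-\lambda}(2w)|\le w^{2s}/\left(\Gamma(\lambda+1)\Gamma(2s+1-\lambda)\right)$ from the elementary bound $|J_{\nu}(x)|\le|x/2|^{\nu}/\Gamma(\nu+1)$ for $\nu>-1/2$, after which a digamma monotonicity argument gives $\Gamma(\lambda+1)\Gamma(2s+1-\lambda)\ge(s!)^{2}$ and thus $\sin(\pi(s-\lambda))^{2}\le\pi^{2}w^{4s}/\left(((s-1)!)^{2}(s!)^{2}\right)$. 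If you wish to salvage your $g$-function framework, the way forward is to graft in exactly this Wronskian-at-the-zero identity rather than to attack $A$ versus $B$ termwise.
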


\begin{proof} We start from Lemma~\ref{lem:eq_lbd_s} and equation
(\ref{eq:eigenval_chi_linear}). Let us recall from \cite[Proposition~30]{StampachStovicek}
that\[
\det\left(\pmb{J}_{2s-1}(w)-s-x\right)=(-1)^{s}x\,\sum_{k=0}^{s-1}\binom{2s-k-1}{k}w^{2k}\prod_{j=1}^{s-k-1}\left(j^{2}-x^{2}\right)\!.\]
Hence if $z\in\mathbb{R}$, $|z-s|\leq1$, then\begin{equation}
\left|\chi_{2s-1}(w;z)\right|\geq|z-s|\,\prod_{j=1}^{s-1}\left(j^{2}-(z-s)^{2}\right)\!.\label{eq:estim_chi_2s-1}\end{equation}

Since $J_{-s+1/2}(x)=(-1)^{s}\, Y_{s-1/2}(x)$ it is true that for
$2w=y_{1}(s-1/2)$ one has $\lambda_{s}(w)=s-1/2$. Because of monotonicity
of $\lambda_{s}(w)$ one makes the following observation: if $2w\leq y_{1}(s-1/2)$
then $s\geq\lambda_{s}(w)\geq s-1/2$.

By Lemma~\ref{lem:beta_s_leq_y1}, if $w\leq\beta_{s}$ then $2w\leq y_{1}(s-1/2)$,
and so the estimate (\ref{eq:estim_chi_2s-1}) applies for $z=\lambda_{s}(w)$.
Using also Proposition~\ref{prop:charfce_finJ} to express $\chi_{2s-2}(w;z)$
one derives from (\ref{eq:eigenval_chi_linear}) that\begin{equation}
|\lambda-s|\leq w\left|\frac{J_{2s-\lambda}(2w)}{J_{2s-1-\lambda}(2w)}\right|\left|\frac{s-\lambda}{2s-1-\lambda}\,\mathfrak{F}\!\left(\frac{w}{1-\lambda},\frac{w}{2-\lambda},\ldots,\frac{w}{2s-2-\lambda}\right)\right|\label{eq:1st_estim_lbd_s}\end{equation}
where as well as in the remainder of the proof we write for short
$\lambda$ instead of $\lambda_{s}(w)$.

Starting from the equation

\[
\mathfrak{F}\!\left(\frac{w}{1-\lambda},\frac{w}{2-\lambda},\frac{w}{3-\lambda},\ldots\right)=0,\ \ \text{with}\ \lambda=\lambda_{s}(w),\]
and using (\ref{eq:F_T_recur_k}), (\ref{eq:BesselJ_rel_F}) one derives
that, for all $k\in\mathbb{Z}_{+}$,\begin{equation}
\left(\prod_{j=1}^{k}(j-\lambda)\right)\mathfrak{F}\!\left(\frac{w}{1-\lambda},\frac{w}{2-\lambda},\ldots,\frac{w}{k-\lambda}\right)=w^{k}\,\frac{J_{k+1-\lambda}(2w)}{J_{1-\lambda}(2w)}\,.\label{eq:F_eq_ration_Bessel}\end{equation}
Combining (\ref{eq:1st_estim_lbd_s}) and (\ref{eq:F_eq_ration_Bessel})
we get (knowing that $0\leq s-\lambda\leq1/2$ for $\lambda=\lambda_{s}(w)$)\[
s-\lambda\leq w^{2s-1}\left|\left(\prod_{j=1}^{s-1}(\lambda-j)\text{ }\prod_{j=1}^{s-1}(j+s-\lambda)\right)^{\!\!-1}\frac{J_{2s-\lambda}(2w)}{J_{1-\lambda}(2w)}\right|.\]
But notice that, by expressing the sine function as an infinite product,\[
\prod_{j=1}^{s-1}(\lambda-j)\text{ }\prod_{j=1}^{s-1}(j+s-\lambda)=((s-1)!)^{2}\,\frac{\sin(\pi(s-\lambda))}{\pi(s-\lambda)}\left(\prod_{j=s}^{\infty}\left(1-\frac{(s-\lambda)^{2}}{j^{2}}\right)\right)^{\!-1}\!.\]
Hence\[
\sin(\pi(s-\lambda))\leq\pi\,\frac{w^{2s-1}}{((s-1)!)^{2}}\left|\frac{J_{2s-\lambda}(2w)}{J_{1-\lambda}(2w)}\right|\!.\]

From (\ref{eq:Jnu_Jnu_eq_sin}) one gets, while taking into account
that $J_{-\lambda}(2w)=0$,\[
\sin(\pi\lambda)=\pi wJ_{\lambda}(2w)J_{1-\lambda}(2w).\]
In addition, one knows that\[
\left|J_{\nu}(x)\right|\leq\frac{1}{\Gamma(\nu+1)}\left|\frac{x}{2}\right|^{\nu}\]
provided $\nu>-1/2$ and $x\in\mathbb{R}$ \cite[Eq.~9.1.62]{AbramowitzStegun}.
Hence\[
\sin(\pi(s-\lambda))^{2}\leq\frac{\pi^{2}w^{4s}}{((s-1)!)^{2}\,\Gamma(2s+1-\lambda)\Gamma(\lambda+1)}\,.\]
Writing $\lambda=s-\zeta$, with $0\leq\zeta\leq1/2$, one has\[
\frac{d}{d\zeta}\log\!\left(\frac{1}{\Gamma(s+\zeta+1)\Gamma(s-\zeta+1)}\right)=-\psi^{(0)}(s+\zeta+1)+\psi^{(0)}(s-\zeta+1)<0.\]
Thus we arrive at the estimate\[
\sin(\pi(s-\lambda))^{2}\leq\frac{\pi^{2}w^{4s}}{((s-1)!)^{2}\,(s!)^{2}}\,.\]

To complete the proof it suffices to notice that the assumption $w\leq\beta_{s}$
means nothing but $\left.w^{2s}\right/((s-1)!s!)\leq1$, and it also
implies that $0\leq s-\lambda\leq1/2$. \end{proof}

\section*{Acknowledgments}

The authors wish to acknowledge gratefully partial support from the
following grants: Grant No.\ 201/09/0811 of the Czech Science Foundation
(P.\v{S}.) and Grant No.\ LC06002 of the Ministry of Education of
the Czech Republic (F.\v{S}.).

\clearpage{}\includegraphics[bb=0bp 0bp 450bp 261bp]{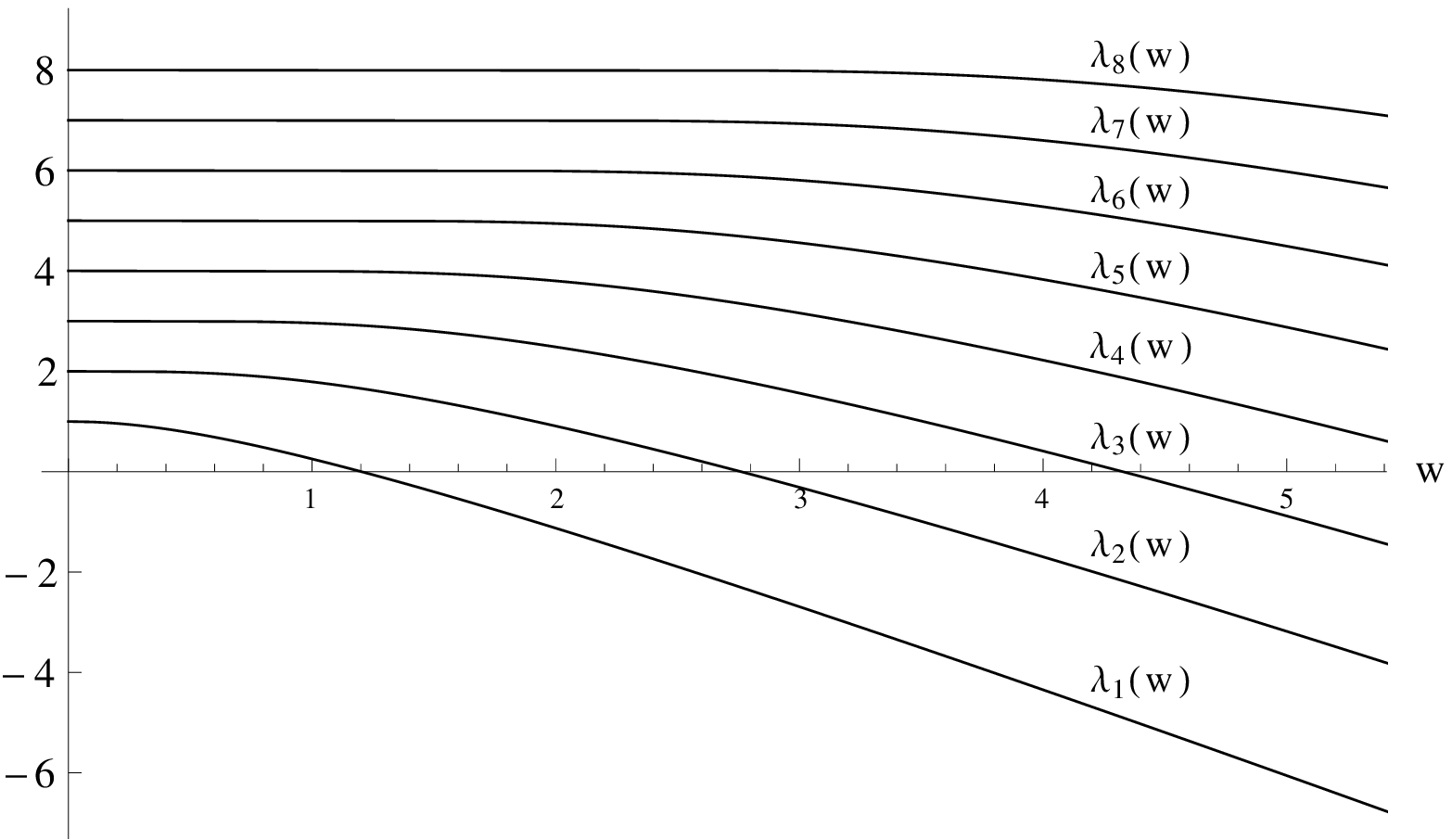}

\vspace{3\baselineskip}

\noindent Figure~1. Several first eigenvalues $\lambda_{s}(w)$
as functions of the parameter $w$ for the Jacobi operator $J=J(w)$
from Example~\ref{example:1_lindag}, with $\alpha=1$.
\end{document}